\def\be{\begin{equation}}
\def\ee{\end{equation}}
\def\ba{\begin{array}}
\def\ea{\end{array}}
\newtheorem{thm}{Theorem}[section]
\newtheorem{lem}[thm]{Lemma}
\numberwithin{equation}{section}
\newcommand{\mi}{\mathbf{i}}
\newcommand{\rmnum}[1]{\romannumeral #1}
\newcommand{\Rmnum}[1]{\expandafter\@slowromancap\romannumeral #1@}
\newtheorem{Remark}{Remark}[section]
\def\be{\begin{equation}}
\def\ee{\end{equation}}
\def\br{\begin{eqnarray}}
\def\er{\end{eqnarray}}
\def\r{\rangle}
\title{KAM for high-dimensional nonlinear quantum harmonic oscillator\footnotetext{Supported by NNSFC (Grant Nos. 12090010, 12090013, 12101227)}}
\author{
 $\mbox{Jianjun \ Liu}$\\
 $\mbox{School of Mathematics, Sichuan University}$\\
 $\mbox{Chengdu 610065, PR China}$\\
 $\mbox{Caihong \ Qi}$\\
 $\mbox{School of Mathematics, Sichuan University}$\\
 $\mbox{ Chengdu 610065, PR China}$\\
 $\mbox{Guanghua \ Shi}$\\
 $\mbox{College of Mathematics and Statistics, Hunan Normal University}$\\
 $\mbox{Changsha 410081, PR China}$}
\date{}
\begin{document}
\maketitle

\noindent \textbf{Abstract}: In this paper, we study high-dimensional nonlinear quantum harmonic oscillator equation. We show the equation admits many time quasi-periodic solutions by establishing an abstract infinite dimensional KAM theorem with multiple normal frequencies. The proof is based on the classical KAM scheme, and the key is a decaying structure of Hessian matrices of Hamiltonian functions.


\section{Introduction}
In this paper, we consider the nonlinear Schr\"{o}dinger equation
	\begin{eqnarray}\label{2024341418}
		{\mi}u_t+(-\Delta+|x|^2)u+M_{\rho}u+\varepsilon|u|^{2p}u=0,~~~x\in\mathbb{R}^d,~~~d\geq2,
	\end{eqnarray}
	where $T:=-\Delta+|x|^2$ is the $d$-dimensional quantum harmonic oscillator, $M_{\rho}$ is a Hermitian multiplier, $\varepsilon>0$ is a small parameter, and $p$ is a positive integer. Under the standard inner product on $L^2(\mathbb{R}^d)$, the equation can be written in the form
	\begin{eqnarray}
		\dot{u}={\mi}\frac{\partial H}{\partial \bar{u}}
	\end{eqnarray}
	with the Hamiltonian
	\begin{eqnarray}
		H=\int_{\mathbb{R}^d}\Big(|\nabla u|^2+x^2|u|^2+ (M_{\rho}u)\bar{u}\Big)dx+\frac{\varepsilon}{p+1}\int_{\mathbb{R}^d}|u|^{2p+2}dx.
	\end{eqnarray}
We will construct time quasi-periodic solutions by KAM theory with the multiplier $M_{\rho}$ as parameter and the nonlinearity $\varepsilon|u|^{2p}u$ as perturbation.

Historically, KAM theory for partial differential equations was originated by
Kuksin \cite{K4} and Wayne \cite{Wayne}, where one dimensional nonlinear wave and Schr\"{o}dinger equations with Dirichlet boundary conditions were studied. Then infinite dimensional KAM theory was deeply developed with applications to more partial differential equations, including both one dimension and higher dimension. For one dimensional partial differential equations, also see \cite{C-You,Gre,K5,Poschel3,P,P2,Y3} for example. For higher dimensional partial differential equations, the second order Melnikov non-resonance conditions are seriously violated. By developing further Criag and Wayne's scheme in \cite{Criag-Wayne}, Bourgain \cite{Bourgain1} made a breakthrough for two dimensional nonlinear Schr$\ddot{\mbox{o}}$dinger equations. This method only requires the first order Melnikov non-resonance conditions and is now known as Craig-Wayne-Bourgain (C-W-B) method. See \cite{Berti7,Berti4,Berti-C-P,Bourgain2,Yuan2003} for higher dimensional nonlinear Schr$\ddot{\mbox{o}}$dinger equations and nonlinear wave equations. Due to the absence of the second order Melnikov conditions, C-W-B method generally gives no information on the linear stability of the obtained invariant tori. In contrast, the classical KAM scheme gives a normal form, which usually contains the information of linear stability. Furthermore, this normal form is a cornerstone to investigate nonlinear stability around the invariant tori.

In \cite{Eliasson}, the classical KAM scheme was developed by Eliasson and Kuksin for high dimensional nonlinear Schr$\ddot{\mbox{o}}$dinger equations on $\mathbb{T}^d$. To cope with the infinitely many second order Melnikov conditions, they made a breakthrough by exploring the T$\ddot{\mbox{o}}$plitz-Lipschitz structure of Hessian matrices of Hamiltonian functions. For more results of high dimensional partial differential equations on $\mathbb{T}^d$, also see \cite{Eliasson1,GY1,GY,procesi,Yuan2021} for example. Different from $\Delta$ on $\mathbb{T}^d$, another important type is represented by $\Delta$ on $\mathbb{S}^d$ and quantum harmonic oscillator on $\mathbb{R}^d$. Between them, the first significant difference is the clustering of eigenvalues. For $\Delta$ on $\mathbb{T}^d$, a sub-clustering with upper bound was discovered in \cite{Eliasson}, which ensures the solving of homological equations without loss of regularity; for the latter, the natural clustering is given by the resonant subsets in
$$\{(j,\iota)\in \mathbb{N}^+\times\mathbb{N}^+~|~\iota=1,\cdots,d_j\},$$
where $d_j\rightarrow+\infty$ means that there is no upper bound. The second significant difference is the analysis nature of eigenfunctions. For $\Delta$ on $\mathbb{T}^d$, the eigenfunctions are $e^{\mi\langle j,x\rangle}$, $j\in\mathbb{Z}^d$, which provide a foundation for the T$\ddot{\mbox{o}}$plitz-Lipschitz structure mentioned above; for the latter, the eigenfunctions are spherical harmonic functions or Hermite functions, which do not fit in the Fourier analysis.

Regarding these difficulties, in \cite{Gre2}, Gr\'{e}bert and Paturel introduced a matrix norm for the Hessian $A:=\nabla_{\zeta}^2f$ of perturbation $f$ as follows:
\begin{eqnarray}\label{20247182203}
	|A|_{s,\beta}:=\sup_{i,j}\big\|A_i^j\big\|(ij)^{\beta}\Big(\frac{i\wedge j+|i^2-j^2|}{i\wedge j}\Big)^\frac{s}{2},
\end{eqnarray}
where $\|A_i^j\big\|$ denotes $\ell^2$-operator norm of the block $A_i^j$, $i\wedge j$ denotes $\min\{i,j\}$, and the weight $\beta>0$ characterizes the regularizing effect and is crucial to control the number of small divisors. To maintain the structure \eqref{20247182203} under KAM iteration, the gradient $\nabla_{\zeta}f$ of perturbation $f$ must also be regular, i.e.,
\begin{eqnarray}\label{20247192023}
	\nabla_{\zeta}f: Y_s\to Y_{s+\beta},
\end{eqnarray}
where $Y_s$ are Sobolev type spaces. Under \eqref{20247182203} and \eqref{20247192023}, the authors established an abstract KAM theorem for infinite dimensional Hamiltonian systems with multiple normal frequencies, and applied this theorem to the nonlinear Klein-Gordon equation on $\mathbb{S}^d$.

For high-dimensional nonlinear quantum harmonic oscillator equation \eqref{2024341418}, there is a similar structure as \eqref{20247182203} for Hessian of perturbation, i.e.,
\begin{eqnarray}\label{20240726-1}
	|A|_{s,\beta}:=\sup_{i,j}\|A_i^j\|(ij)^{\beta}\Big(\frac{\sqrt{i\wedge j}+|i-j|}{\sqrt{i\wedge j}}\Big)^s,
\end{eqnarray}
but the gradient of perturbation is merely bounded, i.e.,
\begin{eqnarray}\label{20240726-2}
	\nabla_{\zeta}f: Y_s\to Y_{s},
\end{eqnarray}
which causes an essential obstacle of maintaining the structure \eqref{20240726-1}. Precisely, consider the Poisson bracket $\{f,g\}$, where
\begin{eqnarray}\label{20240726-3}
f:=f_{\theta}(\theta)+\langle f_r(\theta),r\rangle+\langle f_{\zeta}(\theta),\zeta\rangle+\frac{1}{2}\langle f_{\zeta\zeta}(\theta)\zeta,\zeta\rangle+\text{h.o.t.}
\end{eqnarray}
is the Hamiltonian function of perturbation and
\begin{equation}\label{20247182202}
g:=g_{\theta}(\theta)+\langle g_r(\theta),r\rangle+\langle g_{\zeta}(\theta),\zeta\rangle+\frac{1}{2}\langle g_{\zeta\zeta}(\theta)\zeta,\zeta\rangle
\end{equation}
is the Hamiltonian function of transformation. By direct calculation,
\begin{eqnarray}\label{20247182201}
\nabla_{\zeta}^2\{f,g\}=f_{\zeta\zeta}\cdot Jg_{\zeta\zeta}-\nabla_{\zeta}(\partial_rf)\otimes\nabla_{\zeta}(\partial_{\theta}g)+\cdot\cdot\cdot,
\end{eqnarray}
where $J=\begin{pmatrix}
			0 & -1\\ 1 & 0
		\end{pmatrix}$
and $\otimes$ denotes the tensor product. The obstacle is the second term, i.e., $\nabla_{\zeta}(\partial_rf)\otimes\nabla_{\zeta}(\partial_{\theta}g)$, which does not possess the structure \eqref{20240726-1}. Interestingly, there is still a very nice application of \eqref{20240726-1} to the reducibility of high-dimensional quantum harmonic oscillator in \cite{Gre3}, since the second term in (\ref{20247182201}) disappears for linear equation.

In the present paper, we devote to studying the case that the Hessian $\nabla_{\zeta}^2f$ has regularizing effect but the gradient $\nabla_{\zeta}f$ is merely bounded. Our approach is to weaken the norm defined in (\ref{20240726-1}) such that the tensor product in (\ref{20247182201}) possesses a weaker structure. Precisely, we define
\begin{eqnarray}\label{20247192024}
|A|_{s,\beta}:=\sup_{i,j}\|A_i^j\|(i\wedge j)^{\beta}\Big(\frac{\sqrt{i\wedge j}+|i-j|}{\sqrt{i\wedge j}}\Big)^s
\end{eqnarray}
with $(i\wedge j)^{\beta}$ instead of the factor $(ij)^{\beta}$ in \eqref{20240726-1}. Now, to guarantee the tensor product in (\ref{20247182201}) possesses the structure (\ref{20247192024}), it just requires that one vector is regular and the other is bounded, instead of both regular. Generally in KAM scheme, the corresponding terms of perturbation $f$ and transformation $g$ have different regularities. Let's take the equation \eqref{2024341418} and the terms $\langle f_{\zeta},\zeta\rangle$, $\langle g_{\zeta},\zeta\rangle$ in \eqref{20240726-3}, \eqref{20247182202} for example. The fact is that, the former is bounded, i.e., $f_{\zeta}\in Y_{s}$, while the latter is regularized by the first order Melnikov conditions, i.e., $g_{\zeta}\in Y_{s+1}$. Therefore, we think our decaying structure may be used in a wider range.

More precisely, in order to maintain the structure (\ref{20247192024}), we also introduce a well-matched matrix norm for the Hessian $S:=\nabla_{\zeta}^2g$ of transformation $g$ as follows:
\begin{eqnarray}\label{20240726-4}
|S|_{s,\beta+}:=\sup_{i,j}\|S_i^j\|(1+|i-j|)(i\wedge j)^{\beta}\Big(\frac{\sqrt{i\wedge j}+|i-j|}{\sqrt{i\wedge j}}\Big)^s.
\end{eqnarray}
Respectively denote $\mathcal{M}_{s,\beta}$ and $\mathcal{M}^+_{s,\beta}$ the spaces of matrices with the norm in \eqref{20247192024} and \eqref{20240726-4}. Formally the same as \cite{Gre2,Gre3}, we need to prove: if $A\in\mathcal{M}_{s,\beta}$ and $S\in\mathcal{M}^+_{s,\beta}$, then $AS,SA\in\mathcal{M}_{s,\beta}$; if $S_1,S_2\in\mathcal{M}^+_{s,\beta}$, then $S_1S_2\in\mathcal{M}^+_{s,\beta}$. Compared with the factor $(ij)^{\beta}$ in \cite{Gre2,Gre3}, the conclusions for $(i\wedge j)^{\beta}$ are less obvious and the proof is harder.

Except for the trouble caused by $(i\wedge j)^{\beta}$ above, there is another tricky problem. By (\ref{20247182202}), 
\begin{eqnarray}\label{20240726-5}
\nabla_{\zeta}g=g_{\zeta}+g_{\zeta\zeta}\zeta,
\end{eqnarray}
where the regularity of the first term $g_{\zeta}$ has been discussed above, and the second term $g_{\zeta\zeta}\zeta$ is left as a problem. For the factor $(ij)^{\beta}$, this problem is solved in \cite{Gre2,Gre3} by the following conclusion:
if $S\in\mathcal{M}_{s,\beta}^+$, then $S\in\mathcal{L}(Y_s,Y_{s+\beta})$. However, for the weaker factor $(i\wedge j)^{\beta}$, the conclusion is no longer valid. Fortunately, we find a weaker conclusion: if $S\in\mathcal{M}_{s,\beta}^+$, then $S\in\mathcal{L}(Y_s,Y_{s+\beta'})$ for any $0<\beta'<\beta$. Of course, this leads to $\nabla_{\zeta}^2\{f,g\}$ belongs to $\mathcal{M}_{s,\beta'}$ instead of $\mathcal{M}_{s,\beta}$, which means that the perturbation after each KAM step will lose some regularizing effect. Thus, we take $\beta$ as an iteration parameter and shrink it in the KAM procedure.

Finally, we also mention KAM theory for partial differential equations with their nonlinearity containing spatial derivatives such as KdV, derivative nonlinear wave and Schr$\ddot{\mbox{o}}$dinger equations, water wave equations, seeing \cite{Baldi4,Baldi2,Berti2,Berti-H-M,kappeler,Kuksin1,L-Y1} for example. 

In the present paper, for convenience, we keep fidelity with the notation and terminology from \cite{Gre2,Gre3}. Now we lay out an outline:
In Section 2, we first state an abstract infinite dimensional KAM theorem, seeing Theorem \ref{2024371454}, and then apply this theorem to the high-dimensional nonlinear quantum harmonic oscillator equation, seeing Theorem \ref{20240726-6}.
In Section 3, we prove several basic properties of the matrix spaces $\mathcal{M}_{s,\beta}$ and $\mathcal{M}_{s\,\beta}^+$. These properties are crucial to our main results and have been discussed above.
In Section 4, we first study Poisson brackets of two Hamiltonian functions, seeing Lemma \ref{20243212055}. Since the space $\mathcal{T}_{\sigma,\mu,\mathcal{D}}^{s,\beta}$ is not closed under the Poisson bracket, we introduce its subspace $\mathcal{T}_{\sigma,\mu,\mathcal{D}}^{s,\beta+}$. Then, we give a precise description of the Hamiltonian flow and some useful estimates of the transformation, seeing Lemma \ref{20243181426}. Notice that this lemma takes into account the change of the parameter $\beta$. Finally, we prove that the Hamiltonian flow preserves the regularizing effect in the sense of smaller $\beta$, seeing Lemma \ref{20243212033}.
In Section 5-7, the KAM theorem is proved. In section 5, we perform one KAM step. In section 6, we prove the iterative lemma, seeing Lemma \ref{20246101129}, where a new iterative parameter $\beta_m$ is introduced. In section 7, we estimate the measure of excluded parameters, seeing Lemma \ref{20240726-7}.
In Appendix, we give some useful lemmas. Lemma \ref{20243122043} describes a property of the weight $w(i,j)$. Lemma \ref{20243122017} provides two elementary inequalities being frequently used in Section 3. We emphasize that, when using this lemma to the proof of Lemma \ref{20243141530-3}, the parameter $\beta$ in \eqref{20240527-1} is replaced by $\beta-\beta'$, which leads to the large coefficient $\frac{1}{\beta-\beta'}$ in (\ref{2024581524}). Lemma \ref{20240527-2} shows that the operator norm can be controlled by the Frobenius norm. Lemma \ref{20240527-4} is the Schur inequality. Lemma \ref{20246251433} is quoted from \cite{Gre3} and is used to solve the homological equation (\ref{20245262206}). Lemma \ref{20243212004} is quoted from \cite{Gre2} and is devoted to estimating the new perturbation $f_{3+}$ in \eqref{2024672221}. Lemma \ref{20247212055} is quoted from \cite{Gre3} and is used to check the Hessian of perturbation $f$ satisfies the norm \eqref{20240511-1}.

\section{Main results}
In this section, we give an abstract KAM theorem with multiple normal frequencies and its application to high-dimensional nonlinear Schr\"{o}dinger equation with harmonic potential.
\subsection{An abstract KAM theorem}	
At beginning, we introduce some notations. Denote the index set
\begin{eqnarray}\label{20240721-11}
		\mathfrak{L}:=\{(j,\iota)\in \mathbb{N}^+\times\mathbb{N}^+~|~\iota=1,\cdots,d_j\},
	\end{eqnarray}
where $d_j\leq c^*j^{d^*}$ for some constants $c^*>0$ and $d^*>0$. For a complex vector valued sequence $$\zeta=(\zeta_{j,\iota}\in\mathbb{C}^2)_{(j,\iota)\in\mathfrak{L}},$$
we also write $\zeta=(\zeta_j)_{j\geq1}$ with $\zeta_j=(\zeta_{j,\iota})_{1\leq\iota\leq d_j}$. Denote the $\ell^2$-norm
$|\zeta_j|:=\sqrt{\sum_{1\leq\iota\leq d_j}|\zeta_{j,\iota}|^2}$, where $|\zeta_{j,\iota}|=\sqrt{|p_{j,\iota}|^2+|q_{j,\iota}|^2}$ for $\zeta_{j,\iota}=(p_{j,\iota},q_{j,\iota})$. Let $s\geq0$ and define the space $Y_s$ of complex vector valued sequences $\zeta$ with the norm
	\begin{eqnarray}		\|\zeta\|_s^2:=\sum_{j\geq1}|\zeta_j|^2j^{2s}=\sum_{(j,\iota)\in\mathfrak{L}}|\zeta_{j,\iota}|^2j^{2s}=\sum_{(j,\iota)\in\mathfrak{L}}(|p_{j,\iota}|^2+|q_{j,\iota}|^2)j^{2s}<+\infty.
	\end{eqnarray}

Denote by $\mathrm{M}_{m\times n}(\mathbb{C})$ the set of complex $m\times n$ matrices. For an infinite-dimensional $\mathrm{M}_{2\times 2}(\mathbb{C})$-valued matrix
$$A:~\mathfrak{L}\times\mathfrak{L}\to \mathrm{M}_{2\times 2}(\mathbb{C}),~~~~~~\big((i,\iota_1),(j,\iota_2)\big)\mapsto A_{(i,\iota_1)}^{(j,\iota_2)},$$
we write its block wise representation $A=(A_i^j)_{i,j\geq1}$ with $A_i^j\in\mathrm{M}_{2d_i\times 2d_j}(\mathbb{C})$. For any $i,j\geq1$, denote $i{\wedge}j=\min\{i,j\}$ and
\begin{equation}\label{20240509-1}		
w(i,j)=\frac{\sqrt{i{\wedge}j}+|i-j|}{\sqrt{i{\wedge}j}}.
\end{equation}
Fix $\beta>0$ and define the space $\mathcal{M}_{s,\beta}$ of infinite matrices $A$ with the norm
	\begin{eqnarray}\label{20240511-1}		|A|_{s,\beta}:=\sup_{i,j\geq1}(i{\wedge}j)^{\beta}w(i,j)^s\big\|A_i^j\big\|<+\infty,
	\end{eqnarray}
where $\|A_i^j\big\|$ denotes the $\ell^2$-operator norm from $\mathbb{C}^{2d_j}$ to $\mathbb{C}^{2d_i}$.

For $\sigma>0$, denote
	\begin{eqnarray*}
		\mathbb{T}_{\sigma}^n=\{z\in \mathbb{C}^n:|\Im z|<\sigma\}/ {2\pi\mathbb{Z}^n};
	\end{eqnarray*}
for $\sigma,\mu>0$ and $s\geq 0$, denote
	\begin{eqnarray*}
		\mathcal{O}^s(\sigma,\mu)=\mathbb{T}_{\sigma}^n\times\{r\in \mathbb{C}^n:|r|<\mu^2\}\times\{\zeta\in Y_s:\|\zeta\|_s<\mu\},
	\end{eqnarray*}
where $|\Im z|$ and $|r|$ are maximum norm of $n$-dimensional vectors. Let $\mathcal{D}\subset \mathbb{R}^n$ be a compact set of positive Lebesgue measure. This is the set of parameters upon which will depend our objects. Differentiability of functions on $\mathcal{D}$ is understood in the sense of Whitney.

Let $f:\mathcal{O}^s(\sigma,\mu)\times\mathcal{D}\to \mathbb{C}$ be a function, real holomorphic in $(\theta,r,\zeta)\in\mathcal{O}^s(\sigma,\mu)$ and $C^1$ in $\rho\in \mathcal{D}$, such that for all $\rho\in \mathcal{D}$,
	\begin{eqnarray*}
		\mathcal{O}^s(\sigma,\mu)\ni (\theta,r,\zeta)\mapsto \nabla_{\zeta}f(\theta,r,\zeta;\rho)\in Y_s
	\end{eqnarray*}
	and
	\begin{eqnarray*}
		\mathcal{O}^s(\sigma,\mu)\ni (\theta,r,\zeta)\mapsto \nabla_{\zeta}^2f(\theta,r,\zeta;\rho)\in \mathcal{M}_{s,\beta}
	\end{eqnarray*}
are real holomorphic functions. Denote this set of functions by $\mathcal{T}^{s,\beta}(\sigma,\mu,\mathcal{D})$ and define the norm $[f]_{\sigma,\mu,\mathcal{D}}^{s,\beta}$ through
	\begin{eqnarray}\label{20240605-1}
		\sup_{\nu=0,1,\rho\in \mathcal{D}\atop (\theta,r,\zeta)\in\mathcal{O}^s(\sigma,\mu)}\max\big\{|\partial_{\rho}^{\nu}f(\theta,r,\zeta;\rho)|,\mu\|\partial_{\rho}^{\nu}\nabla_{\zeta}f(\theta,r,\zeta;\rho)\|_s,
\mu^2|\partial_{\rho}^{\nu}\nabla_{\zeta}^2f(\theta,r,\zeta;\rho)|_{s,\beta}\big\}.
	\end{eqnarray}
We remark that for $\beta>0$, only the Hessian of $f\in \mathcal{T}^{s,\beta}$ has a regularizing effect, so does not the gradient of $f$.

Introduce the orthogonal projection $\Pi$ defined on the $2\times2$ complex matrices
	\begin{eqnarray*}
		\Pi~:\mathrm{M}_{2\times 2}(\mathbb{C})\to \mathbb{C}I+\mathbb{C}J,
	\end{eqnarray*}
	where $I=	\begin{pmatrix}
			1 & 0\\ 0 & 1
		\end{pmatrix}$
    and $J=	\begin{pmatrix}
			0 & -1\\ 1 & 0
		\end{pmatrix}$.
		A matrix $A:~\mathfrak{L}\times\mathfrak{L}\to \mathrm{M}_{2\times 2}(\mathbb{C})$ is called on normal form and denoted by $A\in\mathcal{NF}$ if it is real valued, symmetric, i.e., $A={}^tA$, block diagonal, i.e., $A_i^j=0$ for $i\neq j$, and satisfies $\Pi A=A$.

Introduce the symplectic structure
	\begin{eqnarray*}
		\sum_{b=1}^{n}dr_b\wedge d\theta_b+\sum_{(j,\iota)\in\mathfrak{L}}dp_{j,\iota}\wedge dq_{j,\iota},
	\end{eqnarray*}
and a Hamiltonian is called on normal form  if it reads
	\begin{eqnarray*}
		h=\langle \omega(\rho),r\rangle+\frac{1}{2}\langle \zeta,A(\rho)\zeta\rangle,
	\end{eqnarray*}
	where $\omega\in \mathbb{R}^n$ is the tangential frequency,  $A$ is a matrix on normal form, and $\rho\in\mathcal{D}$ is an external parameter.

	\begin{thm}\label{2024371454}
Consider the perturbed Hamiltonian system
	\begin{eqnarray}\label{20246302006}
		\begin{aligned}				h+f=\langle\omega(\rho),r\rangle+\frac{1}{2}\langle\zeta,A(\rho)\zeta\rangle+f(\theta,r,\zeta;\rho)
		\end{aligned}	
	\end{eqnarray}
with the normal form $h$ and the perturbation $f$ satisfying the following assumptions:

		\textbf{(A1)} Both the map $\rho\mapsto \omega(\rho)$ and its inverse map are $C^1$ between $\mathcal{D}$ and its image. More precisely, there exist positive constants $M_{\omega}$ and $L$, such that
\begin{eqnarray}\label{20240709-1}
&&\sup_{\nu=0,1,\rho\in\mathcal{D}}|\partial_{\rho}^{\nu}\omega(\rho)|\leq M_{\omega},\\
&&\sup_{\rho\in\omega(\mathcal{D})}|\partial_{\rho}\omega^{-1}(\rho)|\leq L.\label{20240709-2}
\end{eqnarray}

			\textbf{(A2)} The normal form matrix $A(\rho)$ is block diagonal with its components
\begin{eqnarray}\label{20240709-5}
A_j^j(\rho)=\lambda_jI+\tilde{A}_j^j(\rho),~~~~~~\lambda_j=b_1j+b_0,
\end{eqnarray}
where $b_1, b_0$ are constants satisfying $b_1>0$ and $b_1+b_0>0$, $I$ is the unit matrix of the same order as $A_j^j$, and $\tilde{A}_j^j(\rho)$ is $C^1$ in the parameter $\rho\in\mathcal{D}$. Moreover, there exist positive constants $M_{\Omega}$ and $\tilde{\beta}$, such that for any $j\geq 1$,
			\begin{eqnarray}\label{20240709-3} \sup_{\nu=0,1,\rho\in\mathcal{D}}\|\partial_{\rho}^{\nu}\tilde{A}_j^j(\rho)\|\leq \frac{M_{\Omega}}{j^{\tilde{\beta}}}.
			\end{eqnarray}
Additionally we assume
			\begin{eqnarray}\label{20240721-3}
M_{\Omega}<\min\{\frac{b_1}{8},\frac{1}{12L}\}.
			\end{eqnarray}

		\textbf{(A3)} The perturbation $f$ is real analytic in the space coordinates $\mathcal{O}^s(\sigma,\mu)$ and $C^1$ in the parameter $\rho\in\mathcal{D}$, where $0<\sigma,\mu\leq1$. Moreover, there exist positive constants $s,\beta$ (without loss of generality, assume $\beta\leq\tilde{\beta}$) with $0<\beta\leq\min\{\frac{s}{2},\frac{1}{2}\}$ such that $f\in\mathcal{T}_{\sigma,\mu,\mathcal{D}}^{s,\beta}$.

Then there exists $\varepsilon^*>0$ depending on $n$, $s$, $b_0$, $b_1$, $c^*$, $d^*$, $M_{\omega}$, $L$, $M_{\Omega}$, $\beta$, $\sigma$, $\mu$ such that if
		\begin{eqnarray}
			[f]_{\sigma,\mu,\mathcal{D}}^{s,\beta}=\varepsilon<\varepsilon^*,
		\end{eqnarray}
then there exist:

(1) a Cantor set $\mathcal{D}_*\subset\mathcal{D}$ with
\begin{eqnarray}\label{20240721-4}
\text{meas}(\mathcal{D}\setminus\mathcal{D}_*)\leq\varepsilon^{\frac{1}{\alpha}},
\end{eqnarray}
where
\begin{eqnarray}\label{20240721-5}
\alpha=32(1+\frac{d^*}{\beta})(1+\frac{4d^*+2}{\beta});
\end{eqnarray}

(2) a family of real analytic symplectic diffeomorphism
$\Phi:\mathcal{O}^s(\sigma/2,\mu/2)\times\mathcal{D}_*\to\mathcal{O}^s(\sigma,\mu)$
satisfying
\begin{eqnarray}\label{20240721-6}
\|\Phi-id\|_s\leq\varepsilon^{\frac{1}{2}},
\end{eqnarray}
such that
		\begin{eqnarray}
			(h+f)\circ \Phi=\langle \omega_*(\rho),r\rangle+\frac{1}{2}\langle \zeta,A_*(\rho)\zeta\rangle+{f}_*(\theta,r,\zeta;\rho),
		\end{eqnarray}
where the norm $\|(\theta,r,\zeta)\|_s=\max\{|\theta|,|r|,\|\zeta\|_s\}$ for $(\theta,r,\zeta)\in\mathbb{C}^n\times \mathbb{C}^n\times Y_s$, the new tangential frequency  $\omega_*$ satisfies
\begin{eqnarray}\label{20240721-7}
\sup_{\nu=0,1,\rho\in\mathcal{D}_*}|\partial_{\rho}^{\nu}(\omega_*(\rho)-\omega(\rho))|
\leq\frac{2\varepsilon}{\mu^2},
\end{eqnarray}
the new normal form matrix $A_*$ satisfies
\begin{eqnarray}\label{20240721-8}
\sup_{\nu=0,1,\rho\in\mathcal{D}_*}\|\partial_{\rho}^{\nu}(A_*(\rho)-A(\rho))_j^j\|\leq \frac{2\varepsilon}{\mu^2} j^{-\frac{\beta}{2}},
\end{eqnarray}
and the new perturbation $f_*$ satisfies $\partial_r{f_*}=\partial_{\zeta}{f_*}=\partial_{\zeta}^2{f_*}=0$ for $r=\zeta=0$.

As a result, for each $\rho\in\mathcal{D}_*$,the map $\Phi$ restricted to $\mathbb{T}^n\times\{0\}\times\{0\}$ is an analytic embedding of a rotational torus with frequencies $\omega_*(\rho)$ for the perturbed Hamiltonian $h+f$ at $\rho$. In other words,
$$t\mapsto\Phi\big(\theta+t\omega_*(\rho),0,0;\rho\big),\quad t\in\mathbb{R}$$
is an analytic quasi-periodic solution for the Hamiltonian $h+f$ evaluated at $\rho$ for every $\theta\in\mathbb{T}^n$ and $\rho\in\mathcal{D}_*$.
\end{thm}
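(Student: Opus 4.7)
The plan is to prove Theorem~\ref{2024371454} by the classical KAM scheme: construct a sequence of symplectic changes of variable $\Phi_m$, each the time-$1$ flow of a small auxiliary Hamiltonian $g_m$, so that the infinite composition $\Phi = \lim_{m\to\infty} \Phi_{g_1} \circ \cdots \circ \Phi_{g_m}$ conjugates $h+f$ into a normal form plus a remainder $f_*$ which vanishes to order $\geq 3$ at $r=\zeta=0$. At each step the perturbation must shrink quadratically while the analytic and parameter domains shrink only geometrically; the novelty here is that the regularity index $\beta$ itself must be treated as an iteration parameter and decreased by a summable amount at each step, in order to accommodate the Poisson-bracket obstruction (\ref{20247182201}) that the weaker weight $(i\wedge j)^{\beta}$ of (\ref{20247192024}) is designed to survive.

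For one KAM step (Section~5), starting from $H=h+f$ with $[f]_{\sigma,\mu,\mathcal{D}}^{s,\beta}=\varepsilon$, I would truncate $f$ at Fourier order $|k|\leq K$ in $\theta$ and at degree $2$ in $(r,\zeta)$, writing $f^T$ as in (\ref{20240726-3}), and solve the homological equation
\begin{eqnarray*}
\{h,g\}+f^T=\langle\hat\omega(\rho),r\rangle+\tfrac12\langle\zeta,\hat A(\rho)\zeta\rangle
\end{eqnarray*}
whose right-hand side absorbs the resonant part and updates the normal form. This splits by $\theta$-Fourier mode and by $(r,\zeta)$-degree into four linear equations; the only nonroutine piece, that for $g_{\zeta\zeta}$, is solved after restricting $\rho$ via Lemma~\ref{20246251433}, and its solution must additionally lie in the stronger space $\mathcal{M}_{s,\beta}^{+}$ of (\ref{20240726-4}) so that the product lemmas of Section~3 can absorb the tensor term in (\ref{20247182201}) without destroying the block decay. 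The flow $\Phi_g$ and its derivative are controlled by Lemma~\ref{20243181426}, and the new perturbation $f_{+}=(h+f)\circ\Phi_g-h_{+}$, expanded along the flow via Lemma~\ref{20243212033}, satisfies
\begin{eqnarray*}
[f_{+}]_{\sigma_{+},\mu_{+},\mathcal{D}_{+}}^{s,\beta'}\;\lesssim\;\frac{1}{(\beta-\beta')^{c}}\left(\frac{\varepsilon}{\mu^{2}}\right)^{2}
\end{eqnarray*}
at the price of a loss $\beta\to\beta'<\beta$ in the Hessian weight.

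To iterate (Section~6, Lemma~\ref{20246101129}) I would pick a super-exponential sequence $\varepsilon_{m}\sim\varepsilon^{(3/2)^{m}}$ together with geometric $\sigma_{m}\searrow\sigma/2$, $\mu_{m}\searrow\mu/2$ and a summable decrease $\beta_{m}\searrow\beta_{\infty}>\beta/2$, for instance $\beta_{m}=\beta(\tfrac12+2^{-m-1})$, so that the diverging factor $(\beta_{m}-\beta_{m+1})^{-c}$ is absorbed by $\varepsilon_{m}^{1/2}$. Convergence of $\Phi_{m}=\Phi_{g_1}\circ\cdots\circ\Phi_{g_m}$ to a symplectic embedding $\Phi:\mathcal{O}^{s}(\sigma/2,\mu/2)\times\mathcal{D}_{*}\to\mathcal{O}^{s}(\sigma,\mu)$ satisfying (\ref{20240721-6}) then follows by a telescoping estimate, and (\ref{20240721-7}), (\ref{20240721-8}) are obtained by summing the step-wise corrections to $\omega$ and $A$; the decay $j^{-\beta/2}$ in (\ref{20240721-8}) appears because the correction $\hat A_{j}^{j}$ at scale $m$ is read off the $\mathcal{M}_{s,\beta_{m}}$-norm and then summed with $\beta_{m}\geq\beta/2$.

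The main obstacle is the measure estimate (\ref{20240721-4}), to be proved as Lemma~\ref{20240726-7}. At each stage I must discard the $\rho$ for which a first- or second-order Melnikov condition
\begin{eqnarray*}
|\langle k,\omega_{m}(\rho)\rangle+\lambda|<\gamma_{m},\qquad\lambda\in\mathrm{spec}\big({\pm}A_{m}(\rho)_{i}^{i}{\pm}A_{m}(\rho)_{j}^{j}\big),
\end{eqnarray*}
fails. The twist assumption (\ref{20240709-2}) from (A1) allows a change of variable to $\omega$-coordinates and a Fubini slicing, giving a bound of order $\gamma_{m}/|k|$ for each such set. Assumption~(A2), combined with the smallness (\ref{20240721-3}) and the block decay (\ref{20240709-3}), keeps $\mathrm{spec}(A_{j}^{j}(\rho))$ within $O(j^{-\tilde\beta})$ of $b_{1}j+b_{0}$, so the linear growth $\lambda_{j}=b_{1}j+b_{0}$ automatically separates distant blocks and only pairs $(i,j)$ with $|i-j|$ bounded by a fixed power of $|k|$ need testing; the polynomial multiplicity $d_{j}\leq c^{*}j^{d^{*}}$ then controls how many normal-form modes are resonant at each level. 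Summing over all stages $m$ and admissible $(k,i,j)$, and balancing the exclusions against the shrinking $\beta_{m}$ and $\gamma_{m}$, produces exactly the exponent $\alpha$ in (\ref{20240721-5}). Managing the combinatorics of second-order conditions simultaneously with the forced decrease of $\beta$ is the delicate point where the weaker weight $(i\wedge j)^{\beta}$ really forces a new argument compared to \cite{Gre2,Gre3}.
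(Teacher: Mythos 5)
Your overall architecture matches the paper exactly: truncation of $f$, solution of the four homological equations with the new normal-form shift, flow estimates with a loss $\beta\to\beta'$ in the Hessian weight, iteration with $\beta_m=\tfrac{\beta_0}{2}(1+2^{-m})$ (your formula $\beta(\tfrac12+2^{-m-1})$ is the same sequence), and a super-exponential decay of the rescaled size $\varepsilon_m/\mu_m^2$. The specific exponent $5/4$ used in the paper (your $3/2$) is immaterial: both choices work provided the factors $E_m\sim(\beta_m-\beta_{m+1})^{-1}(\sigma_m-\sigma_{m+1})^{-\alpha(n+d^*+2)}$ are absorbed quadratically.

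There is, however, one place where your sketch is not just vague but, as written, would fail: the measure estimate for the second-order Melnikov sets $\mathfrak{R}_k^{m,11}$. You argue that restricting to $|i-j|\lesssim |k|$ and then using $d_j\leq c^*j^{d^*}$ ``controls how many normal-form modes are resonant'', which reads as summing over all admissible $(i,j,\iota_1,\iota_2)$. That sum diverges: for fixed $k$ there are infinitely many diagonals $|i-j|\lesssim |k|$, and the multiplicity $d_id_j$ grows like $(ij)^{d^*}$. The step you are missing is the paper's split of the index set according to the size of $\min\{i,j\}$ against a threshold $J_m\sim(\kappa_m^{-2\alpha^*})^{1/\beta_m}$. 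For $\min\{i,j\}\geq J_m$ the eigenvalue corrections $\Omega_{m,j,\iota}-\lambda_j=O(j^{-\beta_m})$ are smaller than the cushion $\kappa_m^{2\alpha^*}$, so the resonance set $\mathfrak{R}_{k,i,j,\iota_1,\iota_2}^{m,11}$ collapses into a single set $\bar{\mathfrak{R}}_{k,i,j}^m(2\kappa_m^{2\alpha^*})$ that depends only on the pair $(i,j)$ and not on $(\iota_1,\iota_2)$; the union of these over all pairs is controlled by Lemma \ref{20246152222} (the unperturbed, linear spectrum). Only for $\min\{i,j\}<J_m$ do you actually enumerate the finitely many quadruples and multiply by the per-quadruple measure. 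Without this dichotomy, the multiplicity $d_j$ ruins the count, and the exponent $\alpha$ in \eqref{20240721-5} cannot be extracted. Everything else in the proposal is consistent with the paper's proof.
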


\begin{Remark}
	We remark that $\alpha$ in (\ref{20240721-5}) is not optimal. However, it could tell us that the digged set $\mathcal{D}\setminus\mathcal{D}_*$ will become larger if $d^*$ becomes larger or $\beta$ becomes smaller.
\end{Remark}

\subsection{Application to high-dimensional nonlinear quantum harmonic oscillator}

In this subsection, we shall apply Theorem \ref{2024371454} to the equation \eqref{2024341418}. The operator $T$ has eigenvalues $\{\lambda_j\}$ satisfying
	\begin{eqnarray*}
		\lambda_{j}=2j-2+d,~~~j\geq 1.
	\end{eqnarray*}
Let $X_j$ be the associated eigenspace, whose dimension denoted by $\tilde{d}_j$ is less than $j^{d-1}$. We denote by $\Psi_{j,\iota}$ the Hermitian function of degree $j$ and order $\iota$ so that we have
\begin{eqnarray*}
	X_j=\text{Span}\{\Psi_{j,\iota},~\iota=1,\cdots,\tilde{d}_j\}.
\end{eqnarray*}
Denoting
\begin{eqnarray*}
	\mathfrak{L}_0:=\{(j,\iota)~|~j\geq 1,~\iota=1,\cdots,\tilde{d}_j\},
\end{eqnarray*}
then $\{\Psi_{j,\iota}\}_{(j,\iota)\in\mathfrak{L}_0}$ forms\ a basis of $L^2(\mathbb{R}^d)$. Define the Hermitian multiplier on the basis as
  \begin{eqnarray*}
  	M_{\rho}\Psi_{j,\iota}=\rho_{j,\iota}\Psi_{j,\iota}~~~~~~\text{for}~~ (j,\iota)\in\mathfrak{L}_0,
  \end{eqnarray*}
  where $(\rho_{j,\iota})_{(j,\iota)\in\mathfrak{L}_0}$ is a bounded sequence of real numbers.
Writing $u$ as
  \begin{eqnarray*}
  	u=\sum_{(j,\iota)\in\mathfrak{L}_0}\xi_{j,\iota}\Psi_{j,\iota}(x),
  \end{eqnarray*}
then the equation \eqref{2024341418} turns into
	\begin{eqnarray}\label{20246291645}
			\dot{\xi}_{j,\iota}={\mi}\frac{\partial H}{\partial\bar{\xi}_{j,\iota}}
	\end{eqnarray}
	with the Hamiltonian function
	\begin{eqnarray}	H=\sum_{(j,\iota)\in\mathfrak{L}_0}(\lambda_{j,\iota}+\rho_{j,\iota})\xi_{j,\iota}\bar{\xi}_{j,\iota}+\frac{\varepsilon}{p+1}\int_{\mathbb{R}^d}\Big|\sum_{(j,\iota)\in\mathfrak{L}_0}\xi_{j,\iota}\Psi_{j,\iota}(x)\Big|^{2p+2}dx.
	\end{eqnarray}
Introduce the real variables $\zeta_{j,\iota}=\begin{pmatrix}p_{j,\iota}\\q_{j,\iota} \end{pmatrix}$ by letting $\xi_{j,\iota}=\frac{1}{\sqrt{2}}(p_{j,\iota}+{\mi}q_{j,\iota})$. Then  \eqref{20246291645} becomes
	\begin{eqnarray}
		\begin{cases}
			\dot{p}_{j,\iota}=\frac{-\partial H}{\partial q_{j,\iota}}\\\dot{q}_{j,\iota}=\frac{\partial H}{\partial p_{j,\iota}}
		\end{cases}
	\end{eqnarray}
	with
	\begin{eqnarray}\label{20246291705} H=\frac{1}{2}\sum_{(j,\iota)\in\mathfrak{L}_0}(\lambda_{j,\iota}+\rho_{j,\iota})(p_{j,\iota}^2+q_{j,\iota}^2)+\frac{\varepsilon}{p+1}\int_{\mathbb{R}^d}\Big|\sum_{(j,\iota)\in\mathfrak{L}_0}\frac{p_{j,\iota}+{\mi}q_{j,\iota}}{\sqrt{2}}\Psi_{j,\iota}(x)\Big|^{2p+2}dx.
	\end{eqnarray}
	Choose $\mathcal{A}:=\{(j_1,\iota_1),(j_2,\iota_2),\cdots,(j_n,\iota_n)\}\subset \mathfrak{L}_0$ as tangential sites and $\mathfrak{L}:=\mathfrak{L}_0\backslash \mathcal{A}$ as normal sites. The index set $\mathfrak{L}$ is of the form \eqref{20240721-11} with $d_j\leq j^{d-1}$, that is, $c^*=1$, $d^*=d-1$. We assume
	\begin{eqnarray}\label{20240723-1}
		\begin{cases}	\rho_b=\rho_{j_b,\iota_b}\in[0,1],~~~b=1,2,\cdots,n\\\rho_{j,\iota}=0,~~~(j,\iota)\in\mathfrak{L}
		\end{cases}
	\end{eqnarray}
	and take $\rho:=(\rho_1,\cdots,\rho_n)\in[0,1]^n:=\mathcal{D}$ as KAM parameters. Fix $I_b>0$ and introduce the action-angle variables $(\theta_b,r_b)$ for $(j_b,\iota_b)\in\mathcal{A}$:
	\begin{eqnarray*} p_{j_b,\iota_b}=\sqrt{2(I_b+r_b)}\cos(\theta_b),~~~q_{j_b,\iota_b}=\sqrt{2(I_b+r_b)}\sin(\theta_b).
	\end{eqnarray*}
	Then the Hamiltonian \eqref{20246291705} becomes $H=h+f$ with
	\begin{eqnarray}\label{20246291942}
		\begin{aligned}	&h=\langle\omega(\rho),r\rangle+\frac{1}{2}\langle\zeta,A\zeta\rangle,\\
&f=\frac{\varepsilon}{p+1}\int_{\mathbb{R}^d}\Big|\sum_{b=1}^n\sqrt{I_b+r_b}e^{\mi\theta_b}\Psi_{j_b,\iota_b}(x)+\sum_{(j,\iota)\in\mathfrak{L}}\frac{p_{j,\iota}+\mi q_{j,\iota}}{\sqrt{2}}\Psi_{j,\iota}(x)\Big|^{2p+2}dx,
		\end{aligned}
	\end{eqnarray}
	where $\zeta=(\zeta_{j,\iota})_{(j,\iota)\in\mathfrak{L}}$ and
	\begin{eqnarray}\label{20246291937} &&\omega(\rho)=(\omega_1(\rho),\cdots,\omega_n(\rho)),~~~~~~\omega_b(\rho)=2j_b-2+d+\rho_b,\\
&&A=\text{diag}(A_j^j,j\geq1),~~~~~~A_j^j=(2j-2+d)I_{2d_j\times2d_j}.\label{20246291938}
	\end{eqnarray}

In view of \eqref{20246291937}, the tangential frequency $\omega$ satisfies the assumption \textbf{(A1)} with
$$M_{\omega}=\max_{1\leq b\leq n}2j_b+d-1,~~~~~~L=1.$$
In view of \eqref{20246291938}, we know $b_1=2$, $b_0=d-2$ and $\tilde{A}_j^j(\rho)=0$, which implies that we can choose $M_{\Omega}=\frac{1}{100}$ and $\tilde{\beta}=1$ such that \eqref{20240709-3} and \eqref{20240721-3} in the assumption \textbf{(A2)} hold. It remains to check the perturbation $f$ in \eqref{20246291942} satisfies the assumption \textbf{(A3)}. Before that, introduce the space
	\begin{eqnarray*}	\mathcal{H}^s=\Big\{u(x)=\sum_{(j,\iota)\in\mathfrak{L}_0}\xi_{j,\iota}\Psi_{j,\iota}(x)~\big|~\sum_{(j,\iota)\in\mathfrak{L}_0}|\xi_{j,\iota}|^{2}j^{2s}<+\infty\Big\},
	\end{eqnarray*}
with the norm
 $\|u\|_{\mathcal{H}^s}^2:=\sum_{(j,\iota)\in\mathfrak{L}_0}|\xi_{j,\iota}|^2j^{2s}$.
	Clearly,
	\begin{eqnarray*}
		\mathcal{H}^s=D(T^{s}):= \{u\in L^2(\mathbb{R}^d):~T^{s}u\in L^2(\mathbb{R}^d)\},
	\end{eqnarray*}
Fix
$$\sigma=\frac{1}{2},~~~\mu=\frac{1}{2}\min\{1,I_1,\cdots,I_n\},~~~s>\max\{d-2,\frac{d}{4}\}.$$
The fact $(\theta,r,\zeta)\in\mathcal{O}^{s}(\sigma,\mu)$ implies $u\in\mathcal{H}^{s}$. Then $|u|^{2p+2}$ belongs to $\mathcal{H}^{s}$ since $\mathcal{H}^{s}$ is an algebra. Thus the perturbation $f$ is analytic in $\mathcal{O}^{s}(\sigma,\mu)$ with
		\begin{eqnarray}\label{2024721703}
\sup_{(\theta,r,\zeta)\in\mathcal{O}^{s}(\sigma,\mu)}|f(\theta,r,\zeta)|\leq C\varepsilon,
		\end{eqnarray}
		where $C$ is a positive constant depending on $s,p,d$, $\{j_b\}_{1\leq b\leq n}$, $\{I_b\}_{1\leq b\leq n}$. Furthermore, one has
		\begin{eqnarray*}
			\frac{\partial f}{\partial p_{j,\iota}}=\frac{\varepsilon}{\sqrt{2}}\Big(\int_{\mathbb{R}^d}u^p\bar{u}^{p+1}\Psi_{j,\iota}(x)dx+\int_{\mathbb{R}^d}u^{p+1}\bar{u}^{p}\Psi_{j,\iota}(x)dx\Big)
		\end{eqnarray*}
		and
		\begin{eqnarray*}
			\frac{\partial f}{\partial q_{j,\iota}}=\frac{\varepsilon{\mi}}{\sqrt{2}}\Big(\int_{\mathbb{R}^d}u^p\bar{u}^{p+1}\Psi_{j,\iota}(x)dx-\int_{\mathbb{R}^d}u^{p+1}\bar{u}^{p}\Psi_{j,\iota}(x)dx\Big).
		\end{eqnarray*}
Thus $\frac{\partial f}{\partial \zeta_{j,\iota}}={}^t(\frac{\partial f}{\partial p_{j,\iota}},\frac{\partial f}{\partial q_{j,\iota}})$ is the $(j,\iota)$-th coefficients of the decomposition of $u^p\bar{u}^{p+1}$ and $u^{p+1}\bar{u}^p$, and both terms are in $\mathcal{H}^{s}$. Hence, $\nabla_{\zeta}f\in Y_{s}$ and
		\begin{eqnarray} \sup_{(\theta,r,\zeta)\in\mathcal{O}^{s}(\sigma,\mu)}\|\nabla_{\zeta}f\|_{s}\leq C\varepsilon.
		\end{eqnarray}
		To estimate the Hessian of $f$, we have
		\begin{eqnarray*}
			\begin{aligned}
				\frac{\partial^2 f}{\partial p_{i,\iota_1}\partial p_{j,\iota_2}}=\frac{\varepsilon}{2}\int_{\mathbb{R}^d}(pu^{p-1}\bar{u}^{p+1}+2(p+1)|u|^{2p}+pu^{p+1}\bar{u}^{p-1})\Psi_{i,\iota_1}(x)\Psi_{j,\iota_2}(x)dx,\\
				\frac{\partial^2 f}{\partial p_{i,\iota_1}\partial q_{j,\iota_2}}=\frac{\partial^2 f}{\partial q_{i,\iota_1}\partial p_{j,\iota_2}}=\frac{{\mi}\varepsilon p}{2}\int_{\mathbb{R}^d}(u^{p-1}\bar{u}^{p+1}-u^{p+1}\bar{u}^{p-1})\Psi_{i,\iota_1}(x)\Psi_{j,\iota_2}(x)dx,\\
				\frac{\partial^2 f}{\partial q_{i,\iota_1}\partial q_{j,\iota_2}}=\frac{-\varepsilon}{2}\int_{\mathbb{R}^d}(pu^{p-1}\bar{u}^{p+1}-2(p+1)|u|^{2p}+pu^{p+1}\bar{u}^{p-1})\Psi_{i,\iota_1}(x)\Psi_{j,\iota_2}(x)dx.
			\end{aligned}
		\end{eqnarray*}
		Since $u^{p-1}\bar{u}^{p+1},|u|^{2p},u^{p+1}\bar{u}^{p-1}\in\mathcal{H}^{s}$, by Lemma \ref{20247212055}, there exists $\beta\in(0,\frac{1}{8})$ such that
		\begin{eqnarray*}
		\Big\|\frac{\partial^2f}{\partial p_i\partial p_j}\Big\|,~\Big\|\frac{\partial^2f}{\partial p_i\partial q_j}\Big\|,~\Big\|\frac{\partial^2f}{\partial q_i\partial q_j}\Big\|\leq \frac{C\varepsilon}{(ij)^{\frac{\beta}{2}}w^{s}(i,j)}\|u\|_{\mathcal{H}^{s}}^{2p}\leq \frac{C\varepsilon}{(i\wedge j)^{\beta}w^{s}(i,j)}\|u\|_{\mathcal{H}^{s}}^{2p}.
		\end{eqnarray*}
		Thus, $\nabla_{\zeta}^2f\in\mathcal{M}_{{s},\beta}$ and
		\begin{eqnarray}\label{2024721704}
			|\nabla_{\zeta}^2f|_{{s},\beta}\leq C\|u\|_{\mathcal{H}^{s}}^{2p}\varepsilon\leq C\varepsilon.
		\end{eqnarray}
Notice that $f$ does not depend on $\rho$. Therefore, by \eqref{2024721703}-\eqref{2024721704}, we have $f\in\mathcal{T}_{\sigma,\mu,\mathcal{D}}^{{s},\beta}$ and
		\begin{eqnarray}
			[f]_{\sigma,\mu,\mathcal{D}}^{s,\beta}\leq C\varepsilon.
		\end{eqnarray}

		Now applying Theorem \ref{2024371454}, we get the following result.
		\begin{thm}\label{20240726-6}
Consider the equation \eqref{2024341418} parameterized by Hermitian multiplier $M_{\rho}$ in \eqref{20240723-1}. There exists $\varepsilon_*>0$ depending on $n$, $s$, $d$, $p$, $\mathcal{A}$, $\{I_b\}_{1\leq b\leq n}$ such that if $0<\varepsilon<\varepsilon_*$, we obtain a subset $\mathcal{D}_*\subset\mathcal{D}$ with
			\begin{eqnarray}
\text{meas}(\mathcal{D}\backslash \mathcal{D}_*)\leq \varepsilon^{\frac{1}{\alpha}},~~~\alpha=32(1+\frac{d-1}{\beta})(1+\frac{4d-2}{\beta})
			\end{eqnarray}
			and for every $\rho\in\mathcal{D}_*$, the equation has a smooth quasi-periodic solution of the form
			\begin{eqnarray}	u(t,x)=\sum_{(j,\iota)\in\mathfrak{L}_0}\tilde{\xi}_{j,\iota}(t)\Psi_{j,\iota}(x),
			\end{eqnarray}
			where $\{\tilde{\xi}_{j,\iota}\}_{(j,\iota)\in\mathfrak{L}_0}$ are quasi-periodic functions with frequencies $\omega_*$. Moreover,
			\begin{eqnarray}	
|\omega_*-\omega|=O(\varepsilon)
			\end{eqnarray}
and
			\begin{eqnarray}	
\sum_{(j,\iota)\in\mathfrak{L}}\tilde{\xi}_{j,\iota}^2(t)j^{2s }=O(\varepsilon).
			\end{eqnarray}
		\end{thm}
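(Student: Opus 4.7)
The plan is to verify that the Hamiltonian $H=h+f$ in \eqref{20246291942} fits the abstract framework of Theorem \ref{2024371454} and then apply it; essentially all the necessary estimates have been assembled in the discussion preceding the statement, so the proof reduces to bookkeeping plus a short translation step at the end.

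First I would collect the verifications of the hypotheses (A1)--(A3) from the passage just above the statement. Hypothesis (A1) is read off \eqref{20246291937} with $M_{\omega}=\max_{b}(2j_{b}+d-1)$ and $L=1$; hypothesis (A2) is read off \eqref{20246291938} with $b_{1}=2$, $b_{0}=d-2$, $\tilde{A}_{j}^{j}\equiv 0$, $M_{\Omega}=1/100$, $\tilde{\beta}=1$, so that \eqref{20240721-3} is trivial. For (A3), one fixes $\sigma=1/2$, $\mu=\tfrac{1}{2}\min\{1,I_{1},\ldots,I_{n}\}$, and $s>\max\{d-2,d/4\}$ so that $\mathcal{H}^{s}$ is an algebra, then combines \eqref{2024721703}, the analogous $Y_{s}$-bound on $\nabla_{\zeta}f$, and \eqref{2024721704} into $[f]^{s,\beta}_{\sigma,\mu,\mathcal{D}}\le C\varepsilon$ for some $\beta\in(0,1/8)$ supplied by Lemma \ref{20247212055}. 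The only substantive analytic point is the Hessian bound \eqref{2024721704}: one expresses the $2\times 2$ blocks of $\nabla_{\zeta}^{2}f$ as integrals of $\Psi_{i,\iota_{1}}\Psi_{j,\iota_{2}}$ against $u^{p\pm1}\bar u^{p\mp1}$ and $|u|^{2p}$, extracts the decay $(ij)^{-\beta/2}w^{-s}(i,j)$ via Lemma \ref{20247212055}, and relaxes $(ij)^{-\beta/2}$ to $(i\wedge j)^{-\beta}$ to fit the norm \eqref{20240511-1}. Since $f$ is independent of $\rho$, the $\rho$-derivatives in \eqref{20240605-1} vanish automatically.

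Next I would apply Theorem \ref{2024371454}. Choosing $\varepsilon_{*}$ so that $C\varepsilon_{*}$ lies below the $\varepsilon^{*}$ produced by the abstract theorem (its dependence on $n,s,d,p,\mathcal{A},\{I_{b}\}$ is forced by the constants computed above), one obtains the Cantor set $\mathcal{D}_{*}\subset[0,1]^{n}$ with $\mathrm{meas}(\mathcal{D}\setminus\mathcal{D}_{*})\le\varepsilon^{1/\alpha}$, where $\alpha$ matches the claim because $d^{*}=d-1$, together with a real analytic symplectic map $\Phi$ and new data $\omega_{*},A_{*},f_{*}$. For each $\rho\in\mathcal{D}_{*}$, the curve $t\mapsto\Phi(\theta+t\omega_{*}(\rho),0,0;\rho)$ is a real analytic quasi-periodic orbit of $H$ in the $(\theta,r,\zeta)$-coordinates with frequency $\omega_{*}(\rho)$. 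Unwinding the action-angle substitution $p_{j_{b},\iota_{b}}=\sqrt{2(I_{b}+r_{b})}\cos\theta_{b}$, $q_{j_{b},\iota_{b}}=\sqrt{2(I_{b}+r_{b})}\sin\theta_{b}$ and the complexification $\xi_{j,\iota}=(p_{j,\iota}+\mi q_{j,\iota})/\sqrt{2}$, this orbit becomes a quasi-periodic trajectory $(\tilde\xi_{j,\iota}(t))_{(j,\iota)\in\mathfrak{L}_{0}}$, and $u(t,x):=\sum\tilde\xi_{j,\iota}(t)\Psi_{j,\iota}(x)$ is the desired solution of \eqref{2024341418}.

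Finally, the two quantitative estimates fall out of the output of Theorem \ref{2024371454}: the bound $|\omega_{*}-\omega|=O(\varepsilon)$ is \eqref{20240721-7} at the fixed $\mu$, and the smallness of the normal component comes from evaluating \eqref{20240721-6} at $(\theta+t\omega_{*}(\rho),0,0)$, which controls the $Y_{s}$-norm of the $\zeta$-component of $\Phi(\cdot,0,0;\rho)$ by $O(\varepsilon^{1/2})$, so its square equals $\sum_{(j,\iota)\in\mathfrak{L}}\tilde\xi_{j,\iota}^{2}(t)j^{2s}=O(\varepsilon)$. I expect no serious obstacle in this particular proof: the real difficulty sits in Theorem \ref{2024371454} itself and in the matrix calculus it rests on, which are developed in Sections 3--7.
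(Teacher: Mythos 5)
Your proposal is correct and follows essentially the same route as the paper: the verification of (A1)--(A3) and the estimate $[f]^{s,\beta}_{\sigma,\mu,\mathcal{D}}\le C\varepsilon$ are exactly the computations the paper carries out in the paragraphs preceding the theorem, after which the theorem is obtained by direct invocation of Theorem \ref{2024371454}. The only steps you supply that the paper leaves implicit are the unwinding of the action-angle and complexification coordinates and the reading-off of $|\omega_*-\omega|=O(\varepsilon)$ from \eqref{20240721-7} and of $\sum_{(j,\iota)\in\mathfrak{L}}|\tilde\xi_{j,\iota}(t)|^2 j^{2s}=O(\varepsilon)$ from \eqref{20240721-6}, both of which are correct bookkeeping.
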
		

\begin{Remark}
Actually, the precise expression of $\beta$ can be given. For example, if $d\geq3$ and $s>d$ is an integer, then $\beta=\frac{1}{2(d+3)}$, seeing the proof of Lemma 3.2 in \cite{Gre3} for details.
\end{Remark}

\begin{Remark}
We also mention that for equation \eqref{2024341418}, Gr\'{e}bert, Imekraz and Paturelin have proved long time stability of solutions by Birkhoff normal form in \cite{Gre1}.
\end{Remark}

\section{Estimates on matrix norm}
In view of the infinite matrix space $\mathcal{M}_{s,\beta}$ defined by the norm $|\cdot|_{s,\beta}$ in \eqref{20240511-1}, we also introduce its subspace $\mathcal{M}_{s,\beta}^+$ of infinite matrices $A$ with the norm
\begin{eqnarray}\label{20240519-1}		|A|_{s,\beta+}:=\sup_{i,j\geq1}(i{\wedge}j)^{\beta}w(i,j)^s(1+|i-j|)\big\|A_i^j\big\|<+\infty.
\end{eqnarray}
On these two matrix norms, we will prove several basic estimates, which are crucial for establishing our KAM theorem.

\begin{lem}\label{20243141530-1}
		Let $A\in \mathcal{M}_{s,\beta}$ and $B\in \mathcal{M}_{s,\beta}^+$ with $0<\beta\leq\min\{\frac{s}{2},1\}$. Then both $AB$ and $BA$ belong to $\mathcal{M}_{s,\beta}$ with the following estimates
		\begin{eqnarray}\label{2024571512}
			|AB|_{s,\beta},~|BA|_{s,\beta}\leq \frac{2^{\frac{s}{2}+2}}{\beta}|A|_{s,\beta}|B|_{s,\beta+}.
		\end{eqnarray}
	\end{lem}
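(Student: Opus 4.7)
The plan is to start from the block structure of the product, $(AB)_i^j=\sum_k A_i^k B_k^j$, estimate $\|(AB)_i^j\|\leq\sum_k \|A_i^k\|\|B_k^j\|$, and insert the bounds coming from the definitions of $|A|_{s,\beta}$ and $|B|_{s,\beta+}$. After multiplying by $(i\wedge j)^\beta w(i,j)^s$, the proof reduces to showing that the scalar sum
\begin{equation*}
\Sigma(i,j):=\sum_{k\geq1}\frac{(i\wedge j)^\beta w(i,j)^s}{(i\wedge k)^\beta(k\wedge j)^\beta w(i,k)^s w(k,j)^s(1+|k-j|)}
\end{equation*}
is bounded by $2^{s/2+2}/\beta$ uniformly in $i,j$, which produces the stated constant.

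The first step is to cancel the $w$-factors using the weight submultiplicativity of Lemma \ref{20243122043}, namely $w(i,j)^s\leq 2^{s/2}w(i,k)^s w(k,j)^s$; this contributes the factor $2^{s/2}$ and leaves the sum $\sum_k (i\wedge j)^\beta/[(i\wedge k)^\beta(k\wedge j)^\beta(1+|k-j|)]$. I would then split the sum according to whether $k\geq i\wedge j$ or $k<i\wedge j$. In the former region both $(i\wedge k)$ and $(k\wedge j)$ are at least $i\wedge j$, so the numerator $(i\wedge j)^\beta$ is absorbed at once, and what remains is $\sum_k 1/[(k\wedge j)^\beta(1+|k-j|)]$, which the elementary inequality of Lemma \ref{20243122017} bounds by roughly $2/\beta$ (this is where the $1/\beta$ is born). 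In the latter region $(i\wedge k)=(k\wedge j)=k$, so one faces the potentially large ratio $(i\wedge j)^\beta/k^{2\beta}$, which must be absorbed by the decay of $w(i,k)^s w(k,j)^s$: when $k\leq(i\wedge j)/2$, at least one of $|i-k|,|k-j|$ is comparable to $i\wedge j$, giving $w(i,k)^s\geq ((i\wedge j)/(2\sqrt{k}))^s$ (similarly for $w(k,j)$), which under the hypothesis $2\beta\leq s$ is more than enough to compensate the $k^{-2\beta}$ blow-up; when $(i\wedge j)/2<k<i\wedge j$, the ratio $(i\wedge j)^\beta/k^{2\beta}$ is itself of order $1$ and the bound is immediate.

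The statement for $BA$ follows from the symmetric argument: writing $(BA)_i^j=\sum_k B_i^k A_k^j$, the factor $(1+|k-j|)$ is replaced by $(1+|i-k|)$ and the same case analysis applies after exchanging the roles of $i$ and $j$. The composite constant $2^{s/2}\cdot 4 = 2^{s/2+2}$ accounts for the announced bound. The main obstacle is the low-$k$ sub-sum $k<i\wedge j$: this is precisely where the weakening of the Hessian weight from $(ij)^\beta$ (as in \cite{Gre2,Gre3}) to $(i\wedge j)^\beta$ makes the argument genuinely more delicate, and where the condition $\beta\leq s/2$ must be invoked to close the estimate.
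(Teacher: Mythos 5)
Your reduction to the scalar sum $\Sigma(i,j)$ is the same as the paper's, and your treatment of the low-$k$ region is essentially the paper's Case~1. However, the opening move of your argument contains a genuine error that makes the rest collapse. You propose to cancel \emph{all} the $w$-factors at the outset via $w(i,j)^s\leq 2^{s/2}w(i,k)^s w(k,j)^s$, leaving only the sum
$\sum_{k}(i\wedge j)^{\beta}/\bigl[(i\wedge k)^{\beta}(k\wedge j)^{\beta}(1+|k-j|)\bigr]$.
This sum \emph{diverges}. Take $i\leq j$ and look at the tail $k>j$: there $(i\wedge k)^{\beta}(k\wedge j)^{\beta}=i^{\beta}j^{\beta}$ is independent of $k$, so the tail reduces to a constant times $\sum_{k>j}1/(1+k-j)$, which is the harmonic series. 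Lemma~\ref{20243122017} does \emph{not} bound $\sum_{k}1/\bigl[(k\wedge j)^{\beta}(1+|k-j|)\bigr]$; it bounds $\sum_{k}1/\bigl[k^{\beta}(1+|k-j|)\bigr]$, and the two differ precisely in this large-$k$ tail where $(k\wedge j)^{\beta}=j^{\beta}$ stops growing while $k^{\beta}$ does not. (Your write-up is also internally inconsistent: having discarded the $w$-factors in step one, you re-use them in the low-$k$ sub-case, which suggests you already sensed they cannot be thrown away wholesale.)

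The paper avoids this by not discarding the $w$-factors globally. With $i\leq j$, it splits $k$ into three ranges, $k\leq i/2$, $i/2<k<2j$, and $k\geq 2j$, and only in the middle range does it use the bare submultiplicativity $w(i,j)\leq w(i,k)w(k,j)$ to kill the $w$-ratio, because there $(i\wedge k)(k\wedge j)>ik/2$ supplies the needed decay in $k$. In the tail $k\geq 2j$, it instead keeps $w(k,j)$: Lemma~\ref{20243122043} gives $w(i,j)\leq w(i,k)$ (so only one factor is cancelled), and the explicit bound $w(k,j)=1+(k-j)/\sqrt{j}>\sqrt{k/2}$ supplies a factor $(2/k)^{s/2}$ that restores summability. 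To repair your argument you would need to replace the blanket cancellation with the paper's region-dependent use of Lemma~\ref{20243122043}, or at minimum retain $w(k,j)^{-s}\lesssim k^{-s/2}$ on the large-$k$ tail.
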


	\begin{proof}			
	The proof of $BA$ is similar to the proof of $AB$. Thus, we only prove the inequality of $AB$ in (\ref{2024571512}). In view of the definitions of matrix norms in \eqref{20240511-1} and \eqref{20240519-1}, it is sufficient to verify
		\begin{eqnarray}\label{2024571606}
		I:=\sum_{k\geq1} \frac{(i\wedge j)^{\beta}}{(i\wedge k)^{\beta}(k\wedge j)^{\beta}(1+|k-j|)}\Big(\frac{w(i,j)}{w(i,k)w(k,j)}\Big)^{{s}}\leq\frac{2^{\frac{s}{2}+2}}{\beta}
		\end{eqnarray}
for any $i,j\geq1$. Now fix $i,j\geq1$ with $i\leq j$, and decompose the sum in \eqref{2024571606} into three parts: $I_1=\sum_{1\leq k\leq \frac{i}{2}}$, $I_2=\sum_{\frac{i}{2}< k< 2j}$ and $I_3=\sum_{k\geq 2j}$.

For the first sum, we have $i\wedge k=k$, $k\wedge j=k$ and
		\begin{eqnarray*}\label{2024561955}
			w(i,k)= 1+\frac{i-k}{\sqrt{k}}\geq
1+\frac{{i}/{2}}{\sqrt{{i}/{2}}}>\sqrt{{i}/{2}}.
		\end{eqnarray*}
Moreover, we have $w(i,j)\leq w(k,j)$ by Lemma \ref{20243122043}. Thus,
		\begin{eqnarray}\label{2024562049}
				I_1<\sum_{1\leq k\leq\frac{i}{2}}\frac{i^{\beta}}{k^{2\beta}(1+|k-j|)}\big(\frac{2}{i}\big)^{\frac{s}{2}}\leq 2^{\frac{s}{2}}\sum_{1\leq k\leq\frac{i}{2}}\frac{1}{k^{2\beta}(1+|k-j|)},
		\end{eqnarray}
where the assumption $\beta\leq\frac{s}{2}$ is used in the last inequality.

For the second sum, we have $ (i\wedge k)(k\wedge j)> ik/2$. Moreover, we have $w(i,j)\leq w(i,k)w(k,j)$ by Lemma \ref{20243122043}. Thus,
		\begin{eqnarray}\label{20245620491}
			I_2<\sum_{\frac{i}{2}< k< 2j}\frac{i^{\beta}}{(ik/2)^{\beta}(1+|k-j|)}= 2^{\beta}\sum_{\frac{i}{2}< k< 2j}\frac{1}{k^{\beta}(1+|k-j|)}.
		\end{eqnarray}

For the last sum, we have $i\wedge k=i$, $k\wedge j=j$ and
		\begin{eqnarray*}\label{2024562040}
			w(k,j)=1+\frac{k-j}{\sqrt{j}}\geq 1+\frac{k/2}{\sqrt{k/2}}>\sqrt{k/2}.
		\end{eqnarray*}
Moreover, we have $w(i,j)\leq w(i,k)$ by Lemma \ref{20243122043}. Thus,
		\begin{eqnarray}\label{20245620492}
			I_3<\sum_{k\geq
2j}\frac{1}{1+|k-j|}\big(\frac{2}{k}\big)^{\frac{s}{2}}= 2^{\frac{s}{2}}\sum_{k\geq 2j}\frac{1}{k^{\frac{s}{2}}(1+|k-j|)}.
		\end{eqnarray}

Combining (\ref{2024562049})-(\ref{20245620492}), by $0<\beta\leq\min\{\frac{s}{2},1\}$ and Lemma \ref{20243122017}, we get
\begin{eqnarray}\label{20240520-2}
			I<2^{\frac{s}{2}}\sum_{k\geq 1}\frac{1}{k^{\beta}(1+|k-j|)}<2^{\frac{s}{2}+1}\big(1+\frac{1}{\beta}\big)\leq\frac{2^{\frac{s}{2}+2}}{\beta}.
		\end{eqnarray}
This completes the proof of (\ref{2024571606}) with $i\leq j$. We can prove (\ref{2024571606}) with $i>j$ in the same way.
	\end{proof}

\begin{lem}\label{20243141530-2}
		Let $A,B\in \mathcal{M}_{s,\beta}^+$ with $0<\beta\leq\min\{\frac{s}{2},1\}$. Then both $AB$ and $BA$ belong to $\mathcal{M}_{s,\beta}^+$ with the following estimates
		\begin{eqnarray}\label{20240521-1}
			|AB|_{s,\beta+},~|BA|_{s,\beta+}\leq \frac{2^{\frac{s}{2}+3}}{\beta}|A|_{s,\beta+}|B|_{s,\beta+}.
		\end{eqnarray}
	\end{lem}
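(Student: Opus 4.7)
The plan is to reduce the proof to Lemma \ref{20243141530-1} via an additive triangle inequality that dissolves the extra factor $(1+|i-j|)$ that distinguishes the $|\cdot|_{s,\beta+}$ norm from $|\cdot|_{s,\beta}$. Since the argument for $BA$ is parallel to that for $AB$, I would only treat $AB$ in detail. Writing $(AB)_i^j=\sum_k A_i^k B_k^j$ and inserting the definition of $|\cdot|_{s,\beta+}$, the claim reduces to verifying that for every $i,j\geq 1$,
$$
I':=\sum_{k\geq 1}\frac{(1+|i-j|)(i\wedge j)^{\beta}}{(1+|i-k|)(1+|k-j|)(i\wedge k)^{\beta}(k\wedge j)^{\beta}}\left(\frac{w(i,j)}{w(i,k)w(k,j)}\right)^{s}\leq \frac{2^{s/2+3}}{\beta}.
$$

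The central observation is the elementary bound
$$
1+|i-j|\leq (1+|i-k|)+(1+|k-j|),
$$
which follows directly from $|i-j|\leq |i-k|+|k-j|$. Substituting this into the numerator of $I'$ splits the sum into two pieces,
$$
I'\leq I_A+I_B,
$$
where in $I_A$ the factor $(1+|i-k|)$ is cancelled from the denominator and in $I_B$ it is the factor $(1+|k-j|)$ that is cancelled.

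The quantity $I_A$ is precisely the sum denoted $I$ in the statement and proof of Lemma \ref{20243141530-1}, so $I_A\leq 2^{s/2+2}/\beta$. For $I_B$, I would observe that the symmetries $w(i,k)=w(k,i)$, $w(k,j)=w(j,k)$, and $i\wedge k=k\wedge i$ show that $I_B$ coincides with $I_A$ after the substitution $i\leftrightarrow j$; applying Lemma \ref{20243141530-1} to the swapped indices $(j,i)$ then gives $I_B\leq 2^{s/2+2}/\beta$ as well. Summing the two contributions yields $I'\leq 2\cdot 2^{s/2+2}/\beta=2^{s/2+3}/\beta$, which is exactly the constant appearing in \eqref{20240521-1}.

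I do not expect any substantial obstacle here: the norm $|\cdot|_{s,\beta+}$ has been tailored so that the extra decay factor $(1+|i-j|)$ can be absorbed additively against one of the two analogous factors attached to the off-diagonal blocks of $A$ and $B$, after which the hard analytical work has already been carried out in Lemma \ref{20243141530-1}. The only mildly delicate point is to invoke that lemma twice, once with the original ordering and once after the symmetric swap, which is why the constant picks up an extra factor of $2$ compared with \eqref{2024571512}.
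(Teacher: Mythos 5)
Your proposal is correct and follows essentially the same route as the paper: reduce to the sum $I'$, split the numerator factor $1+|i-j|$ additively via the triangle inequality $1+|i-j|\leq(1+|i-k|)+(1+|k-j|)$, and observe that each resulting term is the sum from Lemma \ref{20243141530-1} (one with the original indices, one after swapping $i\leftrightarrow j$, using symmetry of $w$ and $\wedge$). The paper compresses this last step to a single sentence, but the content is identical, including the extra factor of $2$ in the constant.
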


	\begin{proof}			
	By symmetry, we only prove the inequality of $AB$ in (\ref{20240521-1}). In view of the definition of matrix norm in \eqref{20240519-1}, it is sufficient to verify
		\begin{eqnarray}\label{20240521-2}
		I:=\sum_{k\geq1} \frac{(i\wedge j)^{\beta}(1+|i-j|)}{(i\wedge k)^{\beta}(k\wedge j)^{\beta}(1+|i-k|)(1+|k-j|)}\Big(\frac{w(i,j)}{w(i,k)w(k,j)}\Big)^{{s}}\leq\frac{2^{\frac{s}{2}+3}}{\beta}
		\end{eqnarray}
for any $i,j\geq1$. By $1+|i-j|<1+|i-k|+1+|k-j|$, we have
        \begin{eqnarray}\label{20240521-3}
		\frac{1+|i-j|}{(1+|i-k|)(1+|k-j|)}<\frac{1}{1+|k-j|}+\frac{1}{1+|i-k|}.
		\end{eqnarray}
Thus, the proof of \eqref{20240521-2} is translated to the proof of Lemma \ref{20243141530-1}.
\end{proof}

\begin{lem}\label{20243141530-3}
		Let $A\in \mathcal{M}_{s,\beta}^+$ with $s\geq0$ and $0\leq\beta'<\beta\leq\frac{1}{2}$. Then $A\in \mathcal{L}(Y_s,Y_{s+\beta'})$ with the following estimate
		\begin{eqnarray}\label{2024581524}
			\|A\zeta\|_{s+\beta'}\leq \frac{2^{s+4}}{\beta-\beta'}|A|_{s,\beta+}\|\zeta\|_{s}.
		\end{eqnarray}
	\end{lem}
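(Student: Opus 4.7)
The goal is to estimate $\|A\zeta\|_{s+\beta'}^2 = \sum_i i^{2(s+\beta')} |(A\zeta)_i|^2$ using the hypothesis $A \in \mathcal{M}_{s,\beta}^+$. Starting from $|(A\zeta)_i| \leq \sum_j \|A_i^j\|\,|\zeta_j|$ together with the bound $\|A_i^j\| \leq |A|_{s,\beta+}\bigl[(i\wedge j)^\beta w(i,j)^s (1+|i-j|)\bigr]^{-1}$, the problem reduces to an $\ell^2$-boundedness estimate for the scalar kernel $K(i,j) := \frac{i^{s+\beta'}j^{-s}}{(i\wedge j)^\beta w(i,j)^s (1+|i-j|)}$: setting $b_j := j^s|\zeta_j|$ and $a_i := i^{s+\beta'}|(A\zeta)_i|$, one has $a_i \leq |A|_{s,\beta+} \sum_j K(i,j) b_j$, $\|b\|_{\ell^2} = \|\zeta\|_s$, and $\|a\|_{\ell^2} = \|A\zeta\|_{s+\beta'}$.

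The plan is to control $\|K\|_{\ell^2 \to \ell^2}$ by the Schur approach (applying Lemma \ref{20240527-4} after a suitable change of weight), which after pulling out the $i$- and $j$-dependent powers reduces to a single-index sum of the form $\sum_k \frac{k^c}{(i\wedge k)^\beta w(i,k)^s(1+|i-k|)}$ for a suitable exponent $c \leq s$, to be bounded by $C\cdot i^{c-\beta}/(\beta-\beta')$. This is proved as in Lemmas \ref{20243141530-1} and \ref{20243141530-2}: split the $k$-sum into the three ranges $k \leq i/2$, $i/2 < k < 2i$, $k \geq 2i$; estimate $w(i,k)$ in each range via Lemma \ref{20243122043}; and reduce to a residual sum $\sum_k 1/[k^{\beta-\beta'}(1+|k-i|)]$. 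This last sum is bounded by Lemma \ref{20243122017} applied with the parameter $\beta$ in \eqref{20240527-1} replaced by $\beta-\beta'$ --- the substitution explicitly flagged in the paper's outline --- producing the factor $1/(\beta-\beta')$ in \eqref{2024581524}.

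The main obstacle is that the weight $(i\wedge j)^\beta$ in $|A|_{s,\beta+}$ is strictly weaker than the $(ij)^\beta$ used in \cite{Gre2, Gre3} and does not permit the full regularizing gain $A\colon Y_s \to Y_{s+\beta}$. Heuristically one must \emph{spend} the factor $(i\wedge k)^{\beta-\beta'}$ from the kernel to carry out the sum over $k$, retaining only $(i\wedge k)^{\beta'}$ for absorption into the $Y_{s+\beta'}$-norm. Attempting $\beta' = \beta$ would require Lemma \ref{20243122017} at its critical endpoint $\sum_k 1/(1+|k-i|)$, which diverges logarithmically; this is why the conclusion necessarily requires $\beta' < \beta$, and why $1/(\beta-\beta')$ (rather than $1/\beta$) appears in the final constant. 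The remaining prefactor $2^{s+4}$ absorbs the constants produced at each application of Lemma \ref{20243122017} (as in the $2^{s/2+2}/\beta$ of Lemma \ref{20243141530-1}), together with the standard bound $(a+b)^s \leq 2^s(a^s+b^s)$ used on $w(i,k)^s$ in each of the three regions.
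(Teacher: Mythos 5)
Your initial reduction to the scalar kernel $B=(b_i^j)$ with $b_i^j=i^{s+\beta'}j^{-s}(i\wedge j)^{-\beta}(1+|i-j|)^{-1}w(i,j)^{-s}$ is exactly the paper's, but your plan to bound $\|B\|_{\ell^2\to\ell^2}$ by one global Schur test has a genuine gap that no amount of splitting the $k$-sum can close. The unweighted Schur test (Lemma~\ref{20240527-4}) requires both $\sup_i\sum_j b_i^j<\infty$ and $\sup_j\sum_i b_i^j<\infty$, and the second fails: taking $j=1$ gives $w(i,1)=i$ and $b_i^1=i^{\beta'-1}$, so $\sum_i b_i^1=\sum_i i^{\beta'-1}=\infty$ for every $\beta'\ge 0$. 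You do say ``after a suitable change of weight,'' but you never write the weight down, and the residual sum you propose cannot be right either: already at $c=s$ (permitted by your ``$c\le s$'') the tail $\sum_{k\ge 2i}\frac{k^{s}}{(i\wedge k)^{\beta}w(i,k)^{s}(1+|i-k|)}\gtrsim i^{s/2-\beta}\sum_{k\ge 2i}k^{-1}$ diverges. The residual $\sum_k 1/[k^{\beta-\beta'}(1+|k-i|)]$ is what appears only on the near-diagonal band of the paper's proof, not in a global Schur check.

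The paper's actual argument is a hybrid, and that is the idea you are missing. It splits $B=B_1+B_2+B_3$ by the ranges $i\le j/2$, $j/2<i<2j$, $i\ge 2j$. On the near-diagonal band $B_2$ it uses the Schur test, with row and column sums of the form $\sum_k 1/[k^{\beta-\beta'}(1+|k-j|)]$ controlled by \eqref{20240520-1}. On the off-diagonal pieces $B_1$ and $B_3$ --- exactly where the column sums diverge and Schur is unavailable --- it instead bounds the $\ell^2$-operator norm by the Frobenius norm via Lemma~\ref{20240527-2}, after showing $b_i^j\lesssim i^{-\frac12(1+\beta-\beta')}j^{-\frac12(1+\beta-\beta')}$, which is square-summable precisely because $1+\beta-\beta'>1$; here \eqref{20240527-1} is applied with $\beta$ replaced by $\beta-\beta'$. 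Your proposal never invokes Lemma~\ref{20240527-2}, and without it the two off-diagonal pieces of the kernel have no workable $\ell^2$ bound.
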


	\begin{proof}
	Write the block wise representation of $A\in\mathcal{M}_{s,\beta}^+$ and $\zeta\in Y_s$, i.e.,  $A=(A_i^j)_{i,j\in\mathbb{N}^+}$ and $\zeta=(\zeta_j)_{j\in\mathbb{N}^+}$, where $A_i^j\in\mathrm{M}_{2d_i\times 2d_j}(\mathbb{C})$ and $\zeta_j\in\mathbb{C}^{2d_j}$. Then we have
			\begin{eqnarray}
				\begin{aligned}					
\|A\zeta\|_{s+\beta'}^2&=\sum_{i\in\mathbb{N}^+}i^{2(s+\beta')}\Big|\sum_{ j\in\mathbb{N}^+}A_{i}^{j}\zeta_{j}\Big|^2\\
&\leq \sum_{i\in\mathbb{N}^+}i^{2(s+\beta')}\Big(\sum_{ j\in\mathbb{N}^+}\big\|A_{i}^{j}\big\||\zeta_{j}|\Big)^2\\
&\leq |A|_{s,\beta+}^2\sum_{i\in\mathbb{N}^+}i^{2(s+\beta')}\Big(\sum_{ j\in\mathbb{N}^+}(i\wedge j)^{-\beta}(1+|i-j|)^{-1}w(i,j)^{-s}|\zeta_{j}|\Big)^2\\
&\leq |A|_{s,\beta+}^2\|B\|_{\ell^2\rightarrow\ell^2}^2\|\zeta\|_s^2,
				\end{aligned}
			\end{eqnarray}	
where
		\begin{eqnarray}\label{2024582222}			B=\big(b_i^j\big)_{i,j\in\mathbb{N}^+}:=\Big(i^{s+\beta'}j^{-s}(i\wedge j)^{-\beta}(1+|i-j|)^{-1}w(i,j)^{-s}\Big)_{i,j\in\mathbb{N}^+}.
		\end{eqnarray}
Thus, it is sufficient to verify
\begin{eqnarray}\label{20240521-4}
			\|B\|_{\ell^2\rightarrow\ell^2}\leq \frac{2^{s+4}}{\beta-\beta'}.
		\end{eqnarray}

	To estimate the $\ell^2$-operator norm of $B$, we decompose $B=B_1+B_2+B_3$ with
\begin{eqnarray*}
B_1=\big(b_i^j\chi_{i\leq\frac{j}{2}}\big)_{i,j\in\mathbb{N}^+},~~~B_2=\big(b_i^j\chi_{\frac{j}{2}<i<2j}\big)_{i,j\in\mathbb{N}^+},~~~B_3=\big(b_i^j\chi_{i\geq2j}\big)_{i,j\in\mathbb{N}^+},
\end{eqnarray*}
where $\chi_{i\leq\frac{j}{2}}=1$ if $i\leq\frac{j}{2}$, and $\chi_{i\leq\frac{j}{2}}=0$ otherwise; it is similar for $\chi_{\frac{j}{2}<i<2j}$ and $\chi_{i\geq2j}$. In the following, we estimate $b_{i}^j$ in three cases respectively.

\textbf{Case 1: $i\leq\frac{j}{2}$}. By $i\wedge j=i$, $1+|i-j|>\frac{j}{2}$ and $w(i,j)>1$, we have
\begin{eqnarray}\label{202451022411}
			b_{i}^j<\frac{2}{i^{-s+\beta-\beta'}j^{s+1}}< \frac{2}{i^{\frac{1}{2}(1+\beta-\beta')}j^{\frac{1}{2}(1+\beta-\beta')}},
\end{eqnarray}
where the facts $j>i$ and $s+\frac{1}{2}(1-\beta+\beta')>0$ are used in the last inequality. Then by \eqref{202451022411} and Lemma \ref{20240527-2}, we get
\begin{eqnarray}\label{20240527-6}
\|B_1\|_{\ell^2\rightarrow\ell^2}\leq\Big(\sum_{i,j\in\mathbb{N}^+}\frac{4}{i^{1+\beta-\beta'}j^{1+\beta-\beta'}}\Big)^{\frac{1}{2}}<2+\frac{2}{\beta-\beta'},
\end{eqnarray}
where the last inequality follows from \eqref{20240527-1} in Lemma \ref{20243122017}.

\textbf{Case 2: $\frac{j}{2}<i<2j$}. By  $i\wedge j>\frac{i}{2}$, $j>\frac{i}{2}$ and $w(i,j)\geq1$, we have
\begin{eqnarray}\label{20245102241}
			b_{i}^j < \frac{2^{s+\beta}}{i^{\beta-\beta'}(1+|i-j|)},
\end{eqnarray}
and thus
\begin{eqnarray}\label{20240527-7}
\|B_2\|_{\ell^1\rightarrow\ell^1}\leq\sup_{j\in\mathbb{N}^+}\sum_{i\in\mathbb{N}^+}\frac{2^{s+\beta}}{i^{\beta-\beta'}(1+|i-j|)}<2^{s+\beta+1}\big(1+\frac{1}{\beta-\beta'}\big),
\end{eqnarray}
where the last inequality follows from \eqref{20240520-1} in Lemma \ref{20243122017}.
On the other hand, by  $i\wedge j>\frac{j}{2}$, $i<2j$ and $w(i,j)\geq1$, we have
\begin{eqnarray}\label{20240526-1}
			b_{i}^j < \frac{2^{s+\beta+\beta'}}{j^{\beta-\beta'}(1+|i-j|)},
\end{eqnarray}
and thus
\begin{eqnarray}\label{20240527-8}
\|B_2\|_{\ell^\infty\rightarrow\ell^\infty}\leq\sup_{i\in\mathbb{N}^+}\sum_{j\in\mathbb{N}^+}\frac{2^{s+\beta+\beta'}}{j^{\beta-\beta'}(1+|i-j|)}<2^{s+\beta+\beta'+1}\big(1+\frac{1}{\beta-\beta'}\big),
\end{eqnarray}
where the last inequality follows from \eqref{20240520-1} in Lemma \ref{20243122017}. By \eqref{20240527-7}, \eqref{20240527-8} and Lemma \ref{20240527-4}, we get
\begin{eqnarray}\label{20240527-9}
\|B_2\|_{\ell^2\rightarrow\ell^2}<2^{s+\beta+\frac{\beta'}{2}+1}\big(1+\frac{1}{\beta-\beta'}\big).
\end{eqnarray}

\textbf{Case 3: $i\geq2j$}. By  $i\wedge j=j$, $1+|i-j|>\frac{i}{2}$  and $w(i,j)>\frac{i}{2\sqrt{j}}$, we have
		\begin{eqnarray}\label{20245102240}
			b_{i}^j<\frac{i^{s+\beta'} }{j^{s+\beta}}\frac{2}{i}\Big(\frac{2\sqrt{j}}{i}\Big)^s=\frac{2^{s+1}}{i^{1-\beta'}j^{\frac{s}{2}+\beta}}\leq\frac{2^{s+1}}{i^{1-\beta'}j^{\beta}}<\frac{2^{s+1}}{i^{\frac{1}{2}(1+\beta-\beta')}j^{\frac{1}{2}(1+\beta-\beta')}},
		\end{eqnarray}
where the facts $i>j$ and $\frac{1}{2}(1-\beta-\beta')>0$ are used in the last inequality. Then by \eqref{20245102240} and Lemma \ref{20240527-2}, we get
\begin{eqnarray}\label{20240527-10}
\|B_3\|_{\ell^2\rightarrow\ell^2}\leq\Big(\sum_{i,j\in\mathbb{N}^+}\frac{2^{2s+2}}{i^{1+\beta-\beta'}j^{1+\beta-\beta'}}\Big)^{\frac{1}{2}}<2^{s+1}\big(1+\frac{1}{\beta-\beta'}\big),
\end{eqnarray}
where the last inequality follows from \eqref{20240527-1} in Lemma \ref{20243122017}.

Finally, summing the estimates \eqref{20240527-6}, \eqref{20240527-9} and \eqref{20240527-10}, we get
\begin{eqnarray}\label{20240527-11}
\|B\|_{\ell^2\rightarrow\ell^2}<\big(2+2^{s+\beta+\frac{\beta'}{2}+1}+2^{s+1}\big)\big(1+\frac{1}{\beta-\beta'}\big)<2^{s+3}\frac{2}{\beta-\beta'},
\end{eqnarray}
which is \eqref{20240521-4}. This completes the proof of the lemma.
	\end{proof}

	\begin{lem}\label{20243141530-4}
Let $\eta\in Y_{s+\beta}$ and $\zeta\in Y_{s}$ with $s\geq0$ and $\beta\geq0$. Denoting $A:=\eta\otimes \zeta$, then we have $A\in\mathcal{M}_{s,\beta}$ with the following estimate
\begin{eqnarray}\label{2024571512}
	|A|_{s,\beta}\leq \|\eta\|_{s+\beta}\|\zeta\|_{s}.
\end{eqnarray}
	\end{lem}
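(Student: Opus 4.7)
The plan is to reduce the estimate to a block-by-block computation. Writing $\eta = (\eta_i)_{i\geq 1}$ and $\zeta = (\zeta_j)_{j\geq 1}$ in their block decompositions, the tensor product $A = \eta \otimes \zeta$ has blocks $A_i^j = \eta_i \otimes \zeta_j \in \mathrm{M}_{2d_i \times 2d_j}(\mathbb{C})$, which are rank-one operators from $\mathbb{C}^{2d_j}$ to $\mathbb{C}^{2d_i}$ with $\ell^2$-operator norm exactly $\|A_i^j\| = |\eta_i|\,|\zeta_j|$. So it suffices, for each pair $(i,j)$, to establish
\[
(i\wedge j)^{\beta}\, w(i,j)^{s}\, |\eta_i|\, |\zeta_j| \;\leq\; \|\eta\|_{s+\beta}\,\|\zeta\|_{s}.
\]

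The coordinatewise bounds $|\eta_i| \leq \|\eta\|_{s+\beta}\, i^{-(s+\beta)}$ and $|\zeta_j| \leq \|\zeta\|_{s}\, j^{-s}$, read off directly from the definition of the Sobolev-type norms on $Y_{s+\beta}$ and $Y_s$, reduce the problem to the purely numerical inequality
\[
(i\wedge j)^{\beta}\, w(i,j)^{s} \;\leq\; i^{s+\beta}\, j^{s}, \qquad i,j \geq 1.
\]
When $i \leq j$ the factor $(i\wedge j)^\beta = i^\beta$ cancels on both sides and the claim becomes $w(i,j)^s \leq (ij)^s$; when $j \leq i$ the same sufficient condition suffices, since the right-hand side then equals $(ij)^s (i/j)^\beta$ and the extra factor is $\geq 1$ for $\beta \geq 0$.

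The only remaining point is the elementary estimate $w(i,j) \leq ij$ for all $i,j \geq 1$. Taking WLOG $i \leq j$ and writing $j = i + k$ with $k \geq 0$, this is
\[
\sqrt{i} + k \;\leq\; i^{3/2}(i+k),
\]
which is immediate from $i \geq 1$ (split the right-hand side as $i^{3/2}\cdot i + i^{3/2}\cdot k$ and compare termwise). I do not expect any serious obstacle in this lemma: it is essentially a bookkeeping statement about how the block weight $(i\wedge j)^{\beta} w(i,j)^s$ interacts with the Sobolev weights $i^{-(s+\beta)}\,j^{-s}$ under a rank-one outer product. In particular, no small-divisor analysis, Poisson-bracket estimate, or iterative scheme is required; the distribution of regularity is forced (the $\beta$ must be absorbed by $\eta$, since $(i\wedge j)^\beta$ is controlled by $i^\beta$ only via $\|\eta\|_{s+\beta}$), and once that is recognized the verification is a one-line pointwise check.
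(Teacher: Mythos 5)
Your proof is correct and follows essentially the same route as the paper's: reduce to the block bounds $\|A_i^j\| = |\eta_i||\zeta_j|$, apply the coordinatewise Sobolev bounds $|\eta_i|\leq i^{-(s+\beta)}\|\eta\|_{s+\beta}$, $|\zeta_j|\leq j^{-s}\|\zeta\|_s$, and conclude from the elementary facts $i\wedge j\leq i$ and $w(i,j)\leq ij$. The only cosmetic difference is that you split into cases $i\leq j$ and $j\leq i$, whereas the paper invokes $i\wedge j\leq i$ uniformly, which collapses the two cases into one line.
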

	\begin{proof}
	For $i,j\in\mathbb{N}^+$, we have
	\begin{eqnarray}
		\|A_i^j\|=|\eta_i||\zeta_j|\leq
i^{-(s+\beta)}j^{-s}\|\eta\|_{s+\beta}\|\zeta\|_{s}.
	\end{eqnarray}
Thus, it is sufficient to verify
\begin{eqnarray}
(i\wedge j)^{\beta}w(i,j)^{s}i^{-(s+\beta)}j^{-s}\leq1,
\end{eqnarray}
which follows from $i\wedge j\leq i$ and $w(i,j)\leq ij$. This completes the proof of the lemma.
	\end{proof}

\section{Poisson bracket and Hamiltonian flow}
In this section, we shall study Poisson brackets and Hamiltonian flows. Before that, we define the truncation functions: for function $f\in \mathcal{T}_{\sigma,\mu,\mathcal{D}}^{s,\beta}$, define its truncation functions $f^T$, which are the second order Taylor approximation of $f$ at $r=0$ and $\zeta=0$. That is,
	\begin{eqnarray}\label{2024531000}
		\begin{aligned}
			f^T&:=f_{\theta}+\langle f_{r},r\rangle+\langle f_{\zeta},\zeta\rangle+\frac{1}{2}\langle \zeta,f_{\zeta\zeta}\zeta\rangle\\&=f(\theta,0;\rho)+\langle \nabla_r f(\theta,0;\rho),r\rangle+\langle \nabla_{\zeta}f(\theta,0;\rho),\zeta\rangle+\frac{1}{2}\langle\zeta,\nabla_{\zeta}^2f(\theta,0;\rho)\zeta\rangle.
		\end{aligned}
	\end{eqnarray}	
	According to the definition of the norm $[f]_{\sigma,\mu,\mathcal{D}}^{s,\beta}$ in (\ref{20240605-1}), for $\theta\in \mathbb{T}_{\sigma}^n$ and $\rho\in \mathcal{D}$, we have
	\begin{eqnarray}\label{20243141517-1}
			\sup_{{\nu}=0,1}|\partial_{\rho}^{\nu}f_{\theta}(\theta;\rho)|\leq [f]_{\sigma,\mu,\mathcal{D}}^{s,\beta},
	\end{eqnarray}
	\begin{eqnarray}\label{20243141517-2}
			\sup_{{\nu}=0,1}\|\partial_{\rho}^{\nu}f_{\zeta}(\theta;\rho)\|_s\leq \mu^{-1}[f]_{\sigma,\mu,\mathcal{D}}^{s,\beta},
	\end{eqnarray}
	\begin{eqnarray}\label{20243141517-3} \sup_{{\nu}=0,1}|\partial_{\rho}^{\nu}f_{\zeta\zeta}(\theta;\rho)|_{s,\beta}\leq \mu^{-2}[f]_{\sigma,\mu,\mathcal{D}}^{s,\beta}.
	\end{eqnarray}
	Furthermore, using the Cauchy estimates, we obtain
	\begin{eqnarray}\label{202431415171}
		\sup_{{\nu}=0,1}|\partial_{\rho}^{\nu}f_{r}(\theta;\rho)|\leq \mu^{-2}[f]_{\sigma,\mu,\mathcal{D}}^{s,\beta},
	\end{eqnarray}
	and $(\partial_{\rho}^{\nu}f_{\zeta\zeta})_{{\nu}=0,1}$ are bounded linear operators from $Y_s$ to itself, satisfying
	\begin{eqnarray}\label{2024314151711}
		\begin{aligned}				\sup_{{\nu}=0,1}\|\partial_{\rho}^{\nu}f_{\zeta\zeta}(\theta;\rho)\|_{\mathcal{L}(Y_s,Y_s)}&=\sup_{{\nu}=0,1}\|\partial_{\rho}^{\nu}\nabla_{\zeta}^2f(\theta,0;\rho)\|_{\mathcal{L}(Y_s,Y_s)}\\&\leq \mu^{-1}\sup_{{\nu}=0,1\atop(\theta,r,\zeta)\in\mathcal{O}^s(\sigma,\mu)}\|\partial_{\rho}^{\nu}\nabla_{\zeta}f(\theta,r,\zeta;\rho)\|_{s}\\&\leq \mu^{-2}[f]_{\sigma,\mu,\mathcal{D}}^{s,\beta}.
		\end{aligned}
	\end{eqnarray}

\subsection{Poisson bracket}
With the symplectic structure $dr\wedge d\theta+dp\wedge dq$, the Poisson brackets of $f$ and $g$ are defined by
	\begin{eqnarray}\label{20243191701}		\{f,g\}=-\nabla_rf\cdot\nabla_{\theta}g+\nabla_{\theta}f\cdot\nabla_{r}g+\langle \nabla_{\zeta}f,J\nabla_{\zeta}g\rangle,
	\end{eqnarray}		
	where $\zeta=(p,q)$ and
    $J=	\begin{pmatrix}
			0 & -1\\ 1 & 0
		\end{pmatrix}$.
Since the space $\mathcal{T}_{\sigma,\mu,\mathcal{D}}^{s,\beta}$ is not closed under Poisson brackets, we introduce the subspace $\mathcal{T}_{\sigma,\mu,\mathcal{D}}^{s,\beta+}\subset \mathcal{T}_{\sigma,\mu,\mathcal{D}}^{s,\beta}$  defined by
	\begin{eqnarray*}
		\mathcal{T}_{\sigma,\mu,\mathcal{D}}^{s,\beta+}:=\big\{g\in \mathcal{T}_{\sigma,\mu,\mathcal{D}}^{s,\beta}~\big|~\partial_{\rho}^{\nu}\nabla_{\zeta}g(\theta,0;\rho)\in Y_{s+\beta},~\partial_{\rho}^{\nu}\nabla_{\zeta}^2g(\theta,0;\rho)\in \mathcal{M}_{s,\beta}^+,~{\nu}=0,1\big\}
	\end{eqnarray*}
	with the norm
	\begin{eqnarray}\label{20243192006}		[g]_{\sigma,\mu,\mathcal{D}}^{s,\beta+}=[g]_{\sigma,\mu,\mathcal{D}}^{s,\beta}+\sup_{{\nu}=0,1\atop \theta\in \mathbb{T}_{\sigma}^n, \rho\in\mathcal{D}}\Big(\mu\|\partial_{\rho}^{\nu}\nabla_{\zeta}g(\theta,0;\rho)\|_{s+\beta}+\mu^2|\partial_{\rho}^{\nu}\nabla_{\zeta}^2g(\theta,0;\rho)|_{s,\beta+}\Big).
	\end{eqnarray}
Based on above, we shall give the estimates of Poisson brackets in the following.

	\begin{lem}\label{20243212055}
		Assume $f=f^T\in \mathcal{T}_{\sigma,\mu,\mathcal{D}}^{s,\beta}$ and $g=g^T\in \mathcal{T}_{\sigma,\mu,\mathcal{D}}^{s,\beta+}$ with $0<\beta\leq \min\{\frac{s}{2},1\}$. Then for $0<\sigma'<\sigma$, we have $\{f,g\}\in \mathcal{T}_{\sigma',\mu,\mathcal{D}}^{s,\beta}$ and
		\begin{eqnarray}\label{20243141513}
			[\{f,g\}]_{\sigma',\mu,\mathcal{D}}^{s,\beta}\leq  \frac{C}{\beta(\sigma-\sigma')\mu^2}[f]_{\sigma,\mu,\mathcal{D}}^{s,\beta}[g]_{\sigma,\mu,\mathcal{D}}^{s,\beta+},
		\end{eqnarray}
where $C>0$ is a constant only depending on $n$ and $s$.
	\end{lem}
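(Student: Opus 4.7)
The plan is to substitute the second-order Taylor forms of $f = f^T$ and $g = g^T$ directly into the Poisson bracket formula \eqref{20243191701} and expand, then estimate term by term. Using $\partial_r f = f_r(\theta)$, $\partial_\zeta f = f_\zeta + f_{\zeta\zeta}\zeta$, and the analogues for $g$ and for $\partial_\theta f$, one sees that $\{f,g\}$ is a polynomial in $(r,\zeta)$ of degree at most one in $r$ and two in $\zeta$, with roughly a dozen summands. I would organize them by their $(r,\zeta)$-degree; crucially, only the three summands that are quadratic in $\zeta$, namely $-\tfrac{1}{2}f_r\langle\zeta,\partial_\theta g_{\zeta\zeta}\zeta\rangle$, $\tfrac{1}{2}\langle\zeta,\partial_\theta f_{\zeta\zeta}\zeta\rangle g_r$ and $\langle f_{\zeta\zeta}\zeta, J g_{\zeta\zeta}\zeta\rangle$, give a nonzero contribution to $\nabla_\zeta^2\{f,g\}$.

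For the sup-bound of $\{f,g\}$ on $\mathcal{O}^s(\sigma',\mu)$, I would use the domain constraints $|r|<\mu^2$ and $\|\zeta\|_s<\mu$, the coefficient estimates \eqref{20243141517-1}--\eqref{2024314151711}, and a Cauchy estimate in $\theta$ on every factor carrying a $\partial_\theta$, which produces the expected $(\sigma-\sigma')^{-1}$ loss. For the $Y_s$-bound of $\nabla_\zeta\{f,g\}$, the analogous strategy handles the terms coming from $-f_r\cdot\partial_\theta g$ and $\partial_\theta f\cdot g_r$. For terms such as $g_{\zeta\zeta}Jf_\zeta$ arising from $\nabla_\zeta\langle f_\zeta, J g_{\zeta\zeta}\zeta\rangle$ and $f_{\zeta\zeta}Jg_{\zeta\zeta}\zeta$ arising from $\nabla_\zeta\langle f_{\zeta\zeta}\zeta, J g_{\zeta\zeta}\zeta\rangle$, the essential tool is Lemma \ref{20243141530-3} applied to $g_{\zeta\zeta}\in\mathcal{M}_{s,\beta}^+$ with $\beta'=0$, which supplies the factor $\frac{C}{\beta}$; the companion term $f_{\zeta\zeta}Jg_\zeta$ is handled instead by the operator bound $\|f_{\zeta\zeta}\|_{\mathcal{L}(Y_s,Y_s)}\leq\mu^{-2}[f]_{\sigma,\mu,\mathcal{D}}^{s,\beta}$ from \eqref{2024314151711} together with $\|g_\zeta\|_s\leq\|g_\zeta\|_{s+\beta}\leq\mu^{-1}[g]_{\sigma,\mu,\mathcal{D}}^{s,\beta+}$.

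The core of the proof is the bound on $\nabla_\zeta^2\{f,g\}$ in $\mathcal{M}_{s,\beta}$. Using symmetry of the Hessians and $J^t=-J$, the Hessian of $\langle f_{\zeta\zeta}\zeta, J g_{\zeta\zeta}\zeta\rangle$ equals $f_{\zeta\zeta}Jg_{\zeta\zeta} - g_{\zeta\zeta}Jf_{\zeta\zeta}$, which is exactly a product of a matrix in $\mathcal{M}_{s,\beta}$ and a matrix in $\mathcal{M}_{s,\beta}^+$; Lemma \ref{20243141530-1} bounds it in $\mathcal{M}_{s,\beta}$ with a factor $\frac{C}{\beta}$, and this is the source of the $\beta^{-1}$ in the statement. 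The Hessians of the other two quadratic-in-$\zeta$ pieces are vector contractions of $\partial_\theta g_{\zeta\zeta}$ and $\partial_\theta f_{\zeta\zeta}$, which are controlled directly by the embedding $\mathcal{M}_{s,\beta}^+\subset\mathcal{M}_{s,\beta}$ together with a Cauchy estimate in $\theta$. All the $\partial_\rho^\nu$-derivatives propagate through by the product rule without additional loss. The main obstacle is purely bookkeeping: tracking the dozen summands through three different norms, and identifying at each step exactly which product estimate (Lemma \ref{20243141530-1} for matrix-matrix, Lemma \ref{20243141530-3} for matrix-vector) is responsible for the $\beta^{-1}$ loss that appears in the final constant.
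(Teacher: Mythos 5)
Your decomposition, identification of the three $\zeta$-quadratic contributions to the Hessian, and use of Lemma \ref{20243141530-1} for $f_{\zeta\zeta}Jg_{\zeta\zeta}-g_{\zeta\zeta}Jf_{\zeta\zeta}$ as the sole source of the $\beta^{-1}$ factor are exactly the paper's proof (which labels the summands $h_1,\dots,h_6$). The only minor deviation is your use of Lemma \ref{20243141530-3} with $\beta'=0$ to bound $g_{\zeta\zeta}Jf_\zeta$ in $Y_s$, which needlessly introduces an extra $\beta^{-1}$ and also presupposes $\beta\le\tfrac12$ even though this lemma's hypothesis allows $\beta\le\min\{\tfrac{s}{2},1\}$; the paper avoids both by using the simpler operator bound \eqref{2024314151711}, $\|g_{\zeta\zeta}\|_{\mathcal{L}(Y_s,Y_s)}\le\mu^{-2}[g]_{\sigma,\mu,\mathcal{D}}^{s,\beta}$, which you already invoke for $f_{\zeta\zeta}$.
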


	\begin{proof}
According to \eqref{2024531000}, write
$$f=f_{\theta}+\langle f_{r},r\rangle+\langle f_{\zeta},\zeta\rangle+\frac{1}{2}\langle \zeta,f_{\zeta\zeta}\zeta\rangle,$$
$$g=g_{\theta}+\langle g_{r},r\rangle+\langle g_{\zeta},\zeta\rangle+\frac{1}{2}\langle \zeta,g_{\zeta\zeta}\zeta\rangle,$$
where $f_{\theta}$, $f_r$, $f_{\zeta}$, $f_{\zeta\zeta}$, $g_{\theta}$, $g_r$, $g_{\zeta}$, $g_{\zeta\zeta}$ are independent of $r,\zeta$. Then in view of \eqref{20243191701}, by direct calculation we have $\nabla_{r}f=f_r$, $\nabla_{r}g=g_r$, $\nabla_{\zeta}f=f_{\zeta}+f_{\zeta\zeta}\zeta$, $\nabla_{\zeta}g=g_{\zeta}+g_{\zeta\zeta}\zeta$ and thus
\begin{eqnarray}\label{20240612-1}
\{f,g\}=-\langle f_{r},\nabla_{\theta}g\rangle+\langle \nabla_{\theta}f,g_{r}\rangle+\langle f_{\zeta},Jg_{\zeta}\rangle-\langle {g_{\zeta\zeta}}Jf_{\zeta},\zeta\rangle+\langle \zeta,{f_{\zeta\zeta}}Jg_{\zeta}\rangle+\langle \zeta,{f_{\zeta\zeta}}Jg_{\zeta\zeta}\zeta\rangle.
\end{eqnarray}
For simplicity, let $h_1,h_2,h_3,h_4,h_5,h_6$ be the right side of (\ref{20240612-1}) successively.

Firstly, we consider $h_1$ and $h_2$. By (\ref{202431415171}) and Cauchy estimates, we get
\begin{eqnarray}\label{20240612-2}			[h_1]_{\sigma',\mu,\mathcal{D}}^{s,\beta},\,\,\, [h_2]_{\sigma',\mu,\mathcal{D}}^{s,\beta}\leq \frac{C}{(\sigma-\sigma')\mu^2}[f]_{\sigma,\mu,\mathcal{D}}^{s,\beta}[g]_{\sigma,\mu,\mathcal{D}}^{s,\beta}.
\end{eqnarray}

Secondly, we consider $h_3:=\langle f_{\zeta},Jg_{\zeta}\rangle$. Notice that $\nabla_{\zeta}h_3=\nabla_{\zeta}^2h_3=0$. Thus, by \eqref{20243141517-2}, we get
\begin{eqnarray}\label{20240612-3}
[h_3]_{\sigma,\mu,\mathcal{D}}^{s,\beta}=\sup_{{\nu}=0,1,\rho\in \mathcal{D}\atop (\theta,r,\zeta)\in\mathcal{O}^s(\sigma,\mu)}|\partial_{\rho}^{\nu}h_3|\leq \frac{C}{\mu^2}[f]_{\sigma,\mu,\mathcal{D}}^{s,\beta}[g]_{\sigma,\mu,\mathcal{D}}^{s,\beta}.
\end{eqnarray}

Thirdly, we consider $h_4:=-\langle {g_{\zeta\zeta}}Jf_{\zeta},\zeta\rangle$ and $h_5:=\langle \zeta,{f_{\zeta\zeta}}Jg_{\zeta}\rangle$. Notice that $\nabla_{\zeta}h_4=-{g_{\zeta\zeta}}Jf_{\zeta}$, $\nabla_{\zeta}h_5={f_{\zeta\zeta}}Jg_{\zeta}$ and $\nabla_{\zeta}^2h_4=\nabla_{\zeta}^2h_5=0$. Thus, by \eqref{20243141517-2}, \eqref{2024314151711} and $\|\zeta\|_s\leq\mu$, we get
\begin{eqnarray}\label{20240612-4}
[h_4]_{\sigma,\mu,\mathcal{D}}^{s,\beta}=\sup_{{\nu}=0,1,\rho\in \mathcal{D}\atop (\theta,r,\zeta)\in\mathcal{O}^s(\sigma,\mu)}\max\big\{|\partial_{\rho}^{\nu}h_4|,\mu\|\partial_{\rho}^{\nu}\nabla_{\zeta}h_4\|_s\big\}
\leq\frac{C}{\mu^2}[f]_{\sigma,\mu,\mathcal{D}}^{s,\beta}[g]_{\sigma,\mu,\mathcal{D}}^{s,\beta},
\end{eqnarray}
\begin{eqnarray}\label{20240612-5}
[h_5]_{\sigma,\mu,\mathcal{D}}^{s,\beta}=\sup_{{\nu}=0,1,\rho\in \mathcal{D}\atop (\theta,r,\zeta)\in\mathcal{O}^s(\sigma,\mu)}\max\big\{|\partial_{\rho}^{\nu}h_5|,\mu\|\partial_{\rho}^{\nu}\nabla_{\zeta}h_5\|_s\big\}
\leq\frac{C}{\mu^2}[f]_{\sigma,\mu,\mathcal{D}}^{s,\beta}[g]_{\sigma,\mu,\mathcal{D}}^{s,\beta}.
\end{eqnarray}

Finally, we consider $h_6:=\langle \zeta,{f_{\zeta\zeta}}Jg_{\zeta\zeta}\zeta\rangle$. Notice that $$\nabla_{\zeta}h_6={f_{\zeta\zeta}}Jg_{\zeta\zeta}\zeta-{g_{\zeta\zeta}}Jf_{\zeta\zeta}\zeta,$$ $$\nabla_{\zeta}^2h_6={f_{\zeta\zeta}}Jg_{\zeta\zeta}-{g_{\zeta\zeta}}Jf_{\zeta\zeta}.$$ %
By \eqref{2024314151711} and $\|\zeta\|_s\leq\mu$, we get
\begin{eqnarray}\label{20240612-6}
\sup_{{\nu}=0,1}|\partial_{\rho}^{\nu}h_6|\leq\frac{C}{\mu^2}[f]_{\sigma,\mu,\mathcal{D}}^{s,\beta}[g]_{\sigma,\mu,\mathcal{D}}^{s,\beta},
\end{eqnarray}
\begin{eqnarray}\label{20240612-7}
\sup_{{\nu}=0,1}\|\partial_{\rho}^{\nu}\nabla_{\zeta}h_6\|_s\leq\frac{C}{\mu^3}[f]_{\sigma,\mu,\mathcal{D}}^{s,\beta}[g]_{\sigma,\mu,\mathcal{D}}^{s,\beta};
\end{eqnarray}
by Lemma \ref{20243141530-1}, we get
\begin{eqnarray}\label{20240612-8}	
|\nabla_{\zeta}^2h_6|_{s,\beta}\leq \frac{C}{\beta}|f_{\zeta\zeta}|_{s,\beta}|g_{\zeta\zeta}|_{s,\beta+}\leq
\frac{C}{\beta\mu^4}[f]_{\sigma,\mu,\mathcal{D}}^{s,\beta}[g]_{\sigma,\mu,\mathcal{D}}^{s,\beta+},
\end{eqnarray}
\begin{eqnarray}\label{20240612-9}	|\partial_{\rho}\nabla_{\zeta}^2h_6|_{s,\beta}\leq  \frac{C}{\beta}\Big(|\partial_{\rho}f_{\zeta\zeta}|_{s,\beta}|g_{\zeta\zeta}|_{s,\beta+}+|f_{\zeta\zeta}|_{s,\beta}|\partial_{\rho}g_{\zeta\zeta}|_{s,\beta+}\Big)\leq
\frac{C}{\beta\mu^4}[f]_{\sigma,\mu,\mathcal{D}}^{s,\beta}[g]_{\sigma,\mu,\mathcal{D}}^{s,\beta+}.
\end{eqnarray}
Thus, by \eqref{20240612-6}-\eqref{20240612-9}, we get
\begin{eqnarray}\label{20240612-10}
[h_6]_{\sigma,\mu,\mathcal{D}}^{s,\beta}\leq\frac{C}{\beta\mu^2}[f]_{\sigma,\mu,\mathcal{D}}^{s,\beta}[g]_{\sigma,\mu,\mathcal{D}}^{s,\beta+}.
\end{eqnarray}

Summing the estimates \eqref{20240612-2}-\eqref{20240612-5} and \eqref{20240612-10}, we get \eqref{20243141513}.
	\end{proof}

\subsection{Hamiltonian flow}
In this subsection, we assume $g\equiv g^T$, namely,
	\begin{eqnarray}\label{2024351735}
		g=g_{\theta}(\theta;\rho)+\langle g_r(\theta;\rho), r\rangle+\langle g_{\zeta}(\theta;\rho),\zeta\rangle+\frac{1}{2}\langle \zeta,g_{\zeta\zeta}(\theta;\rho)\zeta\rangle.
	\end{eqnarray}
	Then the  Hamiltonian associated with $g$ have the following form:
	\begin{eqnarray}\label{2024351730}
		\begin{cases}			
\dot{\theta}(t)=g_r(\theta;\rho)\\
\dot{r}(t)=-\nabla_{\theta}g(\theta,r,\zeta;\rho)\\
\dot{\zeta}(t)=J(g_{\zeta}(\theta;\rho)+g_{\zeta\zeta}(\theta;\rho)\zeta).
		\end{cases}
	\end{eqnarray}
	In the following, we shall study the Hamiltonian flows $\Phi_g^t$ generated by (\ref{2024351730}).			

	\begin{lem}\label{20243181426}
Let $0<\beta'<\beta\leq\min\{\frac{s}{2},\frac{1}{2}\}$,
$0<\sigma'<\sigma\leq1$,
$0<\mu'<\mu\leq1$
and $g=g^T\in\mathcal{T}^{s,\beta+}_{\sigma,\mu,\mathcal{D}}$ satisfy
		\begin{eqnarray}\label{20243141942}
			[g]_{\sigma,\mu,\mathcal{D}}^{s,\beta+}	\leq \frac{(\beta-\beta')(\sigma-\sigma')(\mu-\mu')^2}{c_1},
		\end{eqnarray}
where $c_1>1$ is a properly large constant only depending on $n$ and $s$. Then for $0\leq t\leq 1$ the flow maps
$$\Phi_g^t: \mathcal{O}^s(\sigma',\mu')\ni (\theta^0,r^0,\zeta^0)\mapsto (\theta(t),r(t),\zeta(t))\in\mathcal{O}^s(\frac{\sigma+\sigma'}{2},\frac{\mu+\mu'}{2})$$ are of the form (here hide the parameter $\rho$):
		\begin{eqnarray}
			{\Phi}_g^t : \begin{pmatrix}
				\theta^0\\r^0\\\zeta^0
			\end{pmatrix}
			\to	\begin{pmatrix}				K(\theta^0;t)\\L(\theta^0,\zeta^0;t)+V(\theta^0;t)r^0\\T(\theta^0;t)+U(\theta^0;t)\zeta^0
			\end{pmatrix},
		\end{eqnarray}
where the mappings $K,T$ and the operators $V,U$ analytically depend on $\theta^0\in \mathbb{T}_{\sigma'}^n$, and the mapping $L$ analytically depends on $(\theta^0,\zeta^0)\in\mathbb{T}_{\sigma'}^n\times\mathcal{O}_{\mu'}(Y_s)$ with $\mathcal{O}_{\mu'}(Y_s):=\{\zeta\in Y_s : \|\zeta\|_s\leq \mu'\}$. More precisely, there hold the following estimates:

		(1)  For any $\theta^0\in\mathbb{T}_{\sigma'}^n$, we have
		\begin{eqnarray}			
\sup_{\nu=0,1}\|\partial_{\rho}^{\nu}(V(\theta^0;t)-I)\|_{\mathcal{L}(\mathbb{C}^n,\mathbb{C}^n)}&\leq&16(\sigma-\sigma')^{-1}\mu^{-2}[g]_{\sigma,\mu,\mathcal{D}}^{s,\beta},\label{20245201706}\\
\sup_{\nu=0,1}\|\partial_{\rho}^{\nu}({}^tU(\theta^0;t)-I)\|_{\mathcal{L}(Y_s,Y_{s+\beta'})}&\leq&\frac{2^{s+7}\mu^{-2}}{\beta-\beta'}[g]_{\sigma,\mu,\mathcal{D}}^{s,\beta+},\label{20245211529}\\
\sup_{\nu=0,1}\|\partial_{\rho}^{\nu}(U(\theta^0;t)-I)\|_{\mathcal{L}(Y_s,Y_{s+\beta'})}&\leq&\frac{2^{s+7}\mu^{-2}}{\beta-\beta'}[g]_{\sigma,\mu,\mathcal{D}}^{s,\beta+},\label{202431721021}\\
\sup_{\nu=0,1}|\partial_{\rho}^{\nu}(U(\theta^0;t)-I)|_{s,\beta+}&\leq&8\mu^{-2}[g]_{\sigma,\mu,\mathcal{D}}^{s,\beta+};\label{20245201521}
		\end{eqnarray}
for any $(\theta^0,\zeta^0)\in\mathbb{T}_{\sigma'}^n\times\mathcal{O}_{\mu'}(Y_s)$ and any component $L^b$ of $L$, $b=1,2,\cdots,n$, we have
		\begin{eqnarray}			\sup_{{\nu}=0,1}\|\partial_{\rho}^{\nu}\nabla_{\zeta^0}L^b(\theta^0,\zeta^0;t)\|_{s+\beta'}&\leq& \frac{2^{s+9}}{\beta-\beta'}(\sigma-\sigma')^{-1}{\mu}^{-1}[g]_{\sigma,\mu,\mathcal{D}}^{s,\beta+},\label{20243172102}\\
\sup_{{\nu}=0,1}|\partial_{\rho}^{\nu}\nabla_{\zeta^0}^2L^b(\theta^0,\zeta^0;t)|_{s,\beta+}&\leq& 32(\sigma-\sigma')^{-1}\mu^{-2}[g]_{\sigma,\mu,\mathcal{D}}^{s,\beta+}.\label{2024317210211}
		\end{eqnarray}

		(2) The flow maps $\Phi_g^t$ analytically extend to mappings
		\begin{eqnarray*}
\mathbb{T}_{\sigma'}^n\times\mathbb{C}^n\times Y_s\ni (\theta^0,r^0,\zeta^0)\mapsto(\theta(t),r(t),\zeta(t))\in \mathbb{T}_{\sigma}^n\times\mathbb{C}^n\times Y_s,
		\end{eqnarray*}
		which satisfy
		\begin{eqnarray}\label{20243172034}
			\begin{aligned}
&|\theta(t)-\theta^0|\leq\mu^{-2}[g]_{\sigma,\mu,\mathcal{D}}^{s,\beta},\\
&|r(t)-r^0|\leq 8({\sigma-\sigma'})^{-1}\big(1+\mu^{-1}\|\zeta^0\|_s+\mu^{-2}|r^0|+\mu^{-2}\|\zeta^0\|_{s}^2\big)[g]_{\sigma,\mu,\mathcal{D}}^{s,\beta},\\
&\|\zeta(t)-\zeta^0\|_{s+\beta'}\leq \Big(2\mu^{-1}+\frac{2^{s+5}\mu^{-2}}{\beta-\beta'}\|\zeta^0\|_s\Big)[g]_{\sigma,\mu,\mathcal{D}}^{s,\beta+}
			\end{aligned}
		\end{eqnarray}
		and
		\begin{eqnarray}\label{20245212009}
			\begin{aligned}
&|\partial_{\rho}\theta(t)|\leq 2\mu^{-2}[g]_{\sigma,\mu,\mathcal{D}}^{s,\beta},\\
&|\partial_{\rho}r(t)|\leq 32({\sigma-\sigma'})^{-1}\big(1+\mu^{-1}\|\zeta^0\|_s+\mu^{-2}|r^0|+\mu^{-2}\|\zeta^0\|_{s}^2\big)[g]_{\sigma,\mu,\mathcal{D}}^{s,\beta},\\
&\|\partial_{\rho}\zeta(t)\|_{s+\beta'}\leq \Big(8\mu^{-1}+\frac{2^{s+7}\mu^{-2}}{\beta-\beta'}\|\zeta^0\|_s\Big)[g]_{\sigma,\mu,\mathcal{D}}^{s,\beta+}.
			\end{aligned}
		\end{eqnarray}
	\end{lem}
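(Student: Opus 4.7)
The plan is to exploit that $g=g^T$ is quadratic in $(r,\zeta)$, which makes the Hamiltonian system \eqref{2024351730} triangular. The $\theta$-equation $\dot\theta = g_r(\theta;\rho)$ decouples and is solved first, giving $K(\theta^0;t)$. Once $K$ is known, the $\zeta$-equation becomes the linear inhomogeneous ODE $\dot\zeta = J g_{\zeta\zeta}(K(\theta^0;t);\rho)\zeta + J g_\zeta(K(\theta^0;t);\rho)$, whose solution is necessarily of the form $T(\theta^0;t) + U(\theta^0;t)\zeta^0$ with $U$ the fundamental matrix and $T$ the particular solution vanishing at $t=0$. Finally, since $\nabla_\theta g$ is affine in $r$, the $r$-equation becomes affine in $r^0$ once $K$ and $\zeta(t)$ are substituted, yielding $L(\theta^0,\zeta^0;t) + V(\theta^0;t)r^0$. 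The extension statement in item (2) is then immediate: only $\theta^0$ enters the nonlinear ODE, so the constraint $\theta^0\in\mathbb{T}^n_{\sigma'}$ is all that is needed while $(r^0,\zeta^0)$ may range over $\mathbb{C}^n\times Y_s$.

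For the $\theta$-flow I use the integral form $\theta(t)-\theta^0 = \int_0^t g_r(\theta(\tau);\rho)\,d\tau$, combine the bound $|g_r|\le \mu^{-2}[g]_{\sigma,\mu,\mathcal{D}}^{s,\beta}$ from \eqref{202431415171} with the smallness hypothesis \eqref{20243141942} to run a standard contraction keeping $\theta(t)$ inside $\mathbb{T}^n_{(\sigma+\sigma')/2}$, and then obtain the first line of \eqref{20243172034}; the $\rho$-derivative bound comes from differentiating the integral equation and applying Gronwall. For the $\zeta$-block I write $U(\theta^0;t) = I + \int_0^t Jg_{\zeta\zeta}(K(\theta^0;\tau);\rho)\,U(\theta^0;\tau)\,d\tau$ and iterate in the $|\cdot|_{s,\beta+}$ norm; Lemma \ref{20243141530-2} (closure of $\mathcal{M}^+_{s,\beta}$ under multiplication) makes the Picard iteration converge geometrically and yields \eqref{20245201521}. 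For the operator-norm estimates \eqref{20245211529}--\eqref{202431721021} I apply Lemma \ref{20243141530-3} to each $g_{\zeta\zeta}\cdot U$ pairing, which converts an $\mathcal{M}^+_{s,\beta}$ factor into a bounded operator $Y_s\to Y_{s+\beta'}$ at the cost of the factor $(\beta-\beta')^{-1}$ in the stated bound; the same reasoning applied to ${}^tU$ handles \eqref{20245211529}. The particular solution $T$ obeys the same integral equation with forcing $Jg_\zeta(K(\theta^0;\tau);\rho)\in Y_{s+\beta}$ guaranteed by \eqref{20243192006}, and its bound contributes to the third line of \eqref{20243172034}.

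For the $r$-flow I expand
\[
-\nabla_\theta g = -\nabla_\theta g_\theta - \langle\nabla_\theta g_r,r\rangle - \langle\nabla_\theta g_\zeta,\zeta\rangle - \tfrac12\langle\zeta,\nabla_\theta g_{\zeta\zeta}\,\zeta\rangle,
\]
substitute $\zeta(t)=T(\theta^0;t)+U(\theta^0;t)\zeta^0$, and solve by variation of constants; the linear-in-$r$ coefficient produces $V$, and the remaining terms collect into $L$. Cauchy estimates in $\theta$ on the strip width $\sigma-\sigma'$ supply the factor $(\sigma-\sigma')^{-1}$ that appears in \eqref{20245201706}, \eqref{20243172102}, \eqref{2024317210211} and in the second lines of \eqref{20243172034}--\eqref{20245212009}. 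The Hessian $\nabla_{\zeta^0}^2 L^b$ reduces, up to a time integral, to a product of the form ${}^tU\cdot\nabla_\theta g_{\zeta\zeta}\cdot U$, so Lemma \ref{20243141530-2} keeps it in $\mathcal{M}^+_{s,\beta+}$ without losing regularity, which is why \eqref{2024317210211} is stated with $\beta$ rather than $\beta'$. The $\rho$-derivative bounds in \eqref{20245212009} are obtained by differentiating the integral equations for $\theta,r,\zeta$ and invoking the already-established $\nu=0$ bounds on $K,U,T,V,L$.

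The main technical hurdle is controlling $U$ simultaneously in two distinct norms: the Picard iteration for $U-I$ closes cleanly in the $|\cdot|_{s,\beta+}$ norm via Lemma \ref{20243141530-2} without any loss of $\beta$, but the operator-norm estimates in $\mathcal{L}(Y_s,Y_{s+\beta'})$ must pass through Lemma \ref{20243141530-3} and therefore lose regularity from $\beta$ down to $\beta'$. This loss is precisely what forces $\beta'<\beta$ to appear in the hypotheses, produces the ubiquitous factor $(\beta-\beta')^{-1}$, and is the mechanism by which the parameter $\beta$ will be shrunk between successive KAM steps. Orchestrating both estimates consistently --- so that $\Phi_g^t-\mathrm{id}$ still carries the regularizing effect $Y_s\to Y_{s+\beta'}$ demanded by the next iteration --- is the crux of the argument.
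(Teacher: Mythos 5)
Your proposal is correct and follows essentially the same route as the paper: exploit the triangular structure of \eqref{2024351730} (solve $\theta$ first, then $\zeta$ via a Dyson/Picard series giving $T+U\zeta^0$, then $r$ by variation of constants giving $L+Vr^0$), run the $|\cdot|_{s,\beta+}$ estimate for $U-I$ through Lemma \ref{20243141530-2} with no loss of $\beta$, pass to $\mathcal{L}(Y_s,Y_{s+\beta'})$ via Lemma \ref{20243141530-3} paying the $(\beta-\beta')^{-1}$ factor, and use Cauchy estimates in $\theta$ to produce the $(\sigma-\sigma')^{-1}$ factors for the $r$-block. You also correctly identify the reason \eqref{2024317210211} is stated with the $\beta+$ norm rather than $\beta'$: $\nabla^2_{\zeta^0}L^b$ is (up to a time integral) ${}^tU\,\nabla_\theta g_{\zeta\zeta}\,U$, which stays in $\mathcal{M}^+_{s,\beta}$ by Lemma \ref{20243141530-2}, matching the paper's \eqref{20243251525}--\eqref{20240701-7}.
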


	\begin{proof}
Since $g_r$ is independent of $r,\zeta$, we firstly consider the equation \eqref{2024351730}$_1$ for $\theta(t)$. By \eqref{202431415171} and \eqref{20243141942}, we get
\begin{eqnarray}\label{20240616-1}
\sup_{\theta\in\mathbb{T}_{\sigma}^n,\rho\in\mathcal{D}}|g_{r}(\theta;\rho)|
\leq \mu^{-2}[g]_{\sigma,\mu,\mathcal{D}}^{s,\beta}<\frac{\sigma-\sigma'}{2},
\end{eqnarray}
which implies $\mathbb{T}_{\sigma'}^n\ni\theta^0\mapsto\theta(t)\in\mathbb{T}_{\frac{\sigma+\sigma'}{2}}^n$
and the first estimate of \eqref{20243172034} for $\theta(t)-\theta^0$. Differentiating \eqref{2024351730}$_1$ with respect to $\rho$, we get
\begin{eqnarray}\label{20240618-1}
\partial_{\rho}\dot{\theta}(t)=\partial_{\rho}g_{r}(\theta;\rho)+	\nabla_{\theta}g_{r}(\theta;\rho)\partial_{\rho}\theta(t),~~~\partial_{\rho}\theta(0)=0.
\end{eqnarray}
By \eqref{202431415171} we get
\begin{eqnarray}\label{20240618-2}
		\sup_{\theta\in\mathbb{T}_{\sigma}^n,\rho\in \mathcal{D}}|\partial_{\rho}g_{r}(\theta;\rho)|\leq \mu^{-2}[g]_{\sigma,\mu,\mathcal{D}}^{s,\beta}.
\end{eqnarray}
In view of \eqref{20240616-1}, by Cauchy estimate, we get
\begin{eqnarray}\label{20240618-3}
\sup_{\theta\in\mathbb{T}_{\frac{\sigma+\sigma'}{2}}^n,\rho\in\mathcal{D}}|\nabla_{\theta}g_{r}(\theta;\rho)|
\leq2(\sigma-\sigma')^{-1}\mu^{-2}[g]_{\sigma,\mu,\mathcal{D}}^{s,\beta}.
\end{eqnarray}
Then, by \eqref{20243141942}, \eqref{20240618-1}-\eqref{20240618-3} and Gronwall inequality, we get the first estimate of \eqref{20245212009} for $\partial_{\rho}\theta(t)$.

Next, we consider the equation \eqref{2024351730}$_3$ for $\zeta(t)$. It reads
		\begin{eqnarray}\label{20243141955}
			\dot{\zeta}(t)=a(t)+B(t)\zeta(t),~~~\zeta(0)=\zeta^0\in \mathcal{O}_{\mu'}(Y_s),
		\end{eqnarray}
		where $a(t):=Jg_{\zeta}\big(\theta(t);\rho\big)$ and $B(t):=Jg_{\zeta\zeta}\big(\theta(t);\rho\big)$ analytically depend on $\theta^0$.
Since $g\in\mathcal{T}^{s,\beta+}_{\sigma,\mu,\mathcal{D}}$, we have $a(t)\in Y_s$ and $B(t)\in\mathcal{M}_{s,\beta}^+$.  By (\ref{20243192006}), we get
		\begin{eqnarray}\label{20243171704}
			\|a(t)\|_{s+\beta}\leq \mu^{-1}[g]_{\sigma,\mu,\mathcal{D}}^{s,\beta+},
		\end{eqnarray}
and by (\ref{20243192006}), (\ref{20243141942}) and Lemma \ref{20243141530-3}, we get
		\begin{eqnarray}\label{20243142009}
			\|B(t)\|_{\mathcal{L}(Y_s,Y_{s+\beta'})}\leq \frac{2^{s+4}}{\beta-\beta'}|B(t)|_{s,\beta+}\leq  \frac{2^{s+4}}{\beta-\beta'}\mu^{-2}[g]_{\sigma,\mu,\mathcal{D}}^{s,\beta+}\leq \frac{1}{2},
		\end{eqnarray}
which implies
		\begin{eqnarray}\label{20245202044}
			\|B(t)\|_{\mathcal{L}(Y_s,Y_{s})}\leq \frac{1}{2}.
		\end{eqnarray}
According to (\ref{20243141955}), we have  $\zeta(t)=\zeta^0+\int_{0}^{t}(a(t')+B(t')\zeta(t'))dt'$. Iterating this relation, we can get
		\begin{eqnarray}\label{20243172033}
			\zeta(t)=a^{\infty}(t)+(I+B^{\infty}(t))\zeta^0,
		\end{eqnarray}
		where
		\begin{eqnarray}\label{20243171707}
			a^{\infty}(t)=\int_{0}^{t}a(t_1)dt_1+\sum_{k\geq 2}\int_{0}^{t}\int_{0}^{t_1}\cdots\int_{0}^{t_{k-1}}\prod_{j=1}^{k-1}B(t_j)a(t_k)dt_k\cdots dt_2dt_1
		\end{eqnarray}
		and
		\begin{eqnarray}\label{20243171708}
			B^{\infty}(t)=\sum_{k\geq 1}\int_{0}^{t}\int_{0}^{t_1}\cdots\int_{0}^{t_{k-1}}\prod_{j=1}^{k}B(t_j)dt_k\cdots dt_2dt_1.
		\end{eqnarray}
		For $k\geq 1$ and $0\leq  t_k\leq\cdots\leq t_1 \leq 1$, by (\ref{20243142009}) and (\ref{20245202044}),  we have
		\begin{eqnarray}\label{2024531200}
			\|B(t_1)\cdots B(t_k)\|_{\mathcal{L}(Y_s,Y_{s+\beta'})}\leq \Big(\frac{1}{2}\Big)^{k-1} \frac{2^{s+4}}{\beta-\beta'}\mu^{-2}[g]_{\sigma,\mu,\mathcal{D}}^{s,\beta+}\leq \Big(\frac{1}{2}\Big)^{k}.
		\end{eqnarray}
		Then in view of (\ref{20243141942}),(\ref{20243171704}) and (\ref{2024531200}), the series (\ref{20243171707}) and (\ref{20243171708}) uniformly converge in $t\in [0,1]$ and $\theta^0\in \mathbb{T}_{\sigma'}^n$ with
		\begin{eqnarray}\label{20245211906}	
	    \begin{aligned}
\|a^{\infty}(t)\|_{s+\beta'}&\leq\mu^{-1}[g]_{\sigma,\mu,\mathcal{D}}^{s,\beta+}+\sum_{k\geq2}\Big(\frac{1}{2}\Big)^{k-1}\mu^{-1}[g]_{\sigma,\mu,\mathcal{D}}^{s,\beta+}\\
&=2\mu^{-1}[g]_{\sigma,\mu,\mathcal{D}}^{s,\beta+}
        \end{aligned}
		\end{eqnarray}
		and
		\begin{eqnarray}\label{20245211907}
        \begin{aligned}
\|B^{\infty}(t)\|_{\mathcal{L}(Y_s,Y_{s+\beta'})}&\leq\sum_{k\geq1}\Big(\frac{1}{2}\Big)^{k-1} \frac{2^{s+4}}{\beta-\beta'}\mu^{-2}[g]_{\sigma,\mu,\mathcal{D}}^{s,\beta+}\\
&=\frac{2^{s+5}}{\beta-\beta'}{\mu^{-2}}[g]_{\sigma,\mu,\mathcal{D}}^{s,\beta+}.
        \end{aligned}
		\end{eqnarray}
Putting (\ref{20245211906}) and (\ref{20245211907}) into (\ref{20243172033}), it is easy to verify $\zeta(t)-\zeta^0$ fulfills the third estimate of (\ref{20243172034}).

In the following, we estimate the derivative of $\zeta(t)$ with respect to $\rho$. In view of (\ref{20243192006}), (\ref{20243141942}) and \eqref{20245212009}$_1$, by Cauchy estimates and Lemma \ref{20243141530-3}, we get
		\begin{eqnarray}\label{20240620-1}
			\|\partial_{\rho}a(t)\|_{s+\beta}\leq 2\mu^{-1}[g]_{\sigma,\mu,\mathcal{D}}^{s,\beta+},
		\end{eqnarray}
		\begin{eqnarray}\label{20240620-2}
			\|\partial_{\rho}B(t)\|_{\mathcal{L}(Y_s,Y_{s+\beta'})}\leq \frac{2^{s+5}}{\beta-\beta'}\mu^{-2}[g]_{\sigma,\mu,\mathcal{D}}^{s,\beta+}.
		\end{eqnarray}
Then differentiating \eqref{20243171707} and \eqref{20243171708} with respect to $\rho$, we get
\begin{eqnarray}\label{20240525-1}	
	    \begin{aligned}
\|\partial_{\rho}a^{\infty}(t)\|_{s+\beta'}&\leq2\mu^{-1}[g]_{\sigma,\mu,\mathcal{D}}^{s,\beta+}+\sum_{k\geq2}k\Big(\frac{1}{2}\Big)^{k-1}2\mu^{-1}[g]_{\sigma,\mu,\mathcal{D}}^{s,\beta+}\\
&=8\mu^{-1}[g]_{\sigma,\mu,\mathcal{D}}^{s,\beta+}
        \end{aligned}
		\end{eqnarray}
and
		\begin{eqnarray}\label{20240625-2}
        \begin{aligned}
\|\partial_{\rho}B^{\infty}(t)\|_{\mathcal{L}(Y_s,Y_{s+\beta'})}&\leq\sum_{k\geq1}k\Big(\frac{1}{2}\Big)^{k-1} \frac{2^{s+5}}{\beta-\beta'}\mu^{-2}[g]_{\sigma,\mu,\mathcal{D}}^{s,\beta+}\\
&=\frac{2^{s+7}}{\beta-\beta'}{\mu^{-2}}[g]_{\sigma,\mu,\mathcal{D}}^{s,\beta+},
        \end{aligned}
		\end{eqnarray}
from which the third estimate of (\ref{20245212009}) for $\partial_{\rho}\zeta(t)$ is derived.

Moreover, by (\ref{20243192006}), we have
		\begin{eqnarray}\label{2024531336}
			|B(t)|_{s,\beta+}\leq \mu^{-2}[g]_{\sigma,\mu,\mathcal{D}}^{s,\beta+}.
		\end{eqnarray}
Then, by (\ref{20243141942}),(\ref{2024531336}) and lemma \ref{20243141530-2}, we have
		\begin{eqnarray}
			\begin{aligned}
				|B(t_1)\cdots B(t_k)|_{s,\beta+}&\leq \Big(2^{\frac{s}{2}+3}{\beta}^{-1}\mu^{-2}[g]_{\sigma,\mu,\mathcal{D}}^{s,\beta+}\Big)^{k-1} \mu^{-2}[g]_{\sigma,\mu,\mathcal{D}}^{s,\beta+}\\&\leq \Big(\frac{1}{2}\Big)^{k-1}\mu^{-2}[g]_{\sigma,\mu,\mathcal{D}}^{s,\beta+}
			\end{aligned}
		\end{eqnarray}
		and thus
		\begin{eqnarray}\label{20243172103}
			|B^{\infty}(t)|_{s,\beta+}\leq 2\mu^{-2}[g]_{\sigma,\mu,\mathcal{D}}^{s,\beta+}.
		\end{eqnarray}
In the same manner, we deduce 		
\begin{eqnarray}\label{2024522252}
				\begin{aligned}
				|\partial_{\rho}B^{\infty}(t)|_{s,\beta+}\leq 8\mu^{-2}[g]_{\sigma,\mu,\mathcal{D}}^{s,\beta+}.
				\end{aligned}
			\end{eqnarray}
Due to $U=I+B^{\infty}$, the estimates of $U$ in (\ref{20245211529})- (\ref{20245201521}) follow from (\ref{20245211907}), (\ref{20240625-2}), (\ref{20243172103}) and (\ref{2024522252}).

Finally, we consider the equation \eqref{2024351730}$_2$ for $r(t)$. It reads
			\begin{eqnarray}\label{20245212025}
				\dot{r}(t)=\alpha(t)+\Lambda(t)r(t),~~~r(0)=r^0\in \mathcal{O}_{{(\mu')}^2}(\mathbb{C}^n),
			\end{eqnarray}
where $\mathcal{O}_{(\mu')^2}(\mathbb{C}^n)=\{r\in\mathbb{C}^n:|r|\leq{(\mu')}^2\}$, $\Lambda(t)=-\nabla_{\theta}g_r\big(\theta(t);\rho\big)$, and
\begin{eqnarray*}
\alpha(t)=-\nabla_{\theta}g_{\theta}\big(\theta(t);\rho\big)-\langle \nabla_{\theta}g_{\zeta}\big(\theta(t);\rho\big),\zeta(t)\rangle-\frac{1}{2}\langle \zeta(t),\nabla_{\theta}g_{\zeta\zeta}\big(\theta(t);\rho\big)\zeta(t)\rangle.
\end{eqnarray*}
By (\ref{202431415171}), (\ref{20243141942}) and Cauchy estimate, we get
\begin{eqnarray}\label{20243172223}
\|\Lambda (t)\|_{\mathcal{L}(\mathbb{C}^n,\mathbb{C}^n)}\leq 2(\sigma-\sigma')^{-1}\mu^{-2}[g]_{\sigma,\mu,\mathcal{D}}^{s,\beta}\leq\frac{1}{2}.
\end{eqnarray}
In view of (\ref{20243141942}),(\ref{20243172034})$_3$ and Cauchy estimates, we get
\begin{eqnarray}\label{20243172139}
\begin{aligned}
|\alpha(t)|&\leq 2(\sigma-\sigma')^{-1}[g]_{\sigma,\mu,\mathcal{D}}^{s,\beta}\big(1+\mu^{-1}\|\zeta(t)\|_s+\frac{\mu^{-2}}{2}\|\zeta(t)\|_s^2\big)\\&\leq 4(\sigma-\sigma')^{-1}[g]_{\sigma,\mu,\mathcal{D}}^{s,\beta}\big(1+\mu^{-1}\|\zeta^0\|_s+\mu^{-2}\|\zeta^0\|_s^2\big).
\end{aligned}			
\end{eqnarray}
Similarly as $\zeta(t)$, we get
			\begin{eqnarray}\label{20243261953}
				r(t)=\alpha^{\infty}(t)+(1+\Lambda^{\infty}(t))r^0,
			\end{eqnarray}
			where
\begin{eqnarray}\label{20240625-3}
\Lambda^{\infty}(t)=\sum_{k\geq 1}\int_{0}^{t}\int_{0}^{t_1}\cdots\int_{0}^{t_{k-1}}\prod_{j=1}^k\Lambda(t_j)dt_k\cdots dt_2dt_1
\end{eqnarray}
and
\begin{eqnarray}\label{20245221309}
\alpha^{\infty}(t)=\int_{0}^{t}\alpha(t_1)dt_1+\sum_{k\geq 2}\int_{0}^{t}\int_{0}^{t_1}\cdots\int_{0}^{t_{k-1}}\prod_{j=1}^{k-1}\Lambda(t_j)\alpha(t_k)dt_k\cdots dt_2dt_1.
\end{eqnarray}
From (\ref{20243172223}) and (\ref{20243172139}), we obtain that
\begin{eqnarray}\label{2024521023}			\|\Lambda^{\infty}(t)\|_{\mathcal{L}(\mathbb{C}^n,\mathbb{C}^n)}\leq 4(\sigma-\sigma')^{-1}\mu^{-2}[g]_{\sigma,\mu,\mathcal{D}}^{s,\beta}
\end{eqnarray}
and
\begin{eqnarray}\label{2024521035}
|\alpha^{\infty}(t)|\leq 8(\sigma-\sigma')^{-1}\big(1+\mu^{-1}\|\zeta^0\|_s+\mu^{-2}\|\zeta^0\|_s^2\big)[g]_{\sigma,\mu,\mathcal{D}}^{s,\beta}.
\end{eqnarray}
Putting (\ref{2024521023}) and (\ref{2024521035}) into (\ref{20243261953}), it is easy to verify $r(t)-r^0$ fulfills the second estimate of (\ref{20243172034}).

In the following, we estimate the derivative of $r(t)$ with respect to $\rho$. By (\ref{202431415171}), (\ref{20243141942}), \eqref{20245212009}$_1$, (\ref{20243172034})$_3$ and Cauchy estimate, we get
\begin{eqnarray}\label{20245221325}
\|\partial_{\rho}\Lambda (t)\|_{\mathcal{L}(\mathbb{C}^n,\mathbb{C}^n)}\leq 4(\sigma-\sigma')^{-1}\mu^{-2}[g]_{\sigma,\mu,\mathcal{D}}^{s,\beta}
\end{eqnarray}
and
\begin{eqnarray}\label{20245221324}
\begin{aligned}
|\partial_{\rho}\alpha(t)|\leq 8(\sigma-\sigma')^{-1}[g]_{\sigma,\mu,\mathcal{D}}^{s,\beta}\big(1+\mu^{-1}\|\zeta^0\|_s+\mu^{-2}\|\zeta^0\|_s^2\big).
\end{aligned}			
\end{eqnarray}
Then differentiating \eqref{20240625-3} and \eqref{20245221309} with respect to $\rho$, we get
\begin{eqnarray}\label{20240625-4}			\|\partial_{\rho}\Lambda^{\infty}(t)\|_{\mathcal{L}(\mathbb{C}^n,\mathbb{C}^n)}\leq 16(\sigma-\sigma')^{-1}\mu^{-2}[g]_{\sigma,\mu,\mathcal{D}}^{s,\beta}
\end{eqnarray}
and
\begin{eqnarray}\label{20240625-5}
|\partial_{\rho}\alpha^{\infty}(t)|\leq 32(\sigma-\sigma')^{-1}\big(1+\mu^{-1}\|\zeta^0\|_s+\mu^{-2}\|\zeta^0\|_s^2\big)[g]_{\sigma,\mu,\mathcal{D}}^{s,\beta}
\end{eqnarray}
from which the second estimate of (\ref{20245212009}) for $\partial_{\rho}r(t)$ is derived.

Moreover, due to $V=I+\Lambda^{\infty}$, by (\ref{2024521023}) and (\ref{20240625-4}), the estimate (\ref{20245201706}) holds.
By \eqref{20243192006} and Cauchy estimates, we have
\begin{eqnarray}\label{20240701-1}		
\sup_{\nu=0,1}\|\partial_{\rho}^{\nu}\nabla_{\theta}g_{\zeta}\big(\theta(t);\rho\big)\|_{s+\beta}
\leq2(\sigma-\sigma')^{-1}{\mu}^{-1}[g]_{\sigma,\mu,\mathcal{D}}^{s,\beta+},
\end{eqnarray}
\begin{eqnarray}\label{20240701-2}
\sup_{\nu=0,1}|\partial_{\rho}^{\nu}\nabla_{\theta}g_{\zeta\zeta}\big(\theta(t);\rho\big)|_{s,\beta+}
\leq2(\sigma-\sigma')^{-1}{\mu}^{-2}[g]_{\sigma,\mu,\mathcal{D}}^{s,\beta+},
\end{eqnarray}
and thus by Lemma \ref{20243141530-3},
\begin{eqnarray}\label{20240701-3}
\sup_{\nu=0,1}|\partial_{\rho}^{\nu}\nabla_{\theta}g_{\zeta\zeta}\big(\theta(t);\rho\big)|_{\mathcal{L}(Y_s,Y_{s+\beta'})}
\leq\frac{2^{s+5}}{\beta-\beta'}(\sigma-\sigma')^{-1}{\mu}^{-2}[g]_{\sigma,\mu,\mathcal{D}}^{s,\beta+}.
\end{eqnarray}
By direct calculation, we have
\begin{eqnarray*}
\nabla_{\zeta^0}\alpha(t)=-{}^tU\nabla_{\theta}g_{\zeta}\big(\theta(t);\rho\big)-{}^tU\nabla_{\theta}g_{\zeta\zeta}\big(\theta(t);\rho\big)\zeta(t)
\end{eqnarray*}
and
\begin{eqnarray*}
\nabla_{\zeta^0}^2\alpha(t)=-{}^tU\nabla_{\theta}g_{\zeta\zeta}\big(\theta(t);\rho\big)U.
\end{eqnarray*}
Then by \eqref{20243141942}, \eqref{20245211529}, \eqref{20240701-1}, \eqref{20240701-3} and $\|\zeta(t)\|_s\leq\frac{\mu+\mu'}{2}$, we get
\begin{eqnarray}\label{20240701-4}
\|\nabla_{\zeta^0}\alpha(t)\|_{s+\beta'}
\leq\frac{2^{s+6}}{\beta-\beta'}(\sigma-\sigma')^{-1}{\mu}^{-1}[g]_{\sigma,\mu,\mathcal{D}}^{s,\beta+};
\end{eqnarray}
by \eqref{20243141942}, \eqref{20245201521}, \eqref{20240701-2} and Lemma \ref{20243141530-2}, we get
\begin{eqnarray}\label{20240701-5}
|\nabla_{\zeta^0}^2\alpha(t)|_{s,\beta+}
\leq4(\sigma-\sigma')^{-1}{\mu}^{-2}[g]_{\sigma,\mu,\mathcal{D}}^{s,\beta+}.
\end{eqnarray}
Furthermore, we have
\begin{eqnarray*}
\begin{aligned}
\partial_{\rho}\nabla_{\zeta^0}\alpha(t)&=-(\partial_{\rho}{}^tU)\nabla_{\theta}g_{\zeta}\big(\theta(t);\rho\big)-{}^tU\partial_{\rho}\nabla_{\theta}g_{\zeta}\big(\theta(t);\rho\big)-(\partial_{\rho}{}^tU)\nabla_{\theta}g_{\zeta\zeta}\big(\theta(t);\rho\big)\zeta(t)\\
&-{}^tU\partial_{\rho}\nabla_{\theta}g_{\zeta\zeta}\big(\theta(t);\rho\big)\zeta(t)-{}^tU\nabla_{\theta}g_{\zeta\zeta}\big(\theta(t);\rho\big)\partial_{\rho}\zeta(t)
\end{aligned}			
\end{eqnarray*}
and
\begin{eqnarray*}
\partial_{\rho}\nabla_{\zeta^0}^2\alpha(t)=-(\partial_{\rho}{}^tU)\nabla_{\theta}g_{\zeta\zeta}\big(\theta(t);\rho\big)U-{}^tU\partial_{\rho}\nabla_{\theta}g_{\zeta\zeta}\big(\theta(t);\rho\big)U-{}^tU\nabla_{\theta}g_{\zeta\zeta}\big(\theta(t);\rho\big)(\partial_{\rho}U).
\end{eqnarray*}
Then in the same manner, we deduce
\begin{eqnarray}\label{20240701-6}
\|\partial_{\rho}\nabla_{\zeta^0}\alpha(t)\|_{s+\beta'}
\leq\frac{2^{s+7}}{\beta-\beta'}(\sigma-\sigma')^{-1}{\mu}^{-1}[g]_{\sigma,\mu,\mathcal{D}}^{s,\beta+},
\end{eqnarray}
\begin{eqnarray}\label{20240701-7}
|\partial_{\rho}\nabla_{\zeta^0}^2\alpha(t)|_{s,\beta+}
\leq8(\sigma-\sigma')^{-1}{\mu}^{-2}[g]_{\sigma,\mu,\mathcal{D}}^{s,\beta+}.
\end{eqnarray}
Since $L(\theta^0,\zeta^0;t)=\alpha^{\infty}(t)$ with the formula \eqref{20245221309} and $\Lambda(t)$ is independent of $\zeta^0$, then the estimates of $L$ in (\ref{20243172102}) and (\ref{2024317210211}) follow from (\ref{20240701-4})-(\ref{20240701-7}).
\end{proof}

The next lemma indicates the preservation of functions $h\in\mathcal{T}^{s,\beta}_{\sigma,\mu,\mathcal{D}}$ under the flow maps $\Phi_g^t$.
\begin{lem}\label{20243212033}
		Let $0<\beta'<\beta\leq\min\{\frac{s}{2},\frac{1}{2}\}$,
$0<\sigma'<\sigma\leq1$,
$0<\mu'<\mu\leq1$,
$h\in \mathcal{T}^{s,\beta}_{\sigma,\mu,\mathcal{D}}$, and $g=g^T\in\mathcal{T}^{s,\beta+}_{\sigma,\mu,\mathcal{D}}$ satisfy (\ref{20243141942}) with $c_1>1$ being a properly large constant only depending on $n$ and $s$. Then for $0\leq t\leq 1$, we have
$h_t:=h\big(\Phi_g^t(\theta,r,\zeta;\rho);\rho\big)\in\mathcal{T}^{s,\beta'}_{\sigma',\mu',\mathcal{D}}$
with the following estimate
\begin{eqnarray}
[h_t]_{\sigma',\mu',\mathcal{D}}^{s,\beta'}\leq2[h]_{\sigma,\mu,\mathcal{D}}^{s,\beta}.
\end{eqnarray}
\end{lem}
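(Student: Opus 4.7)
The approach is to compose $h$ with the explicit form of $\Phi_g^t$ supplied by Lemma~\ref{20243181426} and then bound each piece of the resulting chain-rule expansion using the matrix calculus of Section~3. Writing, as in Lemma~\ref{20243181426},
\[
\Phi_g^t(\theta^0,r^0,\zeta^0)=\bigl(K(\theta^0),\ L(\theta^0,\zeta^0)+V(\theta^0)r^0,\ T(\theta^0)+U(\theta^0)\zeta^0\bigr),
\]
one computes
\begin{align*}
\nabla_{\zeta^0}h_t&=\sum_{b=1}^{n}(\partial_{r_b}h)\,\nabla_{\zeta^0}L^b+{}^tU\,\nabla_\zeta h,\\
\nabla_{\zeta^0}^2 h_t&={}^tU(\nabla_\zeta^2 h)U+\sum_{b,c}(\partial_{r_b}\partial_{r_c}h)\,\nabla_{\zeta^0}L^b\otimes\nabla_{\zeta^0}L^c\\
&\quad+\sum_b\bigl[({}^tU\nabla_\zeta\partial_{r_b}h)\otimes\nabla_{\zeta^0}L^b+\nabla_{\zeta^0}L^b\otimes({}^tU\nabla_\zeta\partial_{r_b}h)\bigr]+\sum_b(\partial_{r_b}h)\,\nabla_{\zeta^0}^2 L^b.
\end{align*}
All derivatives of $h$ are evaluated at $\Phi_g^t(\theta^0,r^0,\zeta^0)\in\mathcal{O}^s(\tfrac{\sigma+\sigma'}{2},\tfrac{\mu+\mu'}{2})\subset\mathcal{O}^s(\sigma,\mu)$, so $|h_t|\leq[h]_{\sigma,\mu,\mathcal{D}}^{s,\beta}$ is immediate.

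Next I would treat $\|\nabla_{\zeta^0}h_t\|_s$ summand by summand: the $U$-piece combines $\|\nabla_\zeta h\|_s\leq\mu^{-1}[h]_{\sigma,\mu,\mathcal{D}}^{s,\beta}$ from \eqref{20243141517-2} with $\|U\|_{\mathcal{L}(Y_s,Y_s)}\leq 1+\|U-I\|_{\mathcal{L}(Y_s,Y_{s+\beta'})}\leq 2$ coming from \eqref{202431721021}, while the $L^b$-pieces combine a Cauchy estimate in $r$ (paying $(\mu-\mu')^{-2}$) with the $Y_{s+\beta'}\hookrightarrow Y_s$ bound on $\nabla_{\zeta^0}L^b$ from \eqref{20243172102}. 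The central step is the $\mathcal{M}_{s,\beta'}$-bound on $\nabla_{\zeta^0}^2h_t$, handled term by term. The leading piece ${}^tU(\nabla_\zeta^2 h)U$ is expanded via $U=I+(U-I)$: the factor $\nabla_\zeta^2 h\in\mathcal{M}_{s,\beta}\subset\mathcal{M}_{s,\beta'}$ carries the leading $\mu^{-2}[h]_{\sigma,\mu,\mathcal{D}}^{s,\beta}$ contribution, while the cross products use $U-I\in\mathcal{M}_{s,\beta}^+$ (from \eqref{20245201521}) and Lemma~\ref{20243141530-1} to stay in $\mathcal{M}_{s,\beta}$. The tensor-product pieces $\nabla_{\zeta^0}L^b\otimes\nabla_{\zeta^0}L^c$ and $({}^tU\nabla_\zeta\partial_{r_b}h)\otimes\nabla_{\zeta^0}L^b$ are exactly of the form covered by Lemma~\ref{20243141530-4}: one factor lies in $Y_{s+\beta'}$ by \eqref{20243172102} while the other lies only in $Y_s$, which is precisely the mechanism that forces the descent from $\beta$ to $\beta'$. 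Finally, $(\partial_{r_b}h)\,\nabla_{\zeta^0}^2 L^b$ is controlled by \eqref{2024317210211} together with the embeddings $\mathcal{M}_{s,\beta}^+\subset\mathcal{M}_{s,\beta}\subset\mathcal{M}_{s,\beta'}$, and a Cauchy estimate on $\partial_{r_b}h$.

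The $\partial_\rho$ counterparts of all three quantities are handled identically via Leibniz, invoking the $\partial_\rho$-estimates \eqref{20245212009}, \eqref{20245211529}, \eqref{20245201521}, \eqref{20243172102}, \eqref{2024317210211} of Lemma~\ref{20243181426}. Collecting all contributions, the leading Hessian term produces exactly $[h]_{\sigma,\mu,\mathcal{D}}^{s,\beta}$ after weighting by $\mu'^2\leq\mu^2$, while every remaining term carries a factor $[g]_{\sigma,\mu,\mathcal{D}}^{s,\beta+}$ multiplied by some combination of $(\sigma-\sigma')^{-1}$, $(\mu-\mu')^{-2}$, $(\beta-\beta')^{-1}$, and a constant depending only on $n$ and $s$. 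All these are absorbed by choosing $c_1$ large enough in the smallness hypothesis~\eqref{20243141942}, leaving the factor $2$ in the conclusion.

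The main obstacle is bookkeeping rather than a single hard estimate: one must correctly pair each term of the six-term Hessian expansion with the right lemma of Section~3, verify that the regularity loss $\beta\to\beta'$ is triggered only through Lemmas~\ref{20243141530-3} and~\ref{20243141530-4} (the former already baked into Lemma~\ref{20243181426} via the bounds on $U-I$ and $\nabla_{\zeta^0}L^b$), and ensure that every small-divisor coefficient collected en route is dominated by~\eqref{20243141942}. Once this is organized the inequality $[h_t]_{\sigma',\mu',\mathcal{D}}^{s,\beta'}\leq 2[h]_{\sigma,\mu,\mathcal{D}}^{s,\beta}$ drops out.
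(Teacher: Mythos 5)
Your proposal matches the paper's proof in essentially every respect: you use the same explicit form of $\Phi_g^t$ from Lemma~\ref{20243181426}, the same chain-rule decomposition of $\nabla_{\zeta^0}^2 h_t$ into the five groupings $I_1,\dots,I_5$ of \eqref{20243251525}, the same pairing of each term with the appropriate matrix lemma from Section~3 (Lemma~\ref{20243141530-1} for the ${}^tU(\nabla_\zeta^2 h)U$ piece via $U=I+(U-I)$, Lemma~\ref{20243141530-4} for the tensor-product pieces), the same identification that the $\beta\to\beta'$ loss is funnelled through the $Y_{s+\beta'}$ bound on $\nabla_{\zeta^0}L^b$, and the same absorption of all auxiliary factors $(\sigma-\sigma')^{-1}$, $(\mu-\mu')^{-2}$, $(\beta-\beta')^{-1}$ into the smallness hypothesis \eqref{20243141942}. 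The only surface difference is presentational (you merge $I_3$ and $I_4$ symmetrically and phrase the $I_5$ bound as an explicit $U=I+(U-I)$ expansion, which is what the paper's inequality \eqref{20240702-8} does implicitly), so this is the paper's argument.
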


\begin{proof}			
Write the flow map $\Phi_g^t$ as
		\begin{eqnarray*}
			(\theta^0,r^0,\zeta^0)\mapsto (\theta(t),r(t),\zeta(t)).
		\end{eqnarray*}
By Lemma \ref{20243181426}, $h_t(\theta^0,r^0,\zeta^0;\rho)$ is analytical in $(\theta^0,r^0,\zeta^0)\in \mathcal{O}^s(\sigma',\mu')$ with
		\begin{eqnarray}\label{20245221511}
			|h_t(\theta^0,r^0,\zeta^0;\rho)|\leq [h]_{\sigma,\mu,\mathcal{D}}^{s,\beta}.
		\end{eqnarray}
Moreover, by (\ref{20243141942}), (\ref{20245212009}) and Cauchy estimates, we can deduce
		\begin{eqnarray}\label{20240702-1}
			|\partial_{\rho}h_t(\theta^0,r^0,\zeta^0;\rho)|\leq 2[h]_{\sigma,\mu,\mathcal{D}}^{s,\beta}.
		\end{eqnarray}

Now we estimate the gradient of $h_t(\theta^0,r^0,\zeta^0)$. Since $\theta(t)$ does not depend on $\zeta^0$, we have
		\begin{eqnarray*}
			\nabla_{\zeta^0}h_t=\sum_{b=1}^{n}\frac{\partial h}{\partial r_b}\cdot\frac{\partial r_b}{\partial\zeta^0}+{}^tU\nabla_{\zeta}h.
		\end{eqnarray*}
Since $(\theta(t),r(t),\zeta(t))\in\mathcal{O}^s(\frac{\sigma+\sigma'}{2},\frac{\mu+\mu'}{2})$, by Cauchy estimates, we get
		\begin{eqnarray}\label{20245221521}
			\Big|\frac{\partial h}{\partial r_b}\Big|\leq 4(\mu-\mu')^{-2}[h]_{\sigma,\mu,\mathcal{D}}^{s,\beta}.
		\end{eqnarray}
By (\ref{20243141942}), (\ref{20243172102}) and (\ref{20245221521}), we get
		\begin{eqnarray}\label{20240701-8}
			\Big\|\sum_{b=1}^{n}\frac{\partial h}{\partial r_b}\cdot\frac{\partial r_b}{\partial\zeta^0}\Big\|_{s}
&\leq&\frac{2^{s+11}n}{\beta-\beta'}(\sigma-\sigma')^{-1}{\mu}^{-1}(\mu-\mu')^{-2}[h]_{\sigma,\mu,\mathcal{D}}^{s,\beta}{[g]_{\sigma,\mu,\mathcal{D}}^{s,\beta+}}\nonumber\\ &\leq&\frac{1}{2}\mu^{-1}[h]_{\sigma,\mu,\mathcal{D}}^{s,\beta};
		\end{eqnarray}
by (\ref{20243141942}) and (\ref{20245211529}), we get
		\begin{eqnarray}\label{20240701-9}
			\|{}^tU\nabla_{\zeta}h\|_{s}
&\leq&\Big(1+\frac{2^{s+7}{\mu}^{-2}}{\beta-\beta'}[g]_{\sigma,\mu,\mathcal{D}}^{s,\beta+}\Big){\mu}^{-1}[h]_{\sigma,\mu,\mathcal{D}}^{s,\beta}\nonumber\\ &\leq&\frac{3}{2}\mu^{-1}[h]_{\sigma,\mu,\mathcal{D}}^{s,\beta}.
		\end{eqnarray}
Summing the estimates \eqref{20240701-8} and \eqref{20240701-9}, we get
\begin{eqnarray}\label{20240701-10}
			\|\nabla_{\zeta^0}h_t\|_{s}
\leq2\mu^{-1}[h]_{\sigma,\mu,\mathcal{D}}^{s,\beta}.
		\end{eqnarray}
Similarly, we can estimate the derivative of $\nabla_{\zeta^0}h_t$ with respect to $\rho$ and get
		\begin{eqnarray}\label{20240701-11}
			\|\partial_{\rho}\nabla_{\zeta^0}h_t\|_{s}
\leq2\mu^{-1}[h]_{\sigma,\mu,\mathcal{D}}^{s,\beta}.
		\end{eqnarray}

Next we estimate the Hessian of $h_t(\theta^0,r^0,\zeta^0)$. By direct calculation, we have
		\begin{eqnarray}\label{20243251525}
			\begin{aligned}
				\nabla_{\zeta^0}^2h_t
&=\sum_{b,b'=1}^{n}\frac{\partial^2 h}{\partial r_{b}\partial r_{b'}}\frac{\partial r_{b}}{\partial \zeta^0}\otimes\frac{\partial r_{b'}}{\partial \zeta^0}
+\sum_{b=1}^{n}\frac{\partial h}{\partial r_b}\frac{\partial^2r_b}{(\partial\zeta^0)^2}\\
&+\sum_{b=1}^{n}\frac{\partial r_b}{\partial \zeta^0}\otimes \big({}^tU\frac{\partial^2h}{\partial r_b\partial\zeta}\big)
+\sum_{b=1}^{n}\big({}^tU\frac{\partial^2 h}{\partial\zeta\partial r_b}\big)\otimes\frac{\partial r_b}{\partial \zeta^0}
+{}^tU\frac{\partial^2 h}{\partial\zeta^2}U.
			\end{aligned}
		\end{eqnarray}
For simplicity, let $\textrm{I}_{1}$, $\textrm{I}_{2}$, $\textrm{I}_{3}$, $\textrm{I}_{4}$, $\textrm{I}_{5}$ be the right side of (\ref{20243251525}) successively. In the following, we will estimate them separately.

		(\rmnum{1})
		By Cauchy estimates, we get
		\begin{eqnarray}\label{202431419422}
			\Big|\frac{\partial^2 h}{\partial r_{b}\partial r_{b'}}\Big|\leq 32(\mu-\mu')^{-4}[h]_{\sigma,\mu,\mathcal{D}}^{s,\beta}.
		\end{eqnarray}
		Then by (\ref{20243141942}), (\ref{20243172102}), (\ref{202431419422}) and Lemma \ref{20243141530-4}, one has
		\begin{eqnarray}\label{20240702-2}
			\begin{aligned}
				|\textrm{I}_{1}|_{s,\beta'}
&\leq2^{2s+23}n^2(\beta-\beta')^{-2}(\sigma-\sigma')^{-2}(\mu-\mu')^{-4}\mu^{-2}\big({[g]_{\sigma,\mu,\mathcal{D}}^{s,\beta+}}\big)^2[h]_{\sigma,\mu,\mathcal{D}}^{s,\beta}\\
&\leq\frac{1}{5}\mu^{-2}[h]_{\sigma,\mu,\mathcal{D}}^{s,\beta}.
			\end{aligned}
		\end{eqnarray}

        (\rmnum{2})
        By (\ref{20243141942}), (\ref{2024317210211}) and (\ref{20245221521}), one has
		\begin{eqnarray}\label{20240702-3}
			\begin{aligned}
				|\textrm{I}_{2}|_{s,\beta+}
&\leq2^{7}n(\sigma-\sigma')^{-1}(\mu-\mu')^{-2}\mu^{-2}{[g]_{\sigma,\mu,\mathcal{D}}^{s,\beta+}}[h]_{\sigma,\mu,\mathcal{D}}^{s,\beta}\\
&\leq\frac{1}{5}\mu^{-2}[h]_{\sigma,\mu,\mathcal{D}}^{s,\beta}.
			\end{aligned}
		\end{eqnarray}

		(\rmnum{3})
        By Cauchy estimates, we get
		\begin{eqnarray}\label{20243202025}
			\Big\|\frac{\partial^2h}{\partial r_b\partial\zeta}\Big\|_{s}\leq 4(\mu-\mu')^{-2}\mu^{-1}[h]_{\sigma,\mu,\mathcal{D}}^{s,\beta}.
		\end{eqnarray}
       By (\ref{20243141942}), (\ref{20245211529}) and (\ref{20243202025}), we get
		\begin{eqnarray}\label{20240702-4}
			\Big\|{}^tU\frac{\partial^2 h}{\partial r_b\partial \zeta}\Big\|_s
&\leq&4(\mu-\mu')^{-2}{\mu}^{-1}\Big(1+\frac{2^{s+7}{\mu}^{-2}}{\beta-\beta'}[g]_{\sigma,\mu,\mathcal{D}}^{s,\beta+}\Big)[h]_{\sigma,\mu,\mathcal{D}}^{s,\beta}\nonumber\\ &\leq&8(\mu-\mu')^{-2}{\mu}^{-1}[h]_{\sigma,\mu,\mathcal{D}}^{s,\beta}.
		\end{eqnarray}
		Then by (\ref{20243141942}), (\ref{20243172102}), (\ref{20240702-4}) and Lemma \ref{20243141530-4}, one has
		\begin{eqnarray}\label{20240702-5}
			\begin{aligned}
				|\textrm{I}_{3}|_{s,\beta'}
&\leq\sum_{b=1}^{n}\Big\|\frac{\partial r_b}{\partial \zeta^0}\Big\|_{s+\beta'}	\Big\|{}^tU\frac{\partial^2 h}{\partial r_b\partial \zeta}\Big\|_s\\
&\leq2^{s+12}n(\beta-\beta')^{-1}(\sigma-\sigma')^{-1}(\mu-\mu')^{-2}\mu^{-2}{[g]_{\sigma,\mu,\mathcal{D}}^{s,\beta+}}[h]_{\sigma,\mu,\mathcal{D}}^{s,\beta}\\
&\leq\frac{1}{5}\mu^{-2}[h]_{\sigma,\mu,\mathcal{D}}^{s,\beta}.
			\end{aligned}
		\end{eqnarray}

		(\rmnum{4})
        The same as the estimate of $\textrm{I}_{3}$, one has
        \begin{eqnarray}\label{20240702-6}
				|\textrm{I}_{4}|_{s,\beta'}
\leq\frac{1}{5}\mu^{-2}[h]_{\sigma,\mu,\mathcal{D}}^{s,\beta}.
		\end{eqnarray}

		(\rmnum{5})
        In view of (\ref{20240605-1}), we have
        \begin{eqnarray}\label{20240702-7}
				\Big|\frac{\partial^2 h}{\partial\zeta^2}\Big|_{s,\beta}
\leq\mu^{-2}[h]_{\sigma,\mu,\mathcal{D}}^{s,\beta}.
		\end{eqnarray}
         Then by (\ref{20243141942}), (\ref{20245201521}), (\ref{20240702-7}) and Lemma \ref{20243141530-1}, one has
         \begin{eqnarray}\label{20240702-8}
			\begin{aligned}
				|\textrm{I}_{5}|_{s,\beta}
&\leq\Big(1+2^{\frac{s}{2}+2}\beta^{-1}|U-I|_{s,\beta+}\Big)^2
\Big|\frac{\partial^2 h}{\partial\zeta^2}\Big|_{s,\beta}\\
&\leq\Big(1+2^{\frac{s}{2}+5}\beta^{-1}\mu^{-2}{[g]_{\sigma,\mu,\mathcal{D}}^{s,\beta+}}\Big)^2\mu^{-2}[h]_{\sigma,\mu,\mathcal{D}}^{s,\beta}\\
&\leq\frac{6}{5}\mu^{-2}[h]_{\sigma,\mu,\mathcal{D}}^{s,\beta}.
			\end{aligned}
		\end{eqnarray}
Summing the estimates \eqref{20240702-2}, \eqref{20240702-3}, \eqref{20240702-5}, \eqref{20240702-6} and \eqref{20240702-8}, we get
\begin{eqnarray}\label{20240702-9}
			|\nabla_{\zeta^0}^2h_t|_{s,\beta'}
\leq2\mu^{-2}[h]_{\sigma,\mu,\mathcal{D}}^{s,\beta}.
		\end{eqnarray}
Similarly, we can estimate the derivative of $\nabla_{\zeta^0}^2h_t$ with respect to $\rho$ and get
		\begin{eqnarray}\label{20240702-10}
			|\partial_{\rho}\nabla_{\zeta^0}^2h_t|_{s,\beta'}
\leq2\mu^{-2}[h]_{\sigma,\mu,\mathcal{D}}^{s,\beta}.
		\end{eqnarray}

		 Finally, by combining the estimates \eqref{20245221511}, \eqref{20240702-1}, \eqref{20240701-10}, \eqref{20240701-11}, \eqref{20240702-9} and \eqref{20240702-10}, we complete the proof of this lemma. 		
	\end{proof}

\section{The KAM step}
\subsection{Homological equation}

	At each KAM step, we look for a transformation such that the perturbation of the transformed Hamiltonian becomes smaller. Roughly speaking, for the $m$-th step, the Hamiltonian
	\begin{eqnarray*}
		H_m=h_m+f_m
	\end{eqnarray*}
	is considered as a small perturbation of the normal form $h_m$, where $f_m$ is of order $\varepsilon_m$. A transformation $\Phi_m$ is set up so that
	\begin{eqnarray*}
		H_m\circ\Phi_m=h_{m+1}+f_{m+1}
	\end{eqnarray*}
	with another normal form $h_{m+1}$ and a much smaller perturbation $f_{m+1}$ of size $\varepsilon_m^2$. We drop the index of $H_m,h_m,f_m,\Phi_m$ and shorten the index $m+1$ as $+$.
	
	Let $h$ be a normal form
	\begin{eqnarray*}
		h(r,\zeta;\rho)=\langle \omega(\rho),r\rangle+\frac{1}{2}\langle \zeta,A(\rho)\zeta\rangle
	\end{eqnarray*}
and $f^T$ be the $2$-order Taylor polynomial truncation of $f$, that is,
	\begin{eqnarray*}
		f^T=f_{\theta}+\langle f_r,r\rangle+\langle f_{\zeta
		},\zeta\rangle+\frac{1}{2}\langle \zeta,f_{\zeta\zeta}\zeta\rangle,
	\end{eqnarray*}
	where $f_{\theta},f_r,f_{\zeta}$ and $f_{\zeta\zeta}$ depend on $\theta$ and $\rho$. The coordinate transformation $\Phi$ is obtained as the time-$1$-map $X_{S}^t|_{t=1}$ of a Hamiltonian vector field $X_S$, where $S$ is of the same form as $f^T$:
	\begin{eqnarray}
		S(\theta,r,\zeta;\rho)=S_{\theta}(\theta;\rho)+\langle S_r(\theta;\rho),r\rangle+\langle S_{\zeta}(\theta;\rho),\zeta\rangle+\frac{1}{2}\langle \zeta,S_{\zeta\zeta}(\theta;\rho)\zeta\rangle.
	\end{eqnarray}
	Thus, it follows that
	\begin{eqnarray}\label{20243211944}
		\begin{aligned}
			H\circ \Phi&=(h+f^T)\circ X_S^1+(f-f^T)\circ X_S^1\\&=h+\{h,S\}+f^T+\int_{0}^{1}\{(1-t)\{h,S\}+f^T,S\}\circ X_S^tdt+(f-f^T)\circ X_S^1.
		\end{aligned}
	\end{eqnarray}

Introduce the cut-off operator $\Gamma_N$ with $N\in\mathbb{N}^+$. For a function $g$ defined on $\mathbb{T}_{\sigma}^n$, define
	\begin{eqnarray*}
		\Gamma_Ng(\theta):=\sum_{|k|\leq N}\hat{g}(k)e^{\textbf{i}\langle k,\theta\rangle},
	\end{eqnarray*}
	where $\hat{g}(k)$ is the $k$- Fourier coefficient of $g$. For $f\in\mathcal{T}_{\sigma,\mu,\mathcal{D}}^{s,\beta}$, define $\Gamma_Nf$ in the same way, and choose large enough $N$ such that $f-\Gamma_Nf$ is of order $\varepsilon^2$. We will solve the homological equation
	\begin{eqnarray}\label{20240709-4}
			\{h,S\}+\Gamma_Nf^T=\hat{h}
	\end{eqnarray}
with the shift term
\begin{eqnarray}\label{2024661530}
		\hat{h}:=[\![f_{\theta}]\!]+\langle [\![f_{r}]\!],r\rangle+\frac{1}{2}\langle {\zeta},B_{\zeta\zeta}\zeta\rangle,
	\end{eqnarray}
where $[\![u]\!] $ denotes the averaging of a function $u$ in $\theta\in \mathbb{T}^n$, and $B_{\zeta\zeta}\in \mathcal{NF}$ will be chosen later. More precisely, the homological equation \eqref{20240709-4} can be written as	
	\begin{eqnarray}\label{20245231310}
		-\partial_{\omega}S_{\theta}+\Gamma_Nf_{\theta}-[\![f_{\theta}]\!]&=&0,\\	-\partial_{\omega}S_{r}+\Gamma_Nf_{r}-[\![f_{r}]\!]&=&0,\label{20245231513}\\	-\partial_{\omega}S_{\zeta}+AJS_{\zeta}+\Gamma_Nf_{\zeta}&=&0,\label{20245231307}\\
-\partial_{\omega}S_{\zeta\zeta}+AJS_{\zeta\zeta}-S_{\zeta\zeta}JA-B_{\zeta\zeta}+\Gamma_Nf_{\zeta\zeta}&=&0.\label{20245231311}
	\end{eqnarray}
We are now in a position to solve these homological equations respectively. In what follows the notation $a\lessdot b$ stands for ``there exists a positive constant $c$ such that $a\leq cb$, where $c$ only depends on $n,s,b_0,b_1,c^*,d^*,M_{\omega},M_{\Omega},\beta$.''

$\bullet$~~~The first two equations

Assume the tangential frequency $\omega(\rho)$ satisfies
\begin{eqnarray}\label{20245262015}
\sup_{\nu=0,1,\rho\in\mathcal{D}}|\partial_{\rho}^{\nu}\omega(\rho)|\leq M_{\omega},
\end{eqnarray}
and normal form matrix $A(\rho)$ of the form \eqref{20240709-5} satisfies
	\begin{eqnarray}\label{20245271500}
\sup_{\nu=0,1,\rho\in\mathcal{D}}\|\partial_{\rho}^{\nu}\tilde{A}_j^j(\rho)\|\leq \frac{M_{\Omega}}{j^{\beta}}.
	\end{eqnarray}
Under the following non-resonant condition
	\begin{eqnarray}
		|\langle k,\omega(\rho)\rangle|\geq \kappa~~~\text{for}~~~0<|k|\leq N,
	\end{eqnarray}	
where $0<\kappa<1$, the solutions of the homological equations (\ref{20245231310}) and (\ref{20245231513}) can be obtained by the standard procedures and the details are omitted.
$\bullet$~~~The third equation

Introduce the complex variables ${}^t(\xi,\eta)$ by
	\begin{eqnarray}\label{20245262056}
		\zeta_{(j,\iota)}=	\begin{pmatrix}
			p_{(j,\iota)}\\ q_{(j,\iota)}
		\end{pmatrix}=U\begin{pmatrix}
			\xi_{(j,\iota)}\\ \eta_{(j,\iota)}
		\end{pmatrix},~~~\text{for}~~~(j,\iota)\in\mathfrak{L},
	\end{eqnarray}
where
\begin{eqnarray*}
		U=\begin{pmatrix}
			\frac{1}{\sqrt{2}} & \frac{1}{\sqrt{2}}\\ \frac{-\textbf{i}}{\sqrt{2}} & \frac{\textbf{i}}{\sqrt{2}}
		\end{pmatrix}.
\end{eqnarray*}
Let $Q:\mathfrak{L}\times\mathfrak{L}\to \mathbb{C}$ be the complex valued normal form matrix associated to $A$, i.e., $$\frac{1}{2}\langle\zeta,A\zeta\rangle=\langle\xi,Q\eta\rangle.$$
Since $Q$ is Hermitian and block diagonal, i.e., $Q=\text{diag}(Q_j^j)$, then for every $j\geq1$, there exists a unitary matrix $P_j$ such that
	\begin{eqnarray}\label{20240710-1}
	{}^tP_jQ_j^jP_j=D_j:=\text{diag}(\Omega_{j,\iota},\iota=1,\cdots,d_j).
	\end{eqnarray}

Next, we solve the homological equation (\ref{20245231307}) under the following non-resonant condition
	\begin{eqnarray}\label{20245231546}
		|\langle k,\omega(\rho)\rangle+\Omega_{j,\iota}(\rho)|\geq \kappa j~~~\text{for}~~~|k|\leq N.
	\end{eqnarray}
Multiplying by ${}^tUJ$ in (\ref{20245231307}), we have
	\begin{eqnarray}\label{20246221328}	-\partial_{\omega}{}^tUJS_{\zeta}+{}^tUJAJS_{\zeta}+\Gamma_N{}^tUJf_{\zeta}=0.
	\end{eqnarray}
By the substitution ${}^tUJS_{\zeta}=S$, ${}^tUJf_{\zeta}=F$ and the fact ${}^tUJU=\textbf{i}J$,	equation (\ref{20246221328}) becomes
	\begin{eqnarray}\label{20246221509}
		-\partial_{\omega}S+\textbf{i}J{}^tUAUS+\Gamma_NF=0.
	\end{eqnarray}
Denote
	\begin{eqnarray} S={}^t(S_j^{\xi},S_j^{\eta})_{j\geq1},~~~~~~F={}^t(F_j^{\xi},F_j^{\eta})_{j\geq1}.
	\end{eqnarray}
Then by the relationship
	\begin{eqnarray}\label{20246221623}
		({}^tUAU)_j^j=\begin{pmatrix}
			0  & Q_j^j \\ {}^tQ_j^j & 0
		\end{pmatrix},
	\end{eqnarray}
the system (\ref{20246221509}) is decoupled into
	\begin{eqnarray}\label{20246221610}	-\partial_{\omega}S_{j}^{\xi}-\textbf{i}({}^tQ_{j}^j)S_{j}^{\xi}+\Gamma_NF_{j}^{\xi}&=&0,\\	-\partial_{\omega}S_{j}^{\eta}
		+\textbf{i}(Q_{j}^j)S_{j}^{\eta}+\Gamma_NF_{j}^{\eta}&=&0.
	\end{eqnarray}
Since the two equations above are conjugated, we just consider (\ref{20246221610}). Expanding the functions $S_j^{\xi},F_j^{\xi}$ with respect to $\theta$ into Fourier series, it becomes
	\begin{eqnarray}\label{20245231543}
		-\textbf{i}(\langle k,\omega\rangle+{}^tQ_j^j)\hat{S}_j(k)+\hat{F}_j(k)=0,~~~|k|\leq N,
	\end{eqnarray}
	where we have suppressed the upper index $\xi$.
	Multiplied by ${}^tP_j$ in the left side of (\ref{20245231543}), the equation turns into
	\begin{eqnarray}\label{20240710-3}
		-\textbf{i}(\langle k,\omega\rangle+D_j)\hat{S}_j'(k)+\hat{F}_j'(k)=0,~~~|k|\leq N
	\end{eqnarray}
	with $\hat{S}_j'(k)={}^tP_j\hat{S}_j(k)$ and $\hat{F}_j'(k)={}^tP_j\hat{F}_j(k)$.
By (\ref{20245231546}) and \eqref{20240710-3}, we have
	\begin{eqnarray}\label{20245262016}
		|\hat{S}_j(k)|=|\hat{S}_j'(k)|\leq \frac{1}{\kappa j}|\hat{F}_j(k)|.
	\end{eqnarray}
Then in view of the relationship between $S_{\zeta}$ and ${}^t(S^{\xi},S^{\eta})$, we have
	\begin{eqnarray}
		\begin{aligned}
			\|\hat{S}_{\zeta}(k;\rho)\|_{s+1}^2&=\sum_{j\geq 1}|(\hat{S}_{\zeta}(k;\rho))_j|^2j^{2s+2}\\&=\sum_{j\geq 1}(|\hat{S}_j^{\xi}(k;\rho)|^2+|\hat{S}_j^{\eta}(k;\rho)|^2)j^{2s+2}\\&\leq \frac{1}{\kappa^2}\sum_{j\geq 1}|(\hat{F}_{\zeta}(k;\rho))_j|^2j^{2s}\\&= \frac{1}{\kappa^2}\|\hat{f}_{\zeta}(k;\rho)\|_s^2.
		\end{aligned}
	\end{eqnarray}
	Furthermore, for any $\theta\in\mathbb{T}_{\sigma'}^n$,
	\begin{eqnarray}\label{20240712-1}
		\|S_{\zeta}(\theta;\rho)\|_{s+1}\lessdot \frac{1}{\kappa(\sigma-\sigma')^n}\sup_{\theta\in\mathbb{T}_{\sigma}^n}\|f_{\zeta}(\theta;\rho)\|_s.
	\end{eqnarray}

In the following, we give the estimate of $\partial_{\rho}\hat{S}_{\zeta}(k;\rho)$. Differentiating (\ref{20245231543}), we get
	\begin{eqnarray}\label{20245262038}
		-\textbf{i}(\langle k,\omega\rangle+Q_j^j)\partial_{\rho}\hat{S}_j(k;\rho)+\hat{R}_j(k;\rho)=0
	\end{eqnarray}
	with
$$\hat{R}_j(k;\rho):=\partial_{\rho}\hat{F}_j(k;\rho)-\textbf{i}(\langle k,\partial_{\rho}\omega\rangle+\partial_{\rho}Q_j^j)\hat{S}_j(k;\rho).$$
By (\ref{20245262015}), (\ref{20245271500}) and (\ref{20246221623}), we know
\begin{eqnarray}\label{2024672045}
|\partial_{\rho}\omega|\leq M_{\omega},~~~~~~\|\partial_{\rho}Q_j^j\|\leq M_{\Omega}.
\end{eqnarray}
Then due to (\ref{20245262016}), we have
	\begin{eqnarray}
		|\hat{R}_j(k;\rho)|\leq |\partial_{\rho}\hat{F}_j(k;\rho)|+(| k|M_{\omega}+M_{\Omega})\frac{1}{\kappa j}|\hat{F}_j(k;\rho)|.
	\end{eqnarray}
	Similar to (\ref{20245231543}), for (\ref{20245262038}), we obtain
	\begin{eqnarray}
		\begin{aligned}
			|\partial_{\rho}\hat{S}_j(k;\rho)|&\leq \frac{1}{\kappa j}|\hat{R}_j(k;\rho)|\\&\leq \frac{1}{\kappa j}|\partial_{\rho}\hat{F}_j(k;\rho)|+\frac{|k|M_{\omega}+M_{\Omega}}{\kappa^2j^2}|\hat{F}_j(k;\rho)|.
		\end{aligned}
	\end{eqnarray}
	Then it follows that
	\begin{eqnarray}
		\begin{aligned}
			\|\partial_{\rho}\hat{S}_{\zeta}(k;\rho)\|_{s+1}^2&=\sum_{j\geq 1}|(\partial_{\rho}\hat{S}_{\zeta}(k;\rho))_j|^2j^{2s+2}\\&=\sum_{j\geq 1}(|\partial_{\rho}\hat{S}_j^{\xi}(k;\rho)|^2+|\partial_{\rho}\hat{S}_j^{\eta}(k;\rho)|^2)j^{2s+2}\\&\lessdot \frac{1}{\kappa^2}\sum_{j\geq 1}|(\partial_{\rho}\hat{F}_{\zeta}(k;\rho))_j|^2j^{2s}+\frac{(|k|+1)^2}{\kappa^4}\sum_{j\geq 1}|(\hat{F}_{\zeta}(k;\rho))_j|^2j^{2s}\\&= \frac{1}{\kappa^2}\|\partial_{\rho}\hat{f}_{\zeta}(k;\rho)\|_s^2+\frac{(|k|+1)^2}{\kappa^4}\|\hat{f}_{\zeta}(k;\rho)\|_s^2.
		\end{aligned}
	\end{eqnarray}
Therefore, for any $\theta\in\mathbb{T}_{\sigma'}^n$,
	\begin{eqnarray}\label{20240712-2}
			\|\partial_{\rho} S_{\zeta}(\theta;\rho)\|_{s+1}\lessdot \frac{1}{\kappa(\sigma-\sigma')^n}\sup_{\theta\in\mathbb{T}_{\sigma}^n}\|\partial_{\rho} f_{\zeta}(\theta;\rho)\|_{s}+\frac{1}{\kappa^2(\sigma-\sigma')^{n+1}}\sup_{\theta\in\mathbb{T}_{\sigma}^n}\| f_{\zeta}(\theta;\rho)\|_{s}.
	\end{eqnarray}
Summing the estimates \eqref{20240712-1} and \eqref{20240712-2}, for any $\theta\in\mathbb{T}_{\sigma'}^n$, we get
	\begin{eqnarray}\label{20240712-3}
			\sup_{\nu=0,1}\|\partial_{\rho}^{\nu} S_{\zeta}(\theta;\rho)\|_{s+1}\lessdot \frac{1}{\kappa^2(\sigma-\sigma')^{n+1}}{\mu^{-1}}[f]_{\sigma,\mu,\mathcal{D}}^{s,\beta}.
	\end{eqnarray}

$\bullet$~~~The fourth equation
	
	Now, we solve the homological equation (\ref{20245231311}). By the substitution
	\begin{eqnarray}\label{20245271514}
		S:={}^tUS_{\zeta\zeta}U,~B:={}^tUB_{\zeta\zeta}U,~F:={}^tUf_{\zeta\zeta}U
	\end{eqnarray}
and the fact ${}^tUJU=\textbf{i}J$, the equation (\ref{20245231311}) becomes
	\begin{eqnarray}\label{20245262120}
	-\partial_{\omega}S+\textbf{i}{}^tUAUJS-\textbf{i}SJ{}^tUAU-B+\Gamma_NF=0.
	\end{eqnarray}
Denote
	\begin{eqnarray*}
		S=\begin{pmatrix}
		(S_i^j)^{\xi\xi}  & (S_i^j)^{\xi\eta}\\
        (S_i^j)^{\eta\xi} & (S_i^j)^{\eta\eta}
		\end{pmatrix}_{i,j\geq1}
	\end{eqnarray*}
and $B,F$ similarly. Then by the relationship \eqref{20246221623}, the system (\ref{20245262120}) is decoupled into
	\begin{eqnarray} -\partial_{\omega}(S_i^j)^{\xi\xi}+\textbf{i}(Q_i^i)(S_i^j)^{\xi\xi}+\textbf{i}(S_i^j)^{\xi\xi}({}^tQ_j^j)-(B_i^j)^{\xi\xi}+\Gamma_N(F_i^j)^{\xi\xi}=0,\label{20245262205}\\
-\partial_{\omega}(S_i^j)^{\xi\eta}+\textbf{i}(Q_i^i)(S_i^j)^{\xi\eta}-\textbf{i}(S_i^j)^{\xi\eta}(Q_j^j)-(B_i^j)^{\xi\eta}+\Gamma_N(F_i^j)^{\xi\eta}=0,\label{20245262206}\\
-\partial_{\omega}(S_i^j)^{\eta\xi}-\textbf{i}({}^tQ_i^i)(S_i^j)^{\eta\xi}+\textbf{i}(S_i^j)^{\eta\xi}({}^tQ_j^j)-(B_i^j)^{\eta\xi}+\Gamma_N(F_i^j)^{\eta\xi}=0,\label{20245262207}\\
-\partial_{\omega}(S_i^j)^{\eta\eta}-\textbf{i}({}^tQ_i^i)(S_i^j)^{\eta\eta}-\textbf{i}(S_i^j)^{\eta\eta}(Q_j^j)-(B_i^j)^{\eta\eta}+\Gamma_N(F_i^j)^{\eta\eta}=0.\label{20245262208}
	\end{eqnarray}
Under the following non-resonant condition
	\begin{eqnarray}
		|\langle k,\omega(\rho)\rangle+\Omega_{i,\iota_1}(\rho)+\Omega_{j,\iota_2}(\rho)|\geq \kappa (i+j)~~~\text{for}~~~|k|\leq N,
	\end{eqnarray}
the solutions of the homological equations (\ref{20245262205}) and (\ref{20245262208}) can be obtained with
$$(B_i^j)^{\xi\xi}=(B_i^j)^{\eta\eta}=0.$$
Since they are much simpler than (\ref{20245262206}) and (\ref{20245262207}), the details are omitted.
Notice that (\ref{20245262206}) and (\ref{20245262207}) are conjugated, so in the following we only give the procedure of solving (\ref{20245262206}) under the following non-resonant condition
	\begin{eqnarray}\label{20245271501}
		|\langle k,\omega(\rho)\rangle-\Omega_{i,\iota_1}(\rho)+\Omega_{j,\iota_2}(\rho)|\geq \kappa (1+|i-j|)
	\end{eqnarray}
for $|k|\leq N$	and $|k|+|i-j|\neq 0$.
Writing in Fourier coefficients, (\ref{20245262206}) reads
	\begin{eqnarray}\label{20245271444}
		-\textbf{i}\langle k,\omega\rangle\hat{S}_i^j(k)+\textbf{i}Q_i^i\hat{S}_i^j(k)-\textbf{i}\hat{S}_i^j(k)Q_j^j-B_i^j+\hat{F}_i^j(k)=0,~~~|k|\leq N,
	\end{eqnarray}
	where we have suppressed the upper index $\xi\eta$.

For the case $k=0$ and $i=j$, set
	\begin{eqnarray}\label{2024652052}
		B_j^j=\hat{F}_j^j(0) ~~~\text{and}~~~ \hat{S}_j^j(0)=0;
	\end{eqnarray}
otherwise, $|k|+|i-j|\neq 0$, set $B_i^j=0$. Thus, the matrix $B_{\zeta\zeta}$ is on normal form and for $\nu=0,1$, we have $\partial_{\rho}^{\nu}B_{\zeta\zeta}\in\mathcal{M}_{s,\beta}$ with the estimates
	\begin{eqnarray}\label{2024661532}
		|\partial_{\rho}^{\nu}B_{\zeta\zeta}|_{s,\beta}\leq |\partial_{\rho}^{\nu}\hat{f}_{\zeta\zeta}(0)|_{s,\beta}.
	\end{eqnarray}

For the case $|k|+|i-j|\neq 0$, in view of \eqref{20240710-1}, multiplied by ${}^tP_i$ and $P_j$ in the left and right sides of (\ref{20245271444}) respectively, the equation turns into
	\begin{eqnarray}\label{20240710-2}
		-\langle k,\omega\rangle\hat{S'}_i^j(k)+ D_i\hat{S'}_i^j(k)-\hat{S'}_i^j(k)D_j=\textbf{i}\hat{F'}_i^j(k)
	\end{eqnarray}
with
\begin{eqnarray}\label{20245271517}
\hat{S'}_i^j(k)={}^tP_i\hat{S}_i^j(k)P_j,~~~~~~ \hat{F'}_i^j(k)={}^tP_i\hat{F}_i^j(k)P_j.
	\end{eqnarray}
The equation \eqref{20240710-2} can be formally solved term by term
	\begin{eqnarray}\label{20245271502}	\hat{S'}_{(i,\iota_1)}^{(j,\iota_2)}(k)=\textbf{i}\frac{\hat{F'}_{(i,\iota_1)}^{(j,\iota_2)}(k)}{-\langle k,\omega\rangle+\Omega_{i,\iota_1}-\Omega_{j,\iota_2}},~~~~~~|k|\leq N,~~\iota_1=1,2,\cdots,d_i,~~\iota_2=1,2,\cdots,d_j.
	\end{eqnarray}
	In view of (\ref{20245271500}) and (\ref{20245271501}), applying Lemma \ref{20246251433} in Appendix to (\ref{20245271502}), we get $\hat{S'}(k)\in\mathcal{M}_{s,\beta}^+$ and
	\begin{eqnarray}\label{20245271536}
		|\hat{S'}(k)|_{s,\beta+}\lessdot \frac{N^{\frac{d^*}{2}}}{\kappa^{\frac{d^*}{2\beta}+1}}|\hat{F'}(k)|_{s,\beta}.
	\end{eqnarray}
Recalling (\ref{20245271514}) and (\ref{20245271517}), we get
	\begin{eqnarray}
		|\hat{S}_{\zeta\zeta}(k)|_{s,\beta+}\lessdot \frac{N^{\frac{d^*}{2}}}{\kappa^{\frac{d^*}{2\beta}+1}}|\hat{f}_{\zeta\zeta}(k)|_{s,\beta}.
	\end{eqnarray}
Furthermore, for any $\theta\in\mathbb{T}_{\sigma'}^n$,
	\begin{eqnarray}\label{20240712-4} |S_{\zeta\zeta}(\theta;\rho)|_{s,\beta+}\lessdot\frac{N^{\frac{d^*}{2}}}{\kappa^{\frac{d^*}{2\beta}+1}(\sigma-\sigma')^n}		\sup_{\theta\in\mathbb{T}_{\sigma}^n}|{f}_{\zeta\zeta}(\theta)|_{s,\beta}.
	\end{eqnarray}

For the estimate of $\partial_{\rho}S_{\zeta\zeta}$, we differentiate (\ref{20245271444}) and obtain for $|k|+|i-j|\neq0$,
	\begin{eqnarray}
		-\textbf{i}\langle k,\omega\rangle\partial_{\rho}\hat{S}_i^j(k;\rho)+\textbf{i} Q_i^i\partial_{\rho}\hat{S}_i^j(k;\rho)-\textbf{i}\partial_{\rho}\hat{S}_i^j(k;\rho)Q_j^j+\hat{R}_i^j(k;\rho)=0
	\end{eqnarray}
	with
$$\hat{R}_i^j(k;\rho):=\partial_{\rho}\hat{F}_i^j(k;\rho)-\textbf{i}\langle k,\partial_{\rho}\omega\rangle\hat{S}_i^j(k;\rho)+ \textbf{i}\partial_{\rho}Q_i^i\hat{S}_i^j(k;\rho)-\textbf{i}\hat{S}_i^j(k;\rho)\partial_{\rho}Q_j^j.$$
By (\ref{2024672045}), we have
	\begin{eqnarray}
		\|\hat{R}_i^j(k;\rho)\|\leq \|\partial_{\rho}\hat{F}_i^j(k;\rho)\|+(|k|M_{\omega}+2M_{\Omega})\|\hat{S}_i^j(k;\rho)\|,
	\end{eqnarray}
which implies
$\hat{R}(k;\rho)\in\mathcal{M}_{s,\beta}$ and
	\begin{eqnarray}\label{20245271537}
		|\hat{R}(k;\rho)|_{s,\beta}\leq |\partial_{\rho}\hat{F}(k;\rho)|_{s,\beta}+(|k|M_{\omega}+2M_{\Omega})|\hat{S}(k;\rho)|_{s,\beta}.
	\end{eqnarray}
In the same manner,	by Lemma \ref{20246251433} in Appendix, one has
	\begin{eqnarray}
		\begin{aligned}
			|\partial_{\rho}\hat{S}(k;\rho)|_{s,\beta+}&\lessdot \frac{N^{\frac{d^*}{2}}}{\kappa^{\frac{d^*}{2\beta}+1}}|\hat{R}(k;\rho)|_{s,\beta}\\
&\lessdot\frac{N^{\frac{d^*}{2}}}{\kappa^{\frac{d^*}{2\beta}+1}}|\partial_{\rho}\hat{F}(k;\rho)|_{s,\beta}
+\frac{N^{{d^*}}(1+|k|)}{\kappa^{\frac{d^*}{\beta}+2}}|\hat{F}(k;\rho)|_{s,\beta},
		\end{aligned}
	\end{eqnarray}
where (\ref{20245271517}), (\ref{20245271536}), (\ref{20245271537}) are used in the second inequality.
Therefore, for any $\theta\in\mathbb{T}_{\sigma'}^n$,
	\begin{eqnarray}\label{20240712-5}
    \begin{aligned}
		|\partial_{\rho} S_{\zeta\zeta}(\theta;\rho)|_{s,\beta+}\lessdot& \frac{N^{\frac{d^*}{2}}}{\kappa^{\frac{d^*}{2\beta}+1}(\sigma-\sigma')^{n}}\sup_{\theta\in\mathbb{T}_{\sigma}^n}|\partial_{\rho}{f}_{\zeta\zeta}(\theta)|_{s,\beta}\\
&+\frac{N^{{d^*}}}{\kappa^{\frac{d^*}{\beta}+2}(\sigma-\sigma')^{n+1}}\sup_{\theta\in\mathbb{T}_{\sigma}^n}|{f}_{\zeta\zeta}(\theta)|_{s,\beta}.
	\end{aligned}
    \end{eqnarray}
Summing the estimates \eqref{20240712-4} and \eqref{20240712-5}, for any $\theta\in\mathbb{T}_{\sigma'}^n$, we get
	\begin{eqnarray}\label{20240712-6}
			\sup_{\nu=0,1}|\partial_{\rho}^{\nu} S_{\zeta\zeta}(\theta;\rho)|_{s,\beta+}\lessdot \frac{N^{{d^*}}}{\kappa^{\frac{d^*}{\beta}+2}(\sigma-\sigma')^{n+1}}{\mu^{-2}}[f]_{\sigma,\mu,\mathcal{D}}^{s,\beta}.
	\end{eqnarray}

Finally, to sum up, we conclude that
\begin{eqnarray}\label{2024652148}
{[S]}_{\sigma',\mu,\mathcal{D}}^{s,\beta+}\lessdot
\frac{N^{{d^*}}}{\kappa^{\frac{d^*}{\beta}+2}(\sigma-\sigma')^{n+1}}[f]_{\sigma,\mu,\mathcal{D}}^{s,\beta}.
\end{eqnarray}

\subsection{New Hamiltonian}
Denoting the new Hamiltonian $H\circ\Phi$ in (\ref{20243211944}) still by $H$, we have
	\begin{eqnarray*}
		H=h_++f_+
	\end{eqnarray*}
with the new normal form $h_+:=h+\hat{h}$ and the new perturbation
	\begin{eqnarray}\label{2024652123}
	f_+:=(1-\Gamma_N)f^T+\int_{0}^{1}\{(1-t)(\hat{h}-\Gamma_Nf^T)+f^T,S\}\circ X_S^tdt+(f-f^T)\circ X_S^1.
	\end{eqnarray}
This subsection is devoted to estimating $\hat{h}$ and $f_+$.

	$\bullet$~~~The new normal form
	
	In view of (\ref{2024661530}), since the term $[\![f_{\theta}]\!]$ is a constant and does not affect the dynamics, we omit it and rewrite
	\begin{eqnarray}
		\hat{h}=\langle [\![f_{r}]\!],r\rangle+\frac{1}{2}\langle \zeta,B_{\zeta\zeta}\zeta\rangle.
	\end{eqnarray}
By (\ref{2024661532}) and the fact that $B_{\zeta\zeta}$ is block diagonal, for $\nu=0,1$, we get
	\begin{eqnarray*}
		\begin{aligned}
			\|\partial_{\rho}^{\nu}B_{\zeta\zeta}\zeta\|_s^2&=\sum_{j\geq 1}|j^{2s}(\partial_{\rho}^{\nu}B_{\zeta\zeta}\zeta)_j|^2\\
&=\sum_{j\geq 1}j^{2s}\|\partial_{\rho}^{\nu}(B_{\zeta\zeta})_j^j\|^2|\zeta_j|^2\\
&\leq |\partial_{\rho}^{\nu} \hat{f}_{\zeta\zeta}(0)|_{s,\beta}^2\|\zeta\|_s^2\\
&\leq \mu^{-4}([f^T]_{\sigma,\mu,\mathcal{D}}^{s,\beta})^2\|\zeta\|_s^2
		\end{aligned}
	\end{eqnarray*}
and thus
	\begin{eqnarray}\label{202466947}	[\hat{h}]_{\sigma,\mu,\mathcal{D}}^{s,\beta}\leq[f^T]_{\sigma,\mu,\mathcal{D}}^{s,\beta}\lessdot[f]_{\sigma,\mu,\mathcal{D}}^{s,\beta},
	\end{eqnarray}
where Lemma \ref{20243212004} is used in the last inequality.
	
	$\bullet$~~~The new perturbation
	
	Denote the three terms in the right hand side of (\ref{2024652123}) by $f_{1+},f_{2+},f_{3+}$. Assume $0<\sigma'<\sigma$ and $0<\mu'<\frac{\mu}{2}$. By the definition of the norm $[f]_{\sigma,\mu,\mathcal{D}}^{s,\beta}$ and Lemma \ref{20243212004} in Appendix, we have
	\begin{eqnarray}\label{2024672220}
		[f_{1+}]_{\sigma',\mu,\mathcal{D}}^{s,\beta}\lessdot \frac{e^{-\frac{1}{2}(\sigma-\sigma')N}}{(\sigma-\sigma')^n}[f^T]_{\sigma,\mu,\mathcal{D}}^{s,\beta}\lessdot \frac{e^{-\frac{1}{2}(\sigma-\sigma')N}}{(\sigma-\sigma')^n}[f]_{\sigma,\mu,\mathcal{D}}^{s,\beta}.
	\end{eqnarray}

Substituting $\sigma'$ by $\frac{2\sigma+\sigma'}{3}$ in the last subsection, the final conclusion (\ref{2024652148}) becomes
\begin{eqnarray}\label{20240712-8}
{[S]}_{\frac{2\sigma+\sigma'}{3},\mu,\mathcal{D}}^{s,\beta+}\lessdot
\frac{N^{{d^*}}}{\kappa^{\frac{d^*}{\beta}+2}(\sigma-\sigma')^{n+1}}[f]_{\sigma,\mu,\mathcal{D}}^{s,\beta}.
\end{eqnarray}
For the estimates of $f_{2+}$ and $f_{3+}$, we assume that
	\begin{eqnarray}\label{2024652149}
		[f]_{\sigma,\mu,\mathcal{D}}^{s,\beta}\leq \frac{(\beta-\beta')(\sigma-\sigma')^{n+2}(\mu')^2}{\tilde{c}_1}\cdot\frac{\kappa^{2+\frac{d^*}{\beta}}}{N^{d^*}}
	\end{eqnarray}
with a sufficiently large constant $\tilde{c}_1$, such that the inequalities  (\ref{20240712-8}) and (\ref{2024652149}) imply
	\begin{eqnarray}\label{202466938}
		[S]_{\frac{2\sigma+\sigma'}{3},\mu,\mathcal{D}}^{s,\beta+}\leq \frac{(\mu')^2(\sigma-\sigma')(\beta-\beta')}{c_1},
	\end{eqnarray}
where $c_1$ is the constant in \eqref{20243141942}. Denote
$$g_t:=(1-t)(\hat{h}-\Gamma_N f^T)+f^T=(1-t)\hat{h}+(1-t)(1-\Gamma_N)f^T+tf^T.$$
By (\ref{202466947}) and Lemma \ref{20243212004} in Appendix, we have
	\begin{eqnarray}\label{202466955}		[g_t]_{\frac{2\sigma+\sigma'}{3},\mu,\mathcal{D}}^{s,\beta}\lessdot\Big(1+\frac{e^{-\frac{1}{6}(\sigma-\sigma')N}}{(\sigma-\sigma')^n}\Big)[f^T]_{\sigma,\mu,\mathcal{D}}^{s,\beta}\lessdot [f]_{\sigma,\mu,\mathcal{D}}^{s,\beta},
	\end{eqnarray}
where in the second inequality we assume
	\begin{eqnarray}\label{20240712-7}		N>\frac{6n}{\sigma-\sigma'}\ln\frac{1}{\sigma-\sigma'}.
	\end{eqnarray}
By (\ref{20240712-8}), (\ref{202466955}) and Lemma \ref{20243212055}, we have
\begin{eqnarray}\label{20240712-9} [\{g_t,S\}]_{\frac{\sigma+2\sigma'}{3},\mu,\mathcal{D}}^{s,\beta}
&\lessdot&\frac{1}{(\sigma-\sigma')}{\mu}^{-2}[g_t]_{\frac{2\sigma+\sigma'}{3},\mu,\mathcal{D}}^{s,\beta}[S]_{\frac{2\sigma+\sigma'}{3},\mu,\mathcal{D}}^{s,\beta+}\nonumber\\
&\lessdot&\frac{N^{{d^*}}\mu^{-2}}{\kappa^{\frac{d^*}{\beta}+2}(\sigma-\sigma')^{n+2}}([f]_{\sigma,\mu,\mathcal{D}}^{s,\beta})^2.		\end{eqnarray}
By Lemma \ref{20243212004} in Appendix, we have
	\begin{eqnarray}\label{2024672201}
		[f-f^T]_{\sigma,2\mu',\mathcal{D}}^{s,\beta}\lessdot \Big(\frac{\mu'}{\mu}\Big)^3[f]_{\sigma,\mu,\mathcal{D}}^{s,\beta}.
	\end{eqnarray}
By \eqref{202466938}, \eqref{20240712-9}, \eqref{2024672201} and Lemma \ref{20243212033}, we get
	\begin{eqnarray}\label{2024672222}
[f_{2+}]_{\sigma',\mu',\mathcal{D}}^{s,\beta'}&\lessdot& \frac{N^{{d^*}}\mu^{-2}}{\kappa^{\frac{d^*}{\beta}+2}(\sigma-\sigma')^{n+2}}([f]_{\sigma,\mu,\mathcal{D}}^{s,\beta})^2,\\
~[f_{3+}]_{\sigma',\mu',\mathcal{D}}^{s,\beta'}&\lessdot& \Big(\frac{\mu'}{\mu}\Big)^3[f]_{\sigma,\mu,\mathcal{D}}^{s,\beta}.\label{2024672221}
	\end{eqnarray}

Finally, by (\ref{2024672220}), (\ref{2024672222}) and (\ref{2024672221}), we get
	\begin{eqnarray}\label{20240712-10}
			[f_{+}]_{\sigma',\mu',\mathcal{D}}^{s,\beta'}\lessdot \Big(\frac{e^{-\frac{1}{2}(\sigma-\sigma')N}}{(\sigma-\sigma')^n}+ \frac{N^{{d^*}}\mu^{-2}}{\kappa^{\frac{d^*}{\beta}+2}(\sigma-\sigma')^{n+2}}[f]_{\sigma,\mu,\mathcal{D}}^{s,\beta}+\Big(\frac{\mu'}{\mu}\Big)^3\Big)[f]_{\sigma,\mu,\mathcal{D}}^{s,\beta}.
	\end{eqnarray}

\section{Iteration and Convergence}
	Now we give the precise set-up of iteration parameters. Let $m \geq 0$ be the $m$-th KAM step.
	\begin{itemize}
\item[]		
$M_{\omega,m}={M_{\omega,0}}(2-2^{-m})$, which is used to dominate the norm of tangential frequencies,

\item[]
$L_m=L_{0}(2-2^{-m})$, which is used to dominate the inverse of tangential frequencies,

\item[]
$M_{\Omega,m}=M_{\Omega,0}(2-2^{-m})$, which is used to dominate the norm of normal frequencies,

\item[]
$\sigma_{m}=\frac{\sigma_0}{2}(1+2^{-m})$, which is used to dominate the width of the angle variable $\theta$,

\item[]
$\beta_{m}=\frac{\beta_0}{2}(1+2^{-m})$, which is used to quantify the regularizing effect of Hessian matrix.
\end{itemize}
Denote $E_{m}^0=c_1c_2(\beta_m-\beta_{m+1})^{-1}(\sigma_m-\sigma_{m+1})^{-\alpha(n+d^*+2)}$ and $E_{m}=c_2^3E_{m}^0$, where $c_1$ is the constant in \eqref{20243141942}, and $c_2$ triples the largest of all the constants being implicit in the notation $\lessdot$ in the last section. For fixed $0<\mu_0\leq1$ and $0<\varepsilon_0<1$, define $\mu_m$ and $\varepsilon_m$ inductively
	\begin{itemize}
\item[]		
$\mu_{m+1}=\mu_m\big(\frac{\varepsilon_m}{\mu_m^2}\big)^{\frac{1}{4}}(E_m^0)^{\frac{1}{3}}$, which is used to dominate the size of $r$ and $\zeta$,

\item[]
$\frac{\varepsilon_{m+1}}{\mu_{m+1}^2}=\big(\frac{\varepsilon_m}{\mu_m^2}\big)^{\frac{5}{4}}(E_m)^{\frac{1}{3}}$, which is used to dominate the size of perturbation.
\end{itemize}
Denote $\mathcal{O}_m=\mathcal{O}^s(\sigma_{m},\mu_{m})$. Moreover,
\begin{itemize}
        \item[]
        $N_{m}=\frac{2\ln\big(\frac{\mu_m^2}{\varepsilon_m}\big)}{\sigma_{m}-\sigma_{m+1}}$, which is the length of the truncation of Fourier series,
        \item[]
        $\kappa_{m}=\big(\frac{\varepsilon_m}{\mu_m^2}\big)^{\frac{1}{8(2+\frac{d^*}{\beta_m})}}$, which is used to dominate the measure of removed parameters.
\end{itemize}

\subsection{Iteration lemma}	
	\begin{lem}\label{20246101129}
		Assume that
		\begin{eqnarray}\label{20246101028}
			\frac{\varepsilon_0}{\mu_0^2}\leq \frac{\gamma_0}{16}\prod_{k=0}^{\infty}E_{k}^{-\frac{1}{3(\frac{5}{4})^{k+1}}},
		\end{eqnarray}
		where $\gamma_0$ is small and depends on $n,d^*,M_{\omega,0},L_0,M_{\Omega,0},\beta_0,\sigma_0,\mu_0$. Suppose the Hamiltonian $h_m+f_m$ is regular on $\mathcal{O}_m\times\mathcal{D}_m$, where
$h_m=\langle \omega_m,r\rangle+\frac{1}{2}\langle \zeta,A_m\zeta\rangle$
is on normal form with $\omega_m$ and
$$(A_m)_j^j(\rho):=\lambda_jI+(\tilde{A}_m)_j^j(\rho),~~~~~~j\geq1$$
satisfying
		\begin{eqnarray}\label{20246111949}				&&\sup_{\nu=0,1,\rho\in\mathcal{D}_m}|\partial_{\rho}^{\nu}\omega_m(\rho)|\leq M_{\omega,m},\\
&&\sup_{\rho\in\omega_m(\mathcal{D}_m)}|\partial_{\rho}\omega_m^{-1}(\rho)|\leq L_m,\label{20240714-1}\\
&&\sup_{\nu=0,1,\rho\in\mathcal{D}_m}\|\partial_{\rho}^{\nu}(\tilde{A}_m)_j^j(\rho)\|\leq \frac{M_{\Omega,m}}{j^{\beta_m}},\label{20246111950}
		\end{eqnarray}
and $f_m\in\mathcal{T}_{\sigma_m,\mu_m,\mathcal{D}_m}^{s,\beta_m}$ satisfying
		\begin{eqnarray}\label{20240719-6}
			[f_m]_{\sigma_m,\mu_m,\mathcal{D}_m}^{s,\beta_m}\leq \varepsilon_m.
		\end{eqnarray}
Let $Q_m$ be the complex valued normal form matrix associated to $A_m$, and for every $j\geq1$, denote the spectra of $(Q_m)_j^j$ by $\Omega_{m,j,\iota}(\rho)$, $1\leq\iota\leq d_j$. Then there exist a closed subset
		\begin{eqnarray}\label{20246191530}
			\mathcal{D}_{m+1}=\mathcal{D}_m\backslash \bigcup_{|k|\leq N_{m}}\mathfrak{R}_{k}^{m}(\kappa_{m}),
		\end{eqnarray}
		 where		
		\begin{eqnarray*}
			\mathfrak{R}_{k}^{m}(\kappa_{m})=\mathfrak{R}_{k}^{m,0}\cup \mathfrak{R}_{k}^{m,1}\cup \mathfrak{R}_{k}^{m,20}\cup \mathfrak{R}_{k}^{m,11}
		\end{eqnarray*}
		with		
		\begin{eqnarray}\label{20246191517}
			\begin{aligned}
				&\mathfrak{R}_{k}^{m,0}=\Big\{\rho\in\mathcal{D}_m:~|\langle k,\omega_{m}(\rho)\rangle|<\kappa_{m}\Big\},\\
&\mathfrak{R}_{k}^{m,1}=\bigcup_{(j,\iota)\in\mathfrak{L}}\Big\{\rho\in\mathcal{D}_m:~|\langle k,\omega_{m}(\rho)\rangle+\Omega_{m,j,\iota}(\rho)|< \kappa_{m} j\Big\},\\
&\mathfrak{R}_{k}^{m,20}=\bigcup_{(i,\iota_1),(j,\iota_2)\in\mathfrak{L}}\Big\{\rho\in\mathcal{D}_m:~|\langle k,\omega_{m}(\rho)\rangle+\Omega_{m,i,\iota_1}(\rho)+\Omega_{m,j,\iota_2}(\rho)|<\kappa_{m}(i+j)\Big\},\\
&\mathfrak{R}_{k}^{m,11}=\bigcup_{(i,\iota_1),(j,\iota_2)\in\mathfrak{L}\atop
|k|+|i-j|\neq 0}\Big\{\rho\in\mathcal{D}_m:~|\langle k,\omega_{m}(\rho)\rangle-\Omega_{m,i,\iota_1}(\rho)+\Omega_{m,j,\iota_2}(\rho)|<\kappa_{m}(1+|i-j|)\Big\},
			\end{aligned}
		\end{eqnarray}
and a $C^1$-Whitney smooth family of real symplectic transformations $\Phi_{m+1}:\mathcal{O}_{m+1}\times\mathcal{D}_{m+1}\to \mathcal{O}_m$,	such that %
		\begin{eqnarray}
			(h_m+f_m)\circ\Phi_{m+1}=h_{m+1}+f_{m+1}=\langle \omega_{m+1},r\rangle+\frac{1}{2}\langle \zeta,A_{m+1}\zeta\rangle +f_{m+1},
		\end{eqnarray}
where the estimate		
		\begin{eqnarray}\label{20246101052}
			[h_{m+1}-h_m]_{\sigma_m,\mu_m,\mathcal{D}_m}^{s,\beta_m}\leq c_2\varepsilon_m
		\end{eqnarray}
		holds true and the same assumptions as above are satisfied with $m+1$ in place of $m$.
	\end{lem}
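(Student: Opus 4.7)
The strategy is to apply once the KAM step from Section~5 and then verify that each inductive hypothesis propagates from level~$m$ to level~$m+1$. First I would define $\mathcal{D}_{m+1}$ by \eqref{20246191530}--\eqref{20246191517}, so that for every $\rho\in\mathcal{D}_{m+1}$ and every $|k|\leq N_m$ the four small-divisor conditions used in Section~5.1 are simultaneously satisfied with $\kappa=\kappa_m$. The homological equation \eqref{20240709-4} is then solvable on $\mathcal{D}_{m+1}$, and the estimate \eqref{2024652148} applied with $\sigma'=\sigma_{m+1}$ yields the desired bound on $[S_m]^{s,\beta_m+}$. The hypothesis \eqref{20246101028} on $\varepsilon_0/\mu_0^{2}$, combined with the explicit definitions of $N_m,\kappa_m,\mu_m,\varepsilon_m$, is engineered so that \eqref{2024652149} is automatic, hence \eqref{202466938} holds, and Lemma~\ref{20243181426} (applied with $\beta=\beta_m$, $\beta'=\beta_{m+1}$) produces a real analytic symplectic map $\Phi_{m+1}:=X_{S_m}^{1}:\mathcal{O}_{m+1}\times\mathcal{D}_{m+1}\to\mathcal{O}_{m}$.

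Second, composing with $\Phi_{m+1}$ transforms $h_m+f_m$ into $h_m+\hat{h}_m+f_{m+1}$, where (discarding the harmless constant $[\![(f_m)_{\theta}]\!]$)
\begin{equation*}
\hat{h}_m=\langle[\![(f_m)_r]\!],r\rangle+\tfrac{1}{2}\langle\zeta,(B_{\zeta\zeta})_m\zeta\rangle
\end{equation*}
and $f_{m+1}$ is given by \eqref{2024652123}. I would set $\omega_{m+1}:=\omega_m+[\![(f_m)_r]\!]$ and $(\tilde{A}_{m+1})_j^{j}:=(\tilde{A}_m)_j^{j}+((B_{\zeta\zeta})_m)_j^{j}$; the estimate \eqref{20246101052} is then a direct consequence of \eqref{202466947}. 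To verify \eqref{20246111949}, \eqref{20240714-1}, \eqref{20246111950} at level $m+1$, I telescope the increments over $k=0,\dots,m$: $|\omega_{k+1}-\omega_k|$ and $|\partial_\rho(\omega_{k+1}-\omega_k)|$ are both controlled by $\varepsilon_k/\mu_k^{2}$, while $\|\partial_\rho^{\nu}((\tilde{A}_{k+1}-\tilde{A}_k))_j^{j}\|$ is controlled by $\varepsilon_k/(\mu_k^{2}j^{\beta_k})$ via \eqref{2024661532}. Summability of $\sum_k\varepsilon_k/\mu_k^{2}$, guaranteed by \eqref{20246101028}, forces the new constants to be bounded by $M_{\omega,0}(2-2^{-(m+1)})$ and $M_{\Omega,0}(2-2^{-(m+1)})$, while \eqref{20240714-1} at level $m+1$ follows from the standard inverse-function argument applied to an $O(\varepsilon_m/\mu_m^{2})$ perturbation of $\omega_m$. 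Since $\beta_m\geq\beta_{m+1}$, the decay rate $j^{-\beta_m}$ dominates $j^{-\beta_{m+1}}$, so the Hessian decay survives the shrinkage of $\beta$.

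Third, for the new perturbation I would invoke \eqref{20240712-10} together with the iteration identities. The three contributions on the right of \eqref{20240712-10} are: the Fourier tail, absorbed by the definition $N_m=2\ln(\mu_m^{2}/\varepsilon_m)/(\sigma_m-\sigma_{m+1})$; the Poisson-bracket term, whose factor $N_m^{d^{*}}\kappa_m^{-(2+d^{*}/\beta_m)}$ is absorbed via the definition of $\kappa_m$; and the higher-order residue $(\mu_{m+1}/\mu_m)^{3}\varepsilon_m$, absorbed by the definition of $\mu_{m+1}$. All three are dominated by $(\varepsilon_m/\mu_m^{2})^{5/4}E_m^{1/3}\mu_{m+1}^{2}=\varepsilon_{m+1}$, which is exactly \eqref{20240719-6} at level $m+1$. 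The auxiliary smallness conditions \eqref{20240712-7} and \eqref{2024652149} needed along the way are all encoded in the choice of $\gamma_0$ in \eqref{20246101028}.

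The main obstacle is the careful management of the shrinking regularity parameter $\beta_m\downarrow\beta_0/2$. Lemma~\ref{20243212033} strictly loses a fraction of the regularizing effect at every step, the factor $(\beta_m-\beta_{m+1})^{-1}$ enters $E_m$, and the exponent $d^{*}/\beta_m+2$ in \eqref{2024652148} degrades as $m$ grows. One must ensure simultaneously that (i) $\beta_\infty=\beta_0/2>0$, so that a strictly positive regularizing margin is maintained at the limit; (ii) the infinite product in \eqref{20246101028} converges, which forces the sub-quadratic recursion $\varepsilon_{m+1}/\mu_{m+1}^{2}=(\varepsilon_m/\mu_m^{2})^{5/4}E_m^{1/3}$ rather than a standard quadratic one; and (iii) $\mu_m$ stays bounded away from zero, so that the limiting invariant tori live in a nontrivial domain. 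The geometric shrinkage rate chosen for $\beta_m$ and the particular dependence of $E_m$ on $(\beta_m-\beta_{m+1})$ and $(\sigma_m-\sigma_{m+1})$ are dictated precisely by these three constraints.
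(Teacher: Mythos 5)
Your overall strategy is correct and follows the paper's own route: define $\mathcal{D}_{m+1}$ by excising the resonant sets, solve the homological equations under the non-resonance conditions with $\kappa=\kappa_m$, verify the smallness conditions \eqref{2024652149} and \eqref{20240712-7} from the iteration parameters, apply Lemma~\ref{20243181426} with $\beta=\beta_m,\ \beta'=\beta_{m+1}$ to obtain $\Phi_{m+1}$, read off \eqref{20246101052} from \eqref{202466947}, propagate the frequency/normal-form bounds using the increment estimates \eqref{20246111948}--\eqref{20240719-5}, and absorb the three contributions in \eqref{20240712-10} into $\varepsilon_{m+1}$ via the identities \eqref{20247151940}. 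The key observation that $j^{-\beta_m}\leq j^{-\beta_{m+1}}$ makes the decay bound survive the shrinkage of $\beta$ is also correctly invoked.

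However, your constraint (iii) is factually wrong. From the recursion $\mu_{m+1}=\mu_m(\varepsilon_m/\mu_m^2)^{1/4}(E_m^0)^{1/3}$ together with \eqref{20246101040}, the ratio $\mu_{m+1}/\mu_m\leq(\gamma_0/2^{4m+4})^{1/4}\to0$, so $\mu_m\to0$ super-exponentially; it does not stay bounded away from zero. The nontrivial limit domain $\mathcal{O}^s(\sigma_0/2,\mu_0/2)$ in Theorem~\ref{2024371454} is obtained not by keeping $\mu_m$ bounded below but by the analytic extension of the flow map in part~(2) of Lemma~\ref{20243181426}: the paper extends each $\Phi_{m+1}$ from $\mathcal{O}^s(\sigma_{m+1},\mu_{m+1})$ to $\mathcal{O}^s(\sigma_{m+1},\mu_0)$ and proves uniform convergence of $\Phi^m$ there. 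Relatedly, your framing that ``convergence of the infinite product forces the $5/4$ exponent'' reverses the causality: the exponent $5/4$ is dictated by how the three error terms in \eqref{20240712-10} split against the definitions of $N_m,\kappa_m,\mu_{m+1}$; the convergence of the product in \eqref{20246101028} is then a consequence of the exponential growth of $E_k$ being dominated by the super-geometric denominators $(5/4)^{k+1}$, and would hold for any exponent strictly greater than $1$. Neither slip affects the iterative lemma itself, but the first would mislead you when you come to write up the convergence argument of Section~6.2.
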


	\begin{proof}
According to the induction formula of $\varepsilon_m$, namely, $\frac{\varepsilon_{m+1}}{\mu_{m+1}^2}=\big(\frac{\varepsilon_m}{\mu_m^2}\big)^{\frac{5}{4}}(E_m)^{\frac{1}{3}}$, we get for $m\geq1$,
\begin{eqnarray}\label{20240715-1}
\frac{\varepsilon_m}{\mu_m^2}=\Big(\frac{\varepsilon_0}{\mu_0^2}\prod_{k=0}^{m-1}E_k^{\frac{1}{3(\frac{5}{4})^{k+1}}}\Big)^{(\frac{5}{4})^m}.
\end{eqnarray}
Then by (\ref{20246101028}), (\ref{20240715-1}) and the fact $E_m$ is increasing with respect to $m$, we get
\begin{eqnarray}\label{20240715-2}
\begin{aligned}
\frac{\varepsilon_m}{\mu_m^2}&\leq
\Big(\frac{\gamma_0}{16}\prod_{k=m}^{\infty}E_k^{-\frac{1}{3(\frac{5}{4})^{k+1}}}\Big)^{(\frac{5}{4})^m}\\
&\leq\big(\frac{\gamma_0}{16}\big)^{(\frac{5}{4})^m}
\Big(\prod_{k=m}^{\infty}E_m^{-\frac{1}{3(\frac{5}{4})^{k+1}}}\Big)^{(\frac{5}{4})^m}\\
&=\big(\frac{\gamma_0}{16}\big)^{(\frac{5}{4})^m}E_m^{-\frac{4}{3}},
\end{aligned}
\end{eqnarray}
which implies for $\gamma_0$ small enough,		
		\begin{eqnarray}\label{20246101040} \frac{\varepsilon_m}{\mu_m^2}E_m^{\frac{4}{3}}\leq\frac{\gamma_0}{2^{4m+4}}.
		\end{eqnarray}
Furthermore, by \eqref{20246101040}, we know
\begin{eqnarray}\label{20240719-1}
2\ln\big(\frac{\mu_m^2}{\varepsilon_m}\big)<\big(\frac{\varepsilon_m}{\mu_m^2}\big)^{-\frac{1}{4\alpha(n+d^*+2)}}
\end{eqnarray}
and
\begin{eqnarray}\label{20240719-2}
(\sigma_{m}-\sigma_{m+1})^{-1}<\big(E_m\big)^{\frac{1}{\alpha(n+d^*+2)}}
<\big(\frac{\varepsilon_m}{\mu_m^2}\big)^{-\frac{3}{4\alpha(n+d^*+2)}}.
\end{eqnarray}
By \eqref{20240719-1} and \eqref{20240719-2}, we get
		\begin{eqnarray}\label{20246132026}	N_m^{n+d^*+2}=\Big(\frac{2\ln(\frac{\mu_m^2}{\varepsilon_m})}{\sigma_{m}-\sigma_{m+1}}\Big)^{n+d^*+2}
<\big(\frac{\varepsilon_m}{\mu_m^2}\big)^{-\frac{1}{\alpha}},
		\end{eqnarray}
which imply
		\begin{eqnarray}\label{20240719-3}	N_m^{d^*}<\big(\frac{\varepsilon_m}{\mu_m^2}\big)^{-\frac{1}{8}}.
		\end{eqnarray}
Then, by (\ref{20246101040}), (\ref{20240719-3}) and the facts
\begin{eqnarray}
&&\frac{\varepsilon_m}{\mu_{m+1}^2}E_m^0=\big(\frac{\varepsilon_m}{\mu_m^2}\big)^{\frac{1}{2}}(E_m^0)^{\frac{1}{3}},\\
&&\kappa_{m}^{-2-\frac{d^*}{\beta_{m}}}=\big(\frac{\varepsilon_m}{\mu_m^2}\big)^{-\frac{1}{8}},\label{20240721-1}
\end{eqnarray}
we get
		\begin{eqnarray}\label{20246121113}	\frac{\varepsilon_m}{\mu_{m+1}^2}\kappa_{m}^{-2-\frac{d^*}{\beta_{m}}}N_m^{d^*}E_m^0
\leq\big(\frac{\varepsilon_m}{\mu_m^2}\big)^{\frac{1}{4}}(E_m^0)^{\frac{1}{3}}<1,
		\end{eqnarray}
which implies that for each $m\geq 0$, the smallness condition \eqref{2024652149} is satisfied. Moreover, by (\ref{20246101040}), we get
\begin{eqnarray}\label{20240719-4}
N_m(\sigma_{m}-\sigma_{m+1})=2\ln\big(\frac{\mu_m^2}{\varepsilon_m}\big)>\frac{8}{3}\ln{E_m}>6n\ln{\frac{1}{\sigma_{m}-\sigma_{m+1}}},
\end{eqnarray}
which implies that for each $m\geq 0$, the condition \eqref{20240712-7} is satisfied.

Therefore, there exists a transformation $\Phi_{m+1}:\mathcal{O}_{m+1}\times\mathcal{D}_{m+1}\to \mathcal{O}_m$, taking $h_m+f_m$ into $h_{m+1}+f_{m+1}$. Firstly, by \eqref{202466947}, we get the estimate (\ref{20246101052}) and
		\begin{eqnarray}\label{20246111948}	&&\sup_{\nu=0,1,\rho\in\mathcal{D}_m}|\partial_{\rho}^{\nu}(\omega_{m+1}(\rho)-\omega_{m}(\rho))|
\leq\frac{\varepsilon_m}{\mu_m^2},\\
&&\sup_{\nu=0,1,\rho\in\mathcal{D}_m}\|\partial_{\rho}^{\nu}(A_{m+1}(\rho)-A_m(\rho))_j^j\|\leq \frac{\varepsilon_m}{\mu_m^2 j^{\beta_{m}}}.\label{20240719-5}
		\end{eqnarray}
		From (\ref{20246101040}), (\ref{20246111948}) and \eqref{20240719-5}, for $\gamma_0$ small enough, the estimates (\ref{20246111949})-(\ref{20246111950})  are satisfied with $m+1$ replacing $m$.
Next, we give the estimate of the new perturbation. In view of (\ref{20240712-10}), by the following facts
\begin{eqnarray}\label{20247151940}
\begin{aligned}	&\frac{e^{-\frac{1}{2}(\sigma_m-\sigma_{m+1})N_m}}{(\sigma_m-\sigma_{m+1})^n}
=\frac{1}{(\sigma_m-\sigma_{m+1})^n}\cdot\frac{\varepsilon_{m}}{\mu_{m}^2}\leq\frac{\varepsilon_{m}}{\mu_{m}^2}E_m^0,\\ &\Big(\frac{\mu_{m+1}}{\mu_{m}}\Big)^3
=\Big(\frac{\varepsilon_m}{\mu_m^2}\Big)^{\frac{3}{4}}E_m^0,\\ &\frac{N_m^{{d^*}}\mu_m^{-2}}{\kappa_m^{2+\frac{d^*}{\beta_m}}(\sigma_m-\sigma_{m+1})^{n+2}}[f_m]_{\sigma_m,\mu_m,\mathcal{D}_m}^{s,\beta_m}
\leq\Big(\frac{\varepsilon_m}{\mu_ m^2}\Big)^{\frac{3}{4}}E_m^0,
\end{aligned}
\end{eqnarray}
we have
		\begin{eqnarray}
			\begin{aligned}
~[f_{m+1}]_{\sigma_{m+1},\mu_{m+1},\mathcal{D}_{m+1}}^{s,\beta_{m+1}}
&\leq\frac{c_2}{3}\Big(\frac{\varepsilon_m}{\mu_m^2}E_m^0+\Big(\frac{\varepsilon_m}{\mu_m^2}\Big)^{\frac{3}{4}}E_m^0+\Big(\frac{\varepsilon_m}{\mu_m^2}\Big)^{\frac{3}{4}}E_m^0\Big)\cdot\varepsilon_m\\
&\leq c_2\Big(\frac{\varepsilon_m}{\mu_m^2}\Big)^{\frac{3}{4}}E_m^0\varepsilon_m\\
&=\varepsilon_{m+1},
			\end{aligned}
		\end{eqnarray}
which is \eqref{20240719-6} with $m+1$ replacing $m$.
	\end{proof}

\subsection{Convergence}
	Now we are in a position to prove the KAM theorem. To apply the
	iterative lemma with $m=0$, set
	\begin{eqnarray}
		h_0=h,~f_0=f,~M_{\omega,0}=M_{\omega},~L_0=L,~M_{\Omega,0}=M_{\Omega},\\
~\sigma_0=\sigma,~\mu_0=\mu,~\beta_0=\beta,~\mathcal{D}_0=\mathcal{D},~\varepsilon_0=\varepsilon=[f]_{\sigma_0,\mu_0,\mathcal{D}_0}^{s_0,\beta_0}.
	\end{eqnarray}
	In view of \eqref{20240709-1},\eqref{20240709-2} and \eqref{20240709-3}, we know that (\ref{20246111949})-(\ref{20246111950}) with $m=0$ are satisfied. There exists $\varepsilon^*>0$ depending on $n$, $s$, $b_0$, $b_1$, $c^*$, $d^*$, $M_{\omega}$, $L$, $M_{\Omega}$, $\beta$, $\sigma$, $\mu$ such that if $0<\varepsilon<\varepsilon^*$, the assumption (\ref{20246101028}) is satisfied. Hence, the iterative lemma applies, and we obtain a decreasing sequence of domain $\mathcal{O}_m\times\mathcal{D}_m$ and a sequence of transformations $\Phi_{m+1}$ defined on $\mathcal{O}^s(\sigma_{m+1},\mu_{m+1})$ for each $m\geq 0$. Denoting
$$\Phi_{m+1}=\Phi_{S_{m+1}}^1:~(\theta_{m+1},r_{m+1},\zeta_{m+1})\mapsto(\theta_{m},r_{m},\zeta_{m}),$$ %
then by \eqref{20240712-8} we have
\begin{eqnarray}\label{20240720-1}
\begin{aligned}
~[S_{m+1}]_{\frac{2\sigma_{m}+\sigma_{m+1}}{3},\mu_{m},\mathcal{D}_{m+1}}^{s,\beta_{m}+}
&\leq\frac{c_2N_m^{{d^*}}}{3\kappa_m^{2+\frac{d^*}{\beta_m}}(\sigma_m-\sigma_{m+1})^{n+1}}\varepsilon_m\\
&\leq(\sigma_m-\sigma_{m+1})\Big(\frac{\varepsilon_m}{\mu_m^2}\Big)^{-\frac{1}{4}}\varepsilon_mE_m^{\frac{1}{4}},
\end{aligned}
\end{eqnarray}
where in the last inequality we use \eqref{20240719-3}, \eqref{20240721-1} and the fact
$$\frac{c_2}{3}(\sigma_m-\sigma_{m+1})^{-n-2}<E_m^{\frac{1}{4}}.$$
Furthermore, by \eqref{20243172034} and \eqref{20240720-1}, the transformation
$\Phi_{m+1}$ has an analytic extension to $\mathcal{O}^s(\sigma_{m+1},\mu_0)$ and verifies on this set
		\begin{eqnarray}\label{20246121114}
			\begin{aligned}
&|\theta_{m+1}-\theta_m|\leq (\sigma_m-\sigma_{m+1})\Big(\frac{\varepsilon_m}{\mu_m^2}\Big)^{\frac{3}{4}}E_m^{\frac{1}{4}},\\
&|r_{m+1}-r_m|\leq12\Big(1+\frac{\mu_0}{\mu_m}+2\Big(\frac{\mu_0}{\mu_m}\Big)^2\Big)\Big(\frac{\varepsilon_m}{\mu_m^2}\Big)^{-\frac{1}{4}}\varepsilon_mE_m^{\frac{1}{4}},\\
&\|\zeta_{m+1}-\zeta_m\|_{s}\leq (\sigma_{m}-\sigma_{m+1})\Big(2\mu_m+\frac{2^{s+5}}{\beta_m-\beta_{m+1}}\mu_0\Big)\Big(\frac{\varepsilon_m}{\mu_m^2}\Big)^{\frac{3}{4}}E_m^{\frac{1}{4}}.
			\end{aligned}
		\end{eqnarray}
By (\ref{20246121114}), there exists a constant $C>0$ only depending on $s$, $\beta_0$ such that
	\begin{eqnarray}\label{20240721-2}
\sup_{\mathcal{O}^s(\sigma_{m+1},\mu_0)}\|\Phi_{m+1}-id\|_s
\leq C\Big(\frac{\varepsilon_m}{\mu_m^2}\Big)^{\frac{3}{4}}E_m^{\frac{1}{4}},
	\end{eqnarray}
which implies
	\begin{eqnarray}\label{20246131658}
\sup_{\mathcal{O}^s(\sigma_{m+1},\mu_0)}\|\Phi_{m+1}-id\|_s
\leq C\Big(\frac{\varepsilon_m}{\mu_m^2}E_m^{\frac{4}{3}}\Big)^{\frac{3}{4}}
<\frac{\gamma_0^{\frac{1}{2}}}{2^{3m+3}},
	\end{eqnarray}
where in the last inequality we use \eqref{20246101040} and the fact $\gamma_0$ is sufficiently small. By Cauchy estimate, we have
	\begin{eqnarray}\label{20246131702}	\sup_{\mathcal{O}^s(\frac{\sigma_{m+1}+\sigma_{m+2}}{2},\frac{3\mu_0}{4})}\|D\Phi_{m+1}-I\|_{\mathcal{L}(\mathbb{C}^{2n}\times Y_s,\mathbb{C}^{2n}\times Y_s)}
<\frac{\gamma_0^{\frac{1}{3}}}{2^{2m+2}}.
	\end{eqnarray}

Now for every $m\geq 1$, let us denote $\Phi^m=\Phi_1\circ\Phi_2\circ\cdots\circ\Phi_m$. By (\ref{20246131658}) and the fact $\gamma_0$ is sufficiently small, the transformations $\Phi_m$ are from $\mathcal{O}^s(\sigma_{m+1},\frac{\mu_0}{2}(1+2^{-m-1}))$ to $\mathcal{O}^s(\sigma_{m},\frac{\mu_0}{2}(1+2^{-m}))$ and $\Phi^m$ are from $\mathcal{O}^s(\sigma_{m+1},\frac{\mu_0}{2}(1+2^{-m-1}))$ to $\mathcal{O}^s(\sigma_{0},\frac{3\mu_0}{4})$. By construction, the map $\Phi^m$ transforms the original Hamiltonian
$H=\langle\omega,r\rangle+\frac{1}{2}\langle\zeta,A\zeta\rangle+f$
into
	\begin{eqnarray}
		H_m=\langle\omega_m,r\rangle+\frac{1}{2}\langle\zeta,A_m\zeta\rangle+f_m
	\end{eqnarray}
	with
	\begin{eqnarray}\label{20246131722}	\omega_m=\omega+[\![\nabla_{r}f_0(0;\rho)]\!]+\cdots+[\![\nabla_{r}f_{m-1}(0;\rho)]\!],
	\end{eqnarray}
	\begin{eqnarray}\label{20246131723}
		A_m=A+(B_{\zeta\zeta})_0+\cdots+(B_{\zeta\zeta})_{m-1},
	\end{eqnarray}
	and
	\begin{eqnarray}\label{20246131724}
		[f_m]_{\sigma_{m+1},\mu_{m},\mathcal{D}_m}^{s,\beta_{m}}\leq \varepsilon_{m}.
	\end{eqnarray}
	By (\ref{20246131702}) and the chain rule, ones obtain
	\begin{eqnarray}\label{20240720-2}
		\begin{aligned}	\sup_{\mathcal{O}^s(\sigma_{m+1},\frac{\mu_0}{2}(1+2^{-m-1}))}\|D\Phi^m\|_{\mathcal{L}(\mathbb{C}^{2n}\times Y_s,\mathbb{C}^{2n}\times Y_s)}
&\leq\prod_{k=1}^{m}\sup_{\mathcal{O}^s(\sigma_{k+1},\frac{3\mu_0}{4})}\|D\Phi_k\|_{\mathcal{L}(\mathbb{C}^{2n}\times Y_s,\mathbb{C}^{2n}\times Y_s)}\\
&<\prod_{k=1}^{\infty}(1+\frac{\gamma_0^{\frac{1}{3}}}{2^{2k}})\\
&<2.		
		\end{aligned}
	\end{eqnarray}
Furthermore, by (\ref{20240721-2}), \eqref{20240720-2} and the mean value theorem, we get
	\begin{eqnarray}\label{20246131718}
		\begin{aligned} &\sup_{\mathcal{O}^s(\sigma_{m+2},\frac{\mu_0}{2}(1+2^{-m-2}))}\|\Phi^{m+1}-\Phi^{m}\|_s\\
&\leq\sup_{\mathcal{O}^s(\sigma_{m+1},\frac{\mu_0}{2}(1+2^{-m-1}))}\|D\Phi^m\|_{\mathcal{L}(\mathbb{C}^{2n}\times Y_s,\mathbb{C}^{2n}\times Y_s)}\sup_{\mathcal{O}^s(\sigma_{m+2},\frac{\mu_0}{2}(1+2^{-m-2}))}\|\Phi_{m+1}-id\|_s\\
&\leq2C\Big(\frac{\varepsilon_m}{\mu_m^2}\Big)^{\frac{3}{4}}E_m^{\frac{1}{4}}\\
&\leq\frac{\varepsilon_0^{\frac{1}{2}}}{2^{m+1}},			
		\end{aligned}
	\end{eqnarray}
where we denote $\Phi^0=id$ and in the last inequality we use the facts
$$2C\Big(\frac{\varepsilon_m}{\mu_m^2}E_m\Big)^{\frac{1}{4}}<\frac{\mu_0}{2^{m+1}}, ~~~~~~\Big(\frac{\varepsilon_m}{\mu_m^2}\Big)^{\frac{1}{2}}\leq\Big(\frac{\varepsilon_0}{\mu_0^2}\Big)^{\frac{1}{2}}.$$ %
This shows that $\Phi^m$ converge uniformly on $\mathcal{O}^s(\frac{\sigma_{0}}{2},\frac{\mu_0}{2})\times\mathcal{D}_*$, where $\mathcal{D}_*=\cap_{m\geq0}\mathcal{D}_m$. Let $\Phi=\lim_{m\to+\infty}\Phi^m$. Summing in \eqref{20246131718} with respect to $m\geq0$, we get $\|\Phi-id\|_s\leq\varepsilon_0^{\frac{1}{2}}$. Since the original Hamiltonian $H$ is analytic in $\mathcal{O}^s(\sigma_{0},\mu_0)$,
	\begin{eqnarray*}
		H\circ\Phi&=&\lim_{m\to+\infty} H\circ\Phi^m\\
&=&\lim_{m\to+\infty}\langle\omega_m,r\rangle+\lim_{m\to+\infty}\frac{1}{2}\langle\zeta,A_m\zeta\rangle+\lim_{m\to+\infty}f_m\\
&=&\langle\omega_*,r\rangle+\frac{1}{2}\langle\zeta,A_*\zeta\rangle+f_*.
	\end{eqnarray*}
Summing in \eqref{20246111948}, \eqref{20240719-5} with respect to $m\geq0$, we get the estimates of $\omega_*$, $A_*$ in \eqref{20240721-7} and \eqref{20240721-8}. Moreover, for $r=\zeta=0$ and $|\Im\theta|<\frac{\sigma_{0}}{2}$, by \eqref{20246101040}, (\ref{20246131724}) and Cauchy estimates, we have
	\begin{eqnarray*}	|\partial_{r}f_m|,~~\|\partial_{\zeta}f_m\|_s,~~|\nabla^2_{\zeta}f_m|_{s,\beta_{m}}
\leq\frac{\varepsilon_{m}}{\mu_{m}^2}\cdot\frac{2^{m+2}}{\sigma_0}
<\frac{\gamma_0}{2^{3m+2}\sigma_0},
	\end{eqnarray*}
which leads to $\partial_r{f_*}=\partial_{\zeta}{f_*}=\partial^2_{\zeta}{f_*}=0$ by letting $m\to+\infty$.

\section{Measure estimate}
	From (\ref{20246191530}), we know that
	\begin{eqnarray}
		\mathcal{D}\backslash \mathcal{D}_*=\bigcup_{m\geq 0}\bigcup_{k\in\mathbb{Z}^n\atop |k|\leq N_m}\big(\mathfrak{R}_{k}^{m,0}\bigcup \mathfrak{R}_{k}^{m,1}\bigcup \mathfrak{R}_{k}^{m,20}\bigcup \mathfrak{R}_{k}^{m,11}\big),
	\end{eqnarray}
where $\mathfrak{R}_{k}^{m,0}$, $\mathfrak{R}_{k}^{m,1}$, $\mathfrak{R}_{k}^{m,20}$ and $\mathfrak{R}_{k}^{m,11}$ are defined in \eqref{20246191517}. In the following, we only estimate the set
	\begin{eqnarray}\label{20240716-1}
		\Theta:=\bigcup_{m\geq 0}\bigcup_{k\in\mathbb{Z}^n\atop |k|\leq N_m}{\mathfrak{R}}_k^{m,11}
	\end{eqnarray}
in detail, since the other three cases are much simpler.

Fix $m\geq 0$, $0<|k|\leq N_m$ and $\kappa>0$, we firstly study the set
	\begin{eqnarray}\label{20240716-2}
		\begin{aligned}	\bar{\mathfrak{R}}_k^m(\kappa)&:=\bigcup_{i,j\geq1}\bar{\mathfrak{R}}_{k,i,j}^{m}(\kappa)\\
&=\bigcup_{i,j\geq1}\big\{\rho\in\mathcal{D}_{m}:~|\langle k,\omega_{m}(\rho)\rangle-\lambda_i+\lambda_j|<\kappa(1+|i-j|)\big\}
		\end{aligned}
	\end{eqnarray}
	with $\lambda_i=b_1i+b_0$ and $\lambda_j=b_1j+b_0$.
	\begin{lem}\label{20246152222}		
		Assuming $\kappa<\frac{b_1}{3}$, then we have		
		\begin{eqnarray}\label{20247132036}
			\text{meas}\big(\bar{\mathfrak{R}}_k^m(\kappa)\big)\leq C|k|\kappa,
		\end{eqnarray}		
		where $C>0$ is a constant only depending on $n,b_1,M_{\omega,0},L_0$.	
	\end{lem}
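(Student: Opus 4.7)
The plan is first to observe that, because $\lambda_i=b_1i+b_0$ is affine in $i$, the quantity $-\lambda_i+\lambda_j=b_1(j-i)$ depends only on $\ell:=j-i$. Hence the double union defining $\bar{\mathfrak{R}}_k^m(\kappa)$ collapses to a single union
$$\bar{\mathfrak{R}}_k^m(\kappa)=\bigcup_{\ell\in\mathbb{Z}}E_\ell,\qquad E_\ell:=\bigl\{\rho\in\mathcal{D}_m:\ |\langle k,\omega_m(\rho)\rangle+b_1\ell|<\kappa(1+|\ell|)\bigr\}.$$
The argument then has two parts: (a) show that $E_\ell$ is empty unless $|\ell|$ is of order $|k|$; and (b) estimate the measure of each nonempty $E_\ell$ by a hyperplane slab argument.

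For (a), if $\rho\in E_\ell$, then the triangle inequality together with the uniform bound $|\omega_m(\rho)|\leq M_{\omega,m}\leq2M_{\omega,0}$ coming from \eqref{20246111949} yields
$$b_1|\ell|\leq 2nM_{\omega,0}|k|+\kappa(1+|\ell|).$$
Using the hypothesis $\kappa<b_1/3$, one has $b_1-\kappa>2b_1/3>0$, whence $|\ell|\leq C_1|k|$ with $C_1$ depending only on $n,b_1,M_{\omega,0}$. In particular only $O(|k|)$ values of $\ell$ contribute, and this is precisely where the assumption $\kappa<b_1/3$ is used.

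For (b), I would make the change of variables $v=\omega_m(\rho)$. By \eqref{20240714-1} this map is bi-Lipschitz with $|\partial_v\rho|\leq L_m\leq 2L_0$, producing a Jacobian factor at most $(2L_0)^n$. In the $v$-coordinates, $E_\ell$ is contained in a slab $\{v:|\langle k,v\rangle+b_1\ell|<\kappa(1+|\ell|)\}$ intersected with the bounded set $\omega_m(\mathcal{D}_m)$ (whose diameter is controlled by $M_{\omega,0}$). Choosing an orthonormal basis with first vector $k/|k|$ and integrating by Fubini over the transverse directions, the slab has width $2\kappa(1+|\ell|)/|k|$, which gives $\mathrm{meas}(E_\ell)\leq C_2\kappa(1+|\ell|)/|k|$, with $C_2$ depending on $n,L_0,M_{\omega,0}$.

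Combining (a) and (b) and summing,
$$\mathrm{meas}\bigl(\bar{\mathfrak{R}}_k^m(\kappa)\bigr)\leq\sum_{|\ell|\leq C_1|k|}\frac{C_2\kappa(1+|\ell|)}{|k|}\leq C|k|\kappa,$$
which is \eqref{20247132036}. The only real obstacle is (a): the weight $(1+|\ell|)$ on the right-hand side of the non-resonance condition means the naive per-slab estimate grows with $\ell$, so without the restriction $|\ell|\lesssim|k|$ forced by $\kappa<b_1/3$ the sum would diverge. Given that restriction, the bound $|k|\kappa$ (rather than $\kappa$) arises from the extra factor $|\ell|\lesssim|k|$ in the summand, and all implicit constants depend only on $n,b_1,M_{\omega,0},L_0$ as claimed.
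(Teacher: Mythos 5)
Your proof is correct and follows essentially the same strategy as the paper's: restrict the contributing differences $\ell=j-i$ to $O(|k|)$ many via the hypothesis $\kappa<b_1/3$, estimate each resonance zone as a slab of measure $O(\kappa(1+|\ell|)/|k|)$ after passing to frequency coordinates, and sum. The only cosmetic differences are that you make the collapse of the double union to a single union over $\ell$ explicit (the paper does this implicitly when it "sums with respect to $|i-j|$"), and that you estimate the slab by an orthogonal decomposition and Fubini, where the paper uses the transversality $\partial_{\vec{v}}(\langle k,\omega\rangle-\lambda_i+\lambda_j)=|k|$ for a suitable $\vec v\in\{-1,1\}^n$; these produce the same bound up to constants depending only on $n,b_1,M_{\omega,0},L_0$.
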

	\begin{proof}
	We introduce the perturbed frequencies $\omega=\omega_{m}(\rho)$ as parameters over the domain $\Delta:=\omega_{m}(\mathcal{D}_{m})$ and consider the resonance zones $\dot{\bar{\mathfrak{R}}}_{k,i,j}^{m}(\kappa)=\omega_{m}({\bar{\mathfrak{R}}}_{k,i,j}^{m}(\kappa))$. From the iterative lemma, we know
	\begin{eqnarray}\label{20246151439}
		\sup_{\omega\in\Delta}|\omega|\leq M_{\omega,m}\leq 2M_{\omega,0},~~~~~~\sup_{\omega\in\Delta}|\partial_{\omega}\rho|\leq L_m\leq 2L_0.
	\end{eqnarray}

Now we consider a fixed $\dot{\bar{\mathfrak{R}}}_{k,i,j}^{m}(\kappa)$.
If $0<|k|\leq \frac{b_1}{6M_{\omega,0}}|i-j|$, then
$$|\langle k,\omega\rangle|\leq2M_{\omega,0}|k|\leq\frac{b_1}{3}|i-j|$$
and thus
$$|\langle k,\omega\rangle-\lambda_i+\lambda_j|\geq\frac{2b_1}{3}|i-j|>\kappa(1+|i-j|),$$
which implies $\dot{\bar{\mathfrak{R}}}_{k,i,j}^{m}(\kappa)=\emptyset$;
otherwise, $|k|> \frac{b_1}{6M_{\omega,0}}|i-j|$, fixing $\vec{v}\in\{-1,1\}^n$ such that $k\cdot\vec{v}=|k|$, then it is clear that
	\begin{eqnarray*}
		\partial_{\vec{v}}\big(\langle k,\omega\rangle-\lambda_i+\lambda_j\big)=|k|,
	\end{eqnarray*}
	which leads to
	\begin{eqnarray}\label{20240716-3}
		\begin{aligned}
			\text{meas}\big(\dot{\bar{\mathfrak{R}}}_{k,i,j}^{m}(\kappa)\big)&\leq (\text{diag}\Delta)^{n-1}\frac{\kappa(1+|i-j|)}{|k|}\\
&<(4M_{\omega,0})^{n-1}\Big(1+\frac{6M_{\omega,0}}{b_1}\Big)\kappa.
		\end{aligned}
	\end{eqnarray}

Summing \eqref{20240716-3} with respect to $|i-j|<\frac{6M_{\omega,0}}{b_1}|k|$, one has
	\begin{eqnarray}		\text{meas}\big(\dot{\bar{\mathfrak{R}}}_k^m(\kappa)\big)<(4M_{\omega,0})^{n-1}\Big(1+\frac{6M_{\omega,0}}{b_1}\Big)\Big(1+\frac{12M_{\omega,0}}{b_1}|k|\Big)\kappa.
	\end{eqnarray}
	Going back to the original parameter domain $\mathcal{D}_{m}$ by the inverse frequency map $\omega_{m}^{-1}$, we finally get
\begin{eqnarray}		
\text{meas}\big(\bar{\mathfrak{R}}_k^m(\kappa)\big)\leq L_m^n\text{meas}\big(\dot{\bar{\mathfrak{R}}}_k^m(\kappa)\big)<(2L_0)^n(4M_{\omega,0})^{n-1}\Big(1+\frac{6M_{\omega,0}}{b_1}\Big)\Big(1+\frac{12M_{\omega,0}}{b_1}|k|\Big)\kappa,
	\end{eqnarray}
which is \eqref{20247132036} in the lemma.
	\end{proof}

Next, we give the estimate of $\Theta$ in \eqref{20240716-1}.
\begin{lem}\label{20240726-7}
We have
		\begin{eqnarray}\label{20240717-10}
			\text{meas}(\Theta)<\varepsilon_0^{\frac{1}{\alpha}}.
		\end{eqnarray}
	\end{lem}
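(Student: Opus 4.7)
The plan is to bound $\Theta$ by reducing to the unperturbed resonance sets controlled by Lemma~\ref{20246152222}, exploiting the closeness of the perturbed normal frequencies $\Omega_{m,i,\iota}$ to the eigenvalues $\lambda_i=b_1i+b_0$. First, for each quadruple $(k,i,\iota_1,j,\iota_2)$ with $|k|\leq N_m$, I would use $\|\partial_{\rho}\tilde{A}_i^i\|\leq M_{\Omega,m}/i^{\beta_m}$, the bound $|\partial_{\omega}\rho|\leq L_m$ from \eqref{20240714-1} and the structural smallness $M_\Omega<1/(12L)$ from \eqref{20240721-3} to show that the derivative of $\langle k,\omega_m\rangle-\Omega_{m,i,\iota_1}+\Omega_{m,j,\iota_2}$ with respect to $\omega$, in a unit direction $\vec v$ with $k\cdot\vec v=|k|$, has magnitude at least $2|k|/3$. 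Transferring back to $\rho$ by the Whitney-smooth inverse of $\omega_m$ yields the per-tuple measure bound $C(1+|i-j|)\kappa_m/|k|$, while the same estimates (now invoking $M_\Omega<b_1/8$) show that $\mathfrak{R}_{k,i,\iota_1,j,\iota_2}^{m,11}$ is empty once $|i-j|>C_0|k|$, effectively restricting the index range to a window of width $O(|k|)$.

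Next I would split the union over $(i,\iota_1,j,\iota_2)$ at the threshold $J_m:=(4M_\Omega/\kappa_m)^{1/\beta_m}$, chosen so that $2M_\Omega(i\wedge j)^{-\beta_m}\leq\kappa_m$ once $i\wedge j\geq J_m$. For this large part, the triangle inequality gives the containment $\bigcup_{\iota_1,\iota_2}\mathfrak{R}_{k,i,\iota_1,j,\iota_2}^{m,11}\subset\bar{\mathfrak{R}}_{k,i,j}^m(2\kappa_m)$; the multiplicities $d_id_j$ disappear and Lemma~\ref{20246152222} directly controls the contribution by $CN_m^{n+1}\kappa_m$ after summation in $|k|\leq N_m$. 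For the small part $i\wedge j<J_m$, enumeration of the tuples using $d_j\leq c^*j^{d^*}$ produces at most $CJ_m^{d^*+1}(J_m+|k|)^{d^*}|k|$ terms, each of measure $\leq C\kappa_m$, so the small-part contribution is of order $CN_m^{n+1}J_m^{2d^*+1}\kappa_m$ at level $m$.

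The final step is the summation over $m$. The defining relation $\kappa_m^{-(2+d^*/\beta_m)}=(\varepsilon_m/\mu_m^2)^{-1/8}$ together with $J_m\sim\kappa_m^{-1/\beta_m}$, the polynomial control $N_m^{n+d^*+2}\leq(\varepsilon_m/\mu_m^2)^{-1/\alpha}$ from \eqref{20246132026}, and the super-exponential decay \eqref{20240715-2} of $\varepsilon_m/\mu_m^2$ produce a rapidly convergent series whose total is bounded by $\varepsilon_0^{1/\alpha}$ for $\varepsilon_0$ small, with $\alpha$ as in \eqref{20240721-5}. The three remaining families $\mathfrak{R}_k^{m,0}$, $\mathfrak{R}_k^{m,1}$, $\mathfrak{R}_k^{m,20}$ are handled by the same template; the additional factor $j$ or $i+j$ on the right-hand sides of their Diophantine inequalities makes them strictly easier. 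The main obstacle is the precise calibration between the combinatorial factor $J_m^{2d^*+1}$ arising from $d_id_j\lesssim(ij)^{d^*}$ and the power of $\kappa_m$: the specific form of $\alpha$ in \eqref{20240721-5} is engineered exactly so that this balance works out, and carefully verifying it is the main technical step of the proof.
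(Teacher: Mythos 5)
Your overall template matches the paper's — pass to $\omega$-coordinates, show the relevant linear form has directional derivative $\gtrsim|k|$, show emptiness once $|i-j|$ exceeds a constant times $|k|$, split the index set at a threshold in $i\wedge j$, use Lemma~\ref{20246152222} above the threshold and crude counting below — but the key quantitative choice is wrong, and it is exactly the step you flag as ``the main technical step.''

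You take the threshold $J_m=(4M_\Omega/\kappa_m)^{1/\beta_m}$, chosen so that $2M_\Omega(i\wedge j)^{-\beta_m}\leq\kappa_m$. That makes the large-part inclusion land in $\bar{\mathfrak{R}}_{k,i,j}^m(2\kappa_m)$ and contributes $O(N_m^{n+1}\kappa_m)$, which is fine. But then the small-part bound becomes
$$\text{(small part at level $m$)}\lesssim N_m^{n+1}J_m^{2d^*+1}\kappa_m\sim N_m^{n+1}\kappa_m^{\,1-\frac{2d^*+1}{\beta_m}}.$$
Since $\beta_m\leq\beta_0\leq\frac12$ and $2d^*+1\geq3$, the exponent $1-\frac{2d^*+1}{\beta_m}$ is very negative, so this is $\kappa_m$ raised to a large negative power, i.e.\ it blows up as $m\to\infty$ rather than summing. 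No amount of smallness of $\varepsilon_0$ rescues this: the issue is a wrong power law in $\kappa_m$, not a prefactor. The cure is to threshold at $J_m=\bigl(4M_{\Omega,0}/\kappa_m^{2\alpha^*}\bigr)^{1/\beta_m}$ with $\alpha^*=\frac{\beta_0}{2\beta_0+8d^*+4}$, chosen precisely so that (using $\beta_m>\beta_0/2$) both the large-part contribution $\sim\kappa_m^{2\alpha^*}$ and the small-part contribution $\sim J_m^{2d^*+1}\kappa_m\sim\kappa_m^{2\alpha^*}$ carry the same (positive) power of $\kappa_m$. The number $\alpha$ in \eqref{20240721-5} is the output of this balancing, not an input that fixes things automatically; without introducing the intermediate exponent $2\alpha^*$ in the threshold, the two-part split does not close.

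A secondary, minor point: your claimed directional-derivative lower bound $\geq 2|k|/3$ should be $\geq|k|/3$ (from $|k|-\tfrac23$ with $|k|\geq1$), and the large-part inclusion requires the non-resonance condition at level $\kappa_m\leq\kappa_m^{2\alpha^*}$, which holds since $2\alpha^*<1$; neither affects the structure, but both need to be stated correctly once the threshold is fixed.
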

	\begin{proof}
		If $k=0$, then we know $i\neq j$. Using (\ref{20246111950}), the facts $M_{\Omega,0}<\frac{b_1}{8}$ and $\kappa_{m}\leq\kappa_0<\frac{b_1}{4}$, we obtain
		\begin{eqnarray}\label{20246161509}
			\begin{aligned}
				&|\Omega_{m,j,\iota_2}(\rho)-\Omega_{m,i,\iota_1}(\rho)|\\&\geq b_1|i-j|-\frac{M_{\Omega,m}}{j^{\beta_m}}-\frac{M_{\Omega,m}}{i^{\beta_m}}\\&\geq b_1|i-j|-4M_{\Omega,0}\\&>\kappa_{m}(1+|i-j|),
			\end{aligned}
		\end{eqnarray}
which implies
\begin{eqnarray}\label{20240717-1}
 \mathfrak{R}_0^{m,11}=\emptyset.
\end{eqnarray}

It remains to consider the case $0<|k|\leq N_m$. Similarly, we introduce the perturbed frequencies $\omega=\omega_{m}(\rho)$ as parameters over the domain $\Delta:=\omega_{m}(\mathcal{D}_{m})$ and consider the resonance zones $\dot{\mathfrak{R}}_{k,i,j,\iota_1,\iota_2}^{m,11}(\kappa_m)$. The eigenvalues $\Omega_{m,j,\iota}$ can be seen as functions of $\omega$. Since $(Q_m)_j^j$ is Hermitian, by the variation principle of eigenvalue of matrix,
		\begin{eqnarray}\label{202461615099}
			|\partial_{\vec{v}}\Omega_{m,j,\iota}(\omega)|\leq M_{\Omega,m}L_m\leq 4M_{\Omega,0}\cdot L_0< \frac{1}{3},
		\end{eqnarray}
		where $\vec{v}\in\{-1,1\}^n$ such that $k\cdot\vec{v}=|k|$.

For a fixed $\dot{{\mathfrak{R}}}_{k,i,j,\iota_1,\iota_2}^{m,11}$, if $0<|k|\leq \frac{b_1}{6M_{\omega,0}}|i-j|$, then by (\ref{20246151439}), (\ref{20246161509}), the facts $M_{\Omega,0}<\frac{b_1}{8}$ and $\kappa_{m}\leq \kappa_0<\frac{b_1}{12}$, we know
		\begin{eqnarray}
			\begin{aligned}
				&|\langle k,\omega\rangle-\Omega_{m,i,\iota_1}(\rho)+\Omega_{m,j,\iota_2}(\rho)|\\
&\geq|\Omega_{m,j,\iota_2}(\rho)-\Omega_{m,i,\iota_1}(\rho)|-|\langle k,\omega\rangle|\\
&\geq b_1|i-j|-4M_{\Omega,0}-2M_{\omega,0}|k|\\
&>\kappa_{m}(1+|i-j|),
			\end{aligned}
		\end{eqnarray}
which implies $\dot{\mathfrak{R}}_{k,i,j,\iota_1,\iota_2}^{m,11}=\emptyset$; otherwise, $|k|>\frac{b_1}{6M_{\omega,0}}|i-j|$, then by (\ref{202461615099}), we have
		\begin{eqnarray*}
			\partial_{\vec{v}}(\langle k,\omega\rangle-\Omega_{m,i,\iota_1}(\rho)+\Omega_{m,j,\iota_2}(\rho))> |k|-\frac{2}{3}\geq \frac{|k|}{3},
		\end{eqnarray*}
		which leads to
\begin{eqnarray}\label{20240716-4} \text{meas}(\dot{\mathfrak{R}}_{k,i,j,\iota_1,\iota_2}^{m,11})\leq \frac{3\kappa_{m}(1+|i-j|)}{|k|}<\frac{36M_{\omega,0}}{b_1}\kappa_{m}.
\end{eqnarray}
Going back to the original parameter domain $\mathcal{D}_{m}$, we get
		\begin{eqnarray}\label{2024616150911}
\text{meas}(\mathfrak{R}_{k,i,j,\iota_1,\iota_2}^{m,11})<(2L_0)^n\frac{36M_{\omega,0}}{b_1}\kappa_{m}.
		\end{eqnarray}
In the following, we estimate the measure of
\begin{eqnarray}\label{20240717-2}
\mathfrak{R}_{k}^{m,11}=\bigcup_{|i-j|<\frac{6M_{\omega,0}}{b_1}|k|}\mathfrak{R}_{k,i,j,\iota_1,\iota_2}^{m,11}
:=\Big(\bigcup_{\mathfrak{B}_1}\mathfrak{R}_{k,i,j,\iota_1,\iota_2}^{m,11}\Big)\cup\Big(\bigcup_{\mathfrak{B}_2}\mathfrak{R}_{k,i,j,\iota_1,\iota_2}^{m,11}\Big)
\end{eqnarray}
with the index sets
\begin{eqnarray*}
\begin{aligned} \mathfrak{B}_1&=\Big\{((i,\iota_1),(j,\iota_2))\in\mathfrak{L}\times\mathfrak{L}~:~|i-j|<\frac{6M_{\omega,0}}{b_1}|k|,~\min\{i,j\}{\geq}J_m\Big\},\\
\mathfrak{B}_2&=\Big\{((i,\iota_1),(j,\iota_2))\in\mathfrak{L}\times\mathfrak{L}~:~|i-j|<\frac{6M_{\omega,0}}{b_1}|k|,~\min\{i,j\}<J_m\Big\},
\end{aligned}
\end{eqnarray*}
where $J_m=(\frac{4M_{\Omega,0}}{\kappa_{m}^{2\alpha^*}})^{\frac{1}{\beta_m}}$ and $\alpha^*=\frac{\beta_0}{2\beta_0+8d^*+4}$. By the fact $\beta_m>\frac{\beta_0}{2}$, we have
		\begin{eqnarray}\label{20240721-9}	\kappa_{m}^{2\alpha^*}<\big(\frac{\varepsilon_m}{\mu_m^2}\big)^{\frac{2}{\alpha}}.
		\end{eqnarray}
Then by \eqref{20240721-9} and the fact $\beta_m<1$, we have
		\begin{eqnarray}\label{20240721-10}	J_m>4M_{\Omega,0}\big(\frac{\varepsilon_m}{\mu_m^2}\big)^{-\frac{2}{\alpha}}>\frac{6M_{\omega,0}}{b_1}N_m,
		\end{eqnarray}
where the last inequality follows from \eqref{20246132026} and $\gamma_0$ small enough.

Firstly, consider the case $((i,\iota_1),(j,\iota_2))\in\mathfrak{B}_1$. If
\begin{eqnarray*}
|\langle k,\omega(\rho)\rangle-\lambda_i+\lambda_j|\geq 2\kappa_{m}^{2\alpha^*}(1+|i-j|),
\end{eqnarray*}
then we have
		\begin{eqnarray}
			\begin{aligned}
				&|\langle k,\omega(\rho)\rangle-\Omega_{m,i,\iota_1}(\rho)+\Omega_{m,j,\iota_2}(\rho)|\\
&\geq|\langle k,\omega(\rho)\rangle-\lambda_i+\lambda_j|-\frac{2M_{\Omega,0}}{i^{\beta_m}}-\frac{2M_{\Omega,0}}{j^{\beta_m}}\\
&\geq2\kappa_{m}^{2\alpha^*}(1+|i-j|)-\kappa_{m}^{2\alpha^*}\\
&\geq \kappa_{m}^{2\alpha^*}(1+|i-j|)\\
&\geq \kappa_{m}(1+|i-j|),
			\end{aligned}
		\end{eqnarray}
which means
\begin{eqnarray}\label{20240717-3}
\mathfrak{R}_{k,i,j,\iota_1,\iota_2}^{m,11}\subset\bar{\mathfrak{R}}_{k,i,j}^{m}(2\kappa_{m}^{2\alpha^*})\subset\bar{\mathfrak{R}}_{k}^{m}(2\kappa_{m}^{2\alpha^*}).
\end{eqnarray}
Thus, by \eqref{20240717-3} and Lemma \ref{20246152222}, we get
\begin{eqnarray}\label{20240717-7}
\text{meas}\Big(\bigcup_{\mathfrak{B}_1}\mathfrak{R}_{k,i,j,\iota_1,\iota_2}^{m,11}\Big)
\leq\text{meas}\Big(\bar{\mathfrak{R}}_{k}^{m}(2\kappa_{m}^{2\alpha^*})\Big)
\leq C|k|\kappa_{m}^{2\alpha^*},
\end{eqnarray}
where $C>0$ is a constant only depending on $n,b_1,M_{\omega,0},L_0$.

Next, consider the case $((i,\iota_1),(j,\iota_2))\in\mathfrak{B}_2$.
We will count the number of the set $\mathfrak{B}_2$, denoted by $\#\mathfrak{B}_2$. By \eqref{20240721-10}, we know
\begin{eqnarray}\label{20240717-4}
|i-j|<\frac{6M_{\omega,0}}{b_1}|k|\leq\frac{6M_{\omega,0}}{b_1}N_m<J_m,
\end{eqnarray}
Assuming $i\leq j$, then by \eqref{20240717-4} and $\text{min}\{i,j\}< J_m$, we know $i<J_m$ and $j<2J_m$. Thus,
\begin{eqnarray}\label{20240717-5}
\begin{aligned}
\#\mathfrak{B}_2&\leq2\sum_{i<J_m,\atop i\leq j<\frac{6M_{\omega,0}}{b_1}|k|+i}c^*i^{d^*}c^*j^{d^*}\\				
&\leq2(c^*)^2J_m^{d^*}(2J_m)^{d^*}\frac{6M_{\omega,0}}{b_1}|k|J_m\\
&=2^{d^*+1}(c^*)^2\frac{6M_{\omega,0}}{b_1}|k|J_m^{2d^*+1}.
\end{aligned}
\end{eqnarray}
By \eqref{2024616150911}, \eqref{20240717-5} and the fact $J_m\leq(\frac{4M_{\Omega,0}}{\kappa_{m}^{2\alpha^*}})^{\frac{2}{\beta_0}}$, we get
\begin{eqnarray}\label{20240717-6}
\text{meas}\Big(\bigcup_{\mathfrak{B}_2}\mathfrak{R}_{k,i,j,\iota_1,\iota_2}^{m,11}\Big)
\leq C|k|\kappa_{m}^{2\alpha^*},
\end{eqnarray}
where $C>0$ is a constant only depending on $n,b_1,c^*,d^*,M_{\omega,0},L_0,M_{\Omega,0},\beta_0$.

Summing \eqref{20240717-7} and \eqref{20240717-6}, we get
\begin{eqnarray}\label{20240717-8}
\text{meas}\Big(\mathfrak{R}_{k}^{m,11}\Big)
\leq C|k|\kappa_{m}^{2\alpha^*},
\end{eqnarray}
and further taking sum with respect to $k=0$ in \eqref{20240717-1} and $0<|k|\leq N_m$ in \eqref{20240717-8},
\begin{eqnarray}\label{20240717-9}
\begin{aligned}
\text{meas}\Big(\bigcup_{|k|\leq N_m}\mathfrak{R}_k^{m,11}\Big)
&\leq CN_m^{n+1}\kappa_{m}^{2\alpha^*},\\
&\leq C\big(\frac{\varepsilon_m}{\mu_m^2}\big)^{\frac{1}{\alpha}\big(1+\frac{d^*+1}{n+d^*+2}\big)},
\end{aligned}
\end{eqnarray}
where $C>0$ is a constant only depending on $n,b_1,c^*,d^*,M_{\omega,0},L_0,M_{\Omega,0},\beta_0$, and the last inequality follows from \eqref{20246132026} and \eqref{20240721-9}.
Finally, taking sum in \eqref{20240717-9} with respect to $m\geq0$, this lemma is proved.
	\end{proof}

	\section{Appendix}
	\begin{lem}\label{20243122043}
	Assuming $j_1,j_2,j_3\in \mathbb{N}^+$ with $j_1\leq j_2\leq j_3$, then we have
	\begin{eqnarray}\label{20243121618}
		 \max\{w(j_1,j_2),w(j_2,j_3)\}\leq w(j_1,j_3)\leq w(j_1,j_2)w(j_2,j_3).
	\end{eqnarray}
\end{lem}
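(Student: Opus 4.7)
The plan is to unpack the definition $w(i,j) = 1 + |i-j|/\sqrt{i\wedge j}$ and, using the ordering $j_1 \leq j_2 \leq j_3$, rewrite the three quantities as
\begin{equation*}
w(j_1,j_2) = 1 + \frac{j_2-j_1}{\sqrt{j_1}}, \quad w(j_2,j_3) = 1 + \frac{j_3-j_2}{\sqrt{j_2}}, \quad w(j_1,j_3) = 1 + \frac{j_3-j_1}{\sqrt{j_1}}.
\end{equation*}

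The left inequality is then essentially monotonicity. For $w(j_1,j_3) \geq w(j_1,j_2)$ I would simply note $j_3 - j_1 \geq j_2 - j_1$; for $w(j_1,j_3) \geq w(j_2,j_3)$ I would use $\sqrt{j_1} \leq \sqrt{j_2}$ together with $j_3 - j_1 \geq j_3 - j_2$, so that $(j_3-j_1)/\sqrt{j_1} \geq (j_3-j_2)/\sqrt{j_2}$. These are one-line estimates.

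The right inequality requires a bit more care. Expanding the product $w(j_1,j_2)w(j_2,j_3)$ and subtracting $w(j_1,j_3)$ from it, the problem reduces (after canceling the common term $(j_2-j_1)/\sqrt{j_1}$ and rearranging) to showing
\begin{equation*}
\frac{j_3-j_2}{\sqrt{j_1}} \leq \frac{j_3-j_2}{\sqrt{j_2}} + \frac{(j_2-j_1)(j_3-j_2)}{\sqrt{j_1 j_2}}.
\end{equation*}
If $j_3 = j_2$ the inequality is trivial, so I may divide by $j_3-j_2 > 0$ and multiply by $\sqrt{j_1 j_2}$, reducing the claim to $\sqrt{j_2} - \sqrt{j_1} \leq j_2 - j_1$. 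This is the key reduction, and it is where the factor $\sqrt{\,\cdot\,}$ in the definition of $w$ really matters. Finally, factor as $(\sqrt{j_2}-\sqrt{j_1})(\sqrt{j_2}+\sqrt{j_1}-1) \geq 0$, which holds because $j_1, j_2 \geq 1$ forces $\sqrt{j_1}+\sqrt{j_2} \geq 2 > 1$ and $\sqrt{j_2} \geq \sqrt{j_1}$.

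No step is genuinely hard; the only substantive point is the elementary inequality $\sqrt{j_2}-\sqrt{j_1}\leq j_2-j_1$ for positive integers, which encodes why the submultiplicativity in the right-hand inequality survives the $\sqrt{\,\cdot\,}$ in $w(i,j)$. This is notably a weaker denominator than $i\wedge j$ used in \cite{Gre2,Gre3}, and it is precisely this weight that makes the iterative loss of regularity in $\beta$ unavoidable, as mentioned in the introduction.
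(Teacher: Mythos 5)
Your proof is correct. The paper does not supply its own argument for this lemma; it simply refers to Lemma A.1 in \cite{Gre3}, so you have in effect reconstructed that proof directly from the definition $w(i,j)=1+|i-j|/\sqrt{i\wedge j}$. The left inequality is indeed just monotonicity in each argument, and for the right inequality the reduction to $\sqrt{j_2}-\sqrt{j_1}\leq j_2-j_1$, i.e.\ $1\leq\sqrt{j_1}+\sqrt{j_2}$ when $j_1<j_2$, is exactly the point where the square root in the definition of $w$ matters; your factorization $(\sqrt{j_2}-\sqrt{j_1})(\sqrt{j_2}+\sqrt{j_1}-1)\geq 0$ closes it cleanly, including the degenerate cases $j_2=j_3$ and $j_1=j_2$.

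One small correction to your closing remark: the weight $w(i,j)$ with $\sqrt{i\wedge j}$ in the denominator is the same here as in \cite{Gre3}; what this paper weakens relative to \cite{Gre2,Gre3} is the polynomial prefactor in the matrix norm, namely $(i\wedge j)^{\beta}$ in \eqref{20240511-1} in place of $(ij)^{\beta}$ in \eqref{20240726-1} and \eqref{20247182203}, not the function $w$ itself. That weakening is indeed what forces the loss in $\beta$ across KAM steps, but it is orthogonal to the content of this lemma, which concerns only $w$ and holds verbatim in \cite{Gre3}.
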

\begin{proof}
See Lemma A.1 and its proof in \cite{Gre3}.
\end{proof}

\begin{lem}\label{20243122017}
	Assuming $\beta>0$ and $j\in \mathbb{N}^+$, then we have
\begin{eqnarray}\label{20240527-1}
\sum_{k\in \mathbb{N}^+}\frac{1}{k^{1+\beta}}<1+\frac{1}{\beta},
\end{eqnarray}
	\begin{eqnarray}\label{20240520-1}
		\sum_{k\in \mathbb{N}^+}\frac{1}{k^{\beta}(1+|k-j|)}<2+\frac{2}{\beta}.
	\end{eqnarray}
\end{lem}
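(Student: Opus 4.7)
For the first inequality, the plan is the textbook integral comparison. Since $t \mapsto t^{-(1+\beta)}$ is strictly decreasing on $[1,\infty)$, for each $k \geq 2$ one has $\frac{1}{k^{1+\beta}} < \int_{k-1}^{k} \frac{dt}{t^{1+\beta}}$. Summing over $k \geq 2$ gives $\sum_{k=2}^{\infty} \frac{1}{k^{1+\beta}} < \int_{1}^{\infty} \frac{dt}{t^{1+\beta}} = \frac{1}{\beta}$, and the $k=1$ term adds $1$, yielding $\sum_{k \geq 1} \frac{1}{k^{1+\beta}} < 1 + \frac{1}{\beta}$, with strictness inherited from the strict integral comparison.

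For the second inequality, I plan to decompose the sum into three regions based on the position of $k$ relative to $j$, and in each region reduce to a sum of the form controlled by the first inequality.
\begin{itemize}
\item Region (I), $1 \leq k \leq (j+1)/2$: here $1+j-k \geq k$, so $\frac{1}{k^\beta(1+j-k)} \leq \frac{1}{k^{1+\beta}}$.
\item Region (II), $(j+1)/2 < k \leq j$: the substitution $l = 1+j-k$ maps the range bijectively onto $\{1,\ldots,\lfloor j/2 \rfloor\}$, and $l < k$ gives $\frac{1}{k^\beta \cdot l} \leq \frac{1}{l^{1+\beta}}$.
\item Region (III), $k > j$: the substitution $m = k-j \geq 1$ together with $j+m \geq 1+m$ (valid since $j \geq 1$) gives $\frac{1}{(j+m)^\beta(1+m)} \leq \frac{1}{(1+m)^{1+\beta}}$.
\end{itemize}

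The main obstacle is the combinatorial accounting needed to land on the stated constant $2 + 2/\beta$ rather than a larger multiple of $1/\beta$. Pairing the indices $k \leftrightarrow j+1-k$ across regions (I) and (II) shows that each integer from $1$ to $\lfloor j/2 \rfloor$ appears twice as an index and the middle value $(j+1)/2$ appears once when $j$ is odd, so regions (I) and (II) together are bounded, via the first inequality, by $2 \sum_{n \geq 1} \frac{1}{n^{1+\beta}} < 2 + \frac{2}{\beta}$. Region (III) contributes the re-indexed tail $\sum_{n \geq 2} \frac{1}{n^{1+\beta}} < \frac{1}{\beta}$. The final combining step must exploit that the $l=1$ term in region (II) corresponds uniquely to $k=j$ (so it is not double counted with the absent $m=0$ term in region (III)) and that, after pairing, the tail from region (III) can be absorbed by tightening the sum in regions (I) and (II) when $\lfloor j/2 \rfloor$ is finite. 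This delicate bookkeeping is the step I expect to require the most care.
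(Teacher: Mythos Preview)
Your argument for the first inequality is correct and coincides with the paper's.

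For the second inequality, however, your decomposition cannot reach the stated constant $2+\tfrac{2}{\beta}$. With your pointwise bounds, regions (I) and (II) together contribute at most $2S_M$ where $S_M=\sum_{n=1}^{M}n^{-1-\beta}$ and $M=\lfloor j/2\rfloor$, while region (III) contributes at most $S-1$ with $S=\sum_{n\ge 1}n^{-1-\beta}$. Thus your total upper bound is $2S_M+S-1$. Now take $\beta$ small and $j$ (hence $M$) large: then $S_M\to S$ and, using $\zeta(1+\beta)=\tfrac{1}{\beta}+\gamma+O(\beta)$, one has $2S_M+S-1\to 3S-1\approx \tfrac{3}{\beta}$, which strictly exceeds $2+\tfrac{2}{\beta}$. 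So the ``delicate bookkeeping'' you defer to cannot succeed: the sum of your three regional bounds is genuinely of order $2+\tfrac{3}{\beta}$, not $2+\tfrac{2}{\beta}$, and no rearrangement of the indices changes this because each pointwise inequality you use (e.g.\ $\tfrac{1}{k^\beta l}\le \tfrac{1}{\min(k,l)^{1+\beta}}$) is essentially tight near the middle of the range.

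The paper takes a completely different route: it applies H\"older's inequality with exponents $\tfrac{1+\beta}{\beta}$ and $1+\beta$ to the product $k^{-\beta}\cdot(1+|k-j|)^{-1}$, obtaining
\[
\sum_{k\ge 1}\frac{1}{k^\beta(1+|k-j|)}\le\Big(\sum_{k\ge 1}\frac{1}{k^{1+\beta}}\Big)^{\frac{\beta}{1+\beta}}\Big(\sum_{k\ge 1}\frac{1}{(1+|k-j|)^{1+\beta}}\Big)^{\frac{1}{1+\beta}}.
\]
Since each positive integer occurs at most twice as a value of $1+|k-j|$, the second factor is at most $(2S)^{1/(1+\beta)}$, and the product is bounded by $2^{1/(1+\beta)}S<2S<2+\tfrac{2}{\beta}$. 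You could either adopt this H\"older argument, or keep your decomposition but accept the weaker constant $2+\tfrac{3}{\beta}$ (which would suffice for every application in the paper, though not for the lemma as stated).
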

\begin{proof}
Firstly, \eqref{20240527-1} follows from the fact
\begin{eqnarray*}
\sum_{k\in \mathbb{N}^+}\frac{1}{k^{1+\beta}}<1+\int_1^{+\infty}\frac{1}{x^{1+\beta}}dx=1+\frac{1}{\beta}.
\end{eqnarray*}
Then, by H\"{o}lder inequality and \eqref{20240527-1},
	\begin{eqnarray*}
		\sum_{k\in \mathbb{N}^+}\frac{1}{k^{\beta}(1+|k-j|)}&\leq&\Big(\sum_{k\in \mathbb{N}^+}\frac{1}{k^{1+\beta}}\Big)^{\frac{\beta}{1+\beta}}\Big(\sum_{k\in \mathbb{N}^+}\frac{1}{(1+|k-j|)^{1+\beta}}\Big)^{\frac{1}{1+\beta}}\\
&<&\Big(\sum_{k\in \mathbb{N}^+}\frac{1}{k^{1+\beta}}\Big)^{\frac{\beta}{1+\beta}}\Big(2\sum_{k\in \mathbb{N}^+}\frac{1}{k^{1+\beta}}\Big)^{\frac{1}{1+\beta}}\\
&<&2\sum_{k\in \mathbb{N}^+}\frac{1}{k^{1+\beta}}\\
&<&2+\frac{2}{\beta}.
	\end{eqnarray*}
\end{proof}

\begin{lem}\label{20240527-2}
	Let $B=\big(b_i^j\big)_{i,j\in\mathbb{N}^+}$ be a complex infinite-dimensional matrix. If its Frobenius norm
$$\|B\|_F:=\Big(\sum_{i,j\in\mathbb{N}^+}|b_i^j|^2\Big)^{\frac{1}{2}}<+\infty,$$
then $B$ is a bounded operator on $\ell^2(\mathbb{N}^+)$ with its $\ell^2$-operator norm satisfying
	\begin{eqnarray}\label{20240527-3}
		 \|B\|_{\ell^2\rightarrow\ell^2}\leq\|B\|_F.
	\end{eqnarray}
\end{lem}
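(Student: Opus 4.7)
The plan is to apply Cauchy--Schwarz row by row and then sum. Let $x=(x_j)_{j\in\mathbb{N}^+}\in\ell^2(\mathbb{N}^+)$ be arbitrary. First I would write the $i$-th component of $Bx$ as $(Bx)_i=\sum_{j\in\mathbb{N}^+}b_i^j x_j$ and apply the Cauchy--Schwarz inequality in the index $j$ to obtain
\begin{equation*}
|(Bx)_i|^2\leq\Big(\sum_{j\in\mathbb{N}^+}|b_i^j|^2\Big)\Big(\sum_{j\in\mathbb{N}^+}|x_j|^2\Big)=\Big(\sum_{j\in\mathbb{N}^+}|b_i^j|^2\Big)\|x\|_{\ell^2}^2.
\end{equation*}

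Next I would sum over $i\in\mathbb{N}^+$ and use Tonelli to exchange the order of summation:
\begin{equation*}
\|Bx\|_{\ell^2}^2=\sum_{i\in\mathbb{N}^+}|(Bx)_i|^2\leq\Big(\sum_{i,j\in\mathbb{N}^+}|b_i^j|^2\Big)\|x\|_{\ell^2}^2=\|B\|_F^2\,\|x\|_{\ell^2}^2.
\end{equation*}
Taking square roots and then taking the supremum over all $x$ with $\|x\|_{\ell^2}\leq 1$ yields \eqref{20240527-3}. Since $\|B\|_F<+\infty$ by hypothesis, this also shows that the formal sum defining $Bx$ is absolutely convergent and defines a bounded operator on $\ell^2(\mathbb{N}^+)$.

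There is essentially no obstacle here: the argument is the standard Hilbert--Schmidt bound, and the only mild point worth noting is that the finiteness of $\|B\|_F$ a priori justifies both the absolute convergence of each row sum (ensuring $(Bx)_i$ is well defined) and the interchange of the double sum used in the second display.
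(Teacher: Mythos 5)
Your proof is correct and follows essentially the same route as the paper: apply the Cauchy--Schwarz inequality row by row to bound $|(Bx)_i|^2$, then sum over $i$. The only difference is that you explicitly mention Tonelli for the interchange and note the a priori convergence issue, which the paper leaves implicit.
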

\begin{proof}
For $z=(z_j)_{j\in\mathbb{N}^+}\in\ell^2(\mathbb{N}^+)$, by Schwarz inequality, we get
			\begin{eqnarray*}
				\begin{aligned}					
\|Bz\|_{\ell^2}^2&=\sum_{i\in\mathbb{N}^+}\Big|\sum_{ j\in\mathbb{N}^+}b_{i}^{j}z_{j}\Big|^2\\
&\leq \sum_{i\in\mathbb{N}^+}\Big(\sum_{ j\in\mathbb{N}^+}\big|b_{i}^{j}\big|^2\Big)\Big(\sum_{ j\in\mathbb{N}^+}|z_j|^2\Big)\\
&=\|B\|_F^2\|z\|_{\ell^2}^2.
				\end{aligned}
			\end{eqnarray*}
From this the estimate \eqref{20240527-3} follows.
\end{proof}

\begin{lem}\label{20240527-4}
	Let $B=\big(b_i^j\big)_{i,j\in\mathbb{N}^+}$ be a complex infinite-dimensional matrix. If both $\ell^{\infty}$-operator norm and $\ell^1$-operator norm are bounded, i.e.,
$$\|B\|_{\ell^\infty\rightarrow\ell^\infty}:=\sup_{i\in\mathbb{N}^+}\sum_{j\in\mathbb{N}^+}|b_i^j|<+\infty,$$
$$\|B\|_{\ell^1\rightarrow\ell^1}:=\sup_{j\in\mathbb{N}^+}\sum_{i\in\mathbb{N}^+}|b_i^j|<+\infty,$$
then $B$ is a bounded operator on $\ell^2(\mathbb{N}^+)$ with its $\ell^2$-operator norm satisfying
	\begin{eqnarray}\label{20240527-5}
		 \|B\|_{\ell^2\rightarrow\ell^2}\leq\|B\|_{\ell^\infty\rightarrow\ell^\infty}^{\frac{1}{2}}\|B\|_{\ell^1\rightarrow\ell^1}^{\frac{1}{2}}.
	\end{eqnarray}
\end{lem}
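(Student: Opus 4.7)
The statement to prove is the classical Schur test, so the plan is to combine a Cauchy–Schwarz inequality on the inner sum with Fubini on the outer double sum, which is the most direct route and avoids invoking heavier machinery such as Riesz–Thorin interpolation.

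First, I would fix $z=(z_j)_{j\in\mathbb{N}^+}\in\ell^2(\mathbb{N}^+)$ and write
\begin{eqnarray*}
\|Bz\|_{\ell^2}^2=\sum_{i\in\mathbb{N}^+}\Big|\sum_{j\in\mathbb{N}^+}b_i^j z_j\Big|^2.
\end{eqnarray*}
The trick is to split the factor $|b_i^j|$ inside the inner modulus as $|b_i^j|^{1/2}\cdot|b_i^j|^{1/2}|z_j|$ and apply the Cauchy–Schwarz inequality in the $j$ variable, which gives
\begin{eqnarray*}
\Big|\sum_{j\in\mathbb{N}^+}b_i^j z_j\Big|^2\leq \Big(\sum_{j\in\mathbb{N}^+}|b_i^j|\Big)\Big(\sum_{j\in\mathbb{N}^+}|b_i^j||z_j|^2\Big)\leq \|B\|_{\ell^\infty\rightarrow\ell^\infty}\sum_{j\in\mathbb{N}^+}|b_i^j||z_j|^2,
\end{eqnarray*}
using the definition of the $\ell^\infty$-operator norm for the first factor.

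Next, I would sum in $i$, swap the order of summation by Tonelli (all terms are non-negative), and apply the definition of the $\ell^1$-operator norm to the inner sum over $i$:
\begin{eqnarray*}
\|Bz\|_{\ell^2}^2\leq \|B\|_{\ell^\infty\rightarrow\ell^\infty}\sum_{j\in\mathbb{N}^+}|z_j|^2\sum_{i\in\mathbb{N}^+}|b_i^j|\leq \|B\|_{\ell^\infty\rightarrow\ell^\infty}\|B\|_{\ell^1\rightarrow\ell^1}\|z\|_{\ell^2}^2.
\end{eqnarray*}
Taking square roots gives exactly \eqref{20240527-5}.

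There is no real obstacle here; the only thing to be slightly careful about is the convergence of the rearranged double series, which is justified by Tonelli's theorem since all summands are non-negative, and the non-triviality of the bound, which requires both $\|B\|_{\ell^\infty\to\ell^\infty}$ and $\|B\|_{\ell^1\to\ell^1}$ to be finite—precisely the hypothesis of the lemma. The argument is symmetric in rows and columns in the sense that the same computation with the roles of $i$ and $j$ interchanged would produce the same geometric-mean bound, which is a good sanity check.
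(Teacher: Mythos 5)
Your proof is correct and follows essentially the same route as the paper: Cauchy--Schwarz on the inner sum with the $|b_i^j|^{1/2}\cdot|b_i^j|^{1/2}|z_j|$ split, then swap the order of summation and invoke the $\ell^1$-operator norm bound. The only difference is cosmetic---you name Tonelli explicitly to justify the interchange, which the paper leaves implicit.
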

\begin{proof}
For $z=(z_j)_{j\in\mathbb{N}^+}\in\ell^2(\mathbb{N}^+)$, by Schwarz inequality, we get
			\begin{eqnarray*}
				\begin{aligned}					
\|Bz\|_{\ell^2}^2&=\sum_{i\in\mathbb{N}^+}\Big|\sum_{ j\in\mathbb{N}^+}b_{i}^{j}z_{j}\Big|^2\\
&\leq \sum_{i\in\mathbb{N}^+}\Big(\sum_{ j\in\mathbb{N}^+}\big|b_{i}^{j}\big|\Big)\Big(\sum_{ j\in\mathbb{N}^+}\big|b_{i}^{j}\big||z_j|^2\Big)\\
&\leq\Big(\sup_{i\in\mathbb{N}^+}\sum_{ j\in\mathbb{N}^+}\big|b_{i}^{j}\big|\Big)\Big(\sum_{i, j\in\mathbb{N}^+}\big|b_{i}^{j}\big||z_j|^2\Big)\\
&=\Big(\sup_{i\in\mathbb{N}^+}\sum_{ j\in\mathbb{N}^+}\big|b_{i}^{j}\big|\Big)\sum_{j\in\mathbb{N}^+}\Big(\sum_{i\in\mathbb{N}^+}\big|b_{i}^{j}\big|\Big)|z_j|^2\\
&\leq\Big(\sup_{i\in\mathbb{N}^+}\sum_{ j\in\mathbb{N}^+}\big|b_{i}^{j}\big|\Big)\Big(\sup_{j\in\mathbb{N}^+}\sum_{i\in\mathbb{N}^+}\big|b_{i}^{j}\big|\Big)\sum_{j\in\mathbb{N}^+}|z_j|^2.
				\end{aligned}
			\end{eqnarray*}
From this the estimate \eqref{20240527-5} follows.
\end{proof}

\begin{lem}\label{20246251433}
Assume $A\in\mathcal{M}_{0,0}$. For any $k\in\mathbb{Z}^n$ with $|k|\leq N$, define $B(k)$ by
	\begin{eqnarray*}		B_{i,\iota_1}^{j,\iota_2}(k)=\frac{A_{i,\iota_1}^{j,\iota_2}}{\langle k,\omega\rangle-\Omega_{i,\iota_1}+\Omega_{j,\iota_2}},~~~\text{for}~~~|k|+|i-j|\neq0,
	\end{eqnarray*}
and $B_{j,\iota_1}^{j,\iota_2}(0)=0$, where $\omega\in\mathbb{R}^n$, $\{\Omega_{j,\iota}\}_{(j,\iota)\in \mathfrak{L}}$ is a sequence of real numbers satisfying
	\begin{eqnarray*}
		|\Omega_{j,\iota}-b_1j-b_0|\leq \frac{M_{\Omega}}{j^{\beta}},~~~\text{for}~~~(j,\iota)\in \mathfrak{L}
	\end{eqnarray*}
	with $b_1,b_0$ in \eqref{20240709-5} and  $M_{\Omega}, \beta$ being given positive real numbers. Moreover, assume
	\begin{eqnarray*}
		|\langle k,\omega\rangle-\Omega_{i,\iota_1}+\Omega_{j,\iota_2}|\geq \kappa(1+|i-j|)
	\end{eqnarray*}
for $(i,\iota_1),(j,\iota_2)\in\mathfrak{L}$ with $|k|+|i-j|\neq0$.	
Then $B(k)\in\mathcal{M}_{0,0}$, and there exists a constant $C>0$ depending on $n,b_0,b_1,c^*,d^*,|\omega|,M_{\Omega},\beta$ such that
	\begin{eqnarray} \|B_i^j(k)\|\leq\frac{CN^{\frac{d^*}{2}}}{\kappa^{\frac{d^*}{2\beta}+1}(1+|i-j|)}\|A_i^j\|.
	\end{eqnarray}
\end{lem}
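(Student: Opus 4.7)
The plan is to view the block $B_i^j(k)$ as the image of $A_i^j$ under the inverse of a linear operator on $d_i \times d_j$ matrices, and to split the analysis into two regimes depending on whether $\min(i,j)$ exceeds a clustering threshold determined by $\kappa$.

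First I would write $\Omega_{i,\iota} = (b_1 i + b_0) + \Delta_{i,\iota}$ with $|\Delta_{i,\iota}| \leq M_\Omega/i^\beta$ and set $\delta := \langle k,\omega\rangle - b_1(i-j)$, so that $B_i^j(k)$ is the unique solution of the Sylvester-type equation $\delta X - D_i X + X D_j = A_i^j$ with $D_i = \mathrm{diag}(\Delta_{i,\iota_1})$ and $D_j = \mathrm{diag}(\Delta_{j,\iota_2})$. Equivalently, $L X = A_i^j$ for the linear operator $L$ on $d_i \times d_j$ matrices that is diagonal in the standard basis with eigenvalues $\langle k,\omega\rangle - \Omega_{i,\iota_1} + \Omega_{j,\iota_2}$, each of modulus at least $\kappa(1+|i-j|)$ by hypothesis.

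Next I would fix a threshold $J := (C_* M_\Omega/\kappa)^{1/\beta}$ with $C_*$ a universal constant chosen sufficiently large. When $\min(i,j) \geq J$, the triangle inequality applied to the non-resonance condition yields $|\delta| \geq \tfrac{3}{4}\kappa(1+|i-j|)$, while $\|D_i\| + \|D_j\| \leq 2M_\Omega/\min(i,j)^\beta \leq 2\kappa/C_*$ is much smaller. Expanding $L^{-1}$ as a Neumann series in the operator norm on matrices (using that left and right multiplication by $D_i, D_j$ each have operator norm $\|D_i\|, \|D_j\|$) will then give $\|B_i^j(k)\| \lesssim \|A_i^j\|/(\kappa(1+|i-j|))$, with a constant depending only on $C_*$.

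When $\min(i,j) < J$, I would pass through the Frobenius norm via $\|B_i^j(k)\| \leq \|B_i^j(k)\|_F$ and divide entrywise using the non-resonance hypothesis to obtain $\|B_i^j(k)\|_F \leq \|A_i^j\|_F/(\kappa(1+|i-j|))$. The standard inequality $\|A_i^j\|_F \leq \sqrt{\min(d_i,d_j)}\,\|A_i^j\|$ combined with $\min(d_i,d_j) \leq c^*\min(i,j)^{d^*} \leq c^* J^{d^*}$ produces an extra factor $\sqrt{c^*}\,J^{d^*/2} \lesssim \kappa^{-d^*/(2\beta)}$ in the numerator, yielding the stated bound (in fact slightly stronger, as no $N^{d^*/2}$ factor is required). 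The main delicate point I anticipate is tuning $C_*$ so that the Neumann series in the first regime converges with uniform geometric ratio across all admissible $(i,j,\iota_1,\iota_2)$; the borderline case $|i-j|=0$ is the tightest, since there the lower bound on $|\delta|$ comes closest to saturating the small-divisor hypothesis.
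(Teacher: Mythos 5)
Your argument is correct. Since the paper offers no proof of its own and instead cites Lemma~4.3 of Gr\'ebert--Paturel~\cite{Gre3}, there is no in-text proof to compare against line by line, but your two-regime strategy is the natural one and all the steps check out: the decomposition $\Omega_{i,\iota}=b_1 i+b_0+\Delta_{i,\iota}$ with $|\Delta_{i,\iota}|\le M_\Omega i^{-\beta}$; the Sylvester-operator formulation $LX=\delta X-D_iX+XD_j$ (which correctly sidesteps the trap that entrywise division by a matrix of small modulus does \emph{not} control the $\ell^2$ operator norm -- one must expand via left/right multiplication, whose operator norms are $\|D_i\|$, $\|D_j\|$); the threshold $J=(C_*M_\Omega/\kappa)^{1/\beta}$, above which $|\delta|\ge\tfrac34\kappa(1+|i-j|)$ (taking $C_*\ge 8$, with the tightest case $|i-j|=0$) so that $\|M\|\le 8/(3C_*)$ uniformly and the Neumann series yields $\|B_i^j\|\lesssim\|A_i^j\|/(\kappa(1+|i-j|))$; and, below the threshold, the Frobenius bound $\|B_i^j\|\le\|B_i^j\|_F\le\|A_i^j\|_F/(\kappa(1+|i-j|))\le\sqrt{\min(d_i,d_j)}\,\|A_i^j\|/(\kappa(1+|i-j|))$ with $\min(d_i,d_j)\le c^*\min(i,j)^{d^*}\le c^*J^{d^*}$, giving the $\kappa^{-d^*/(2\beta)}$ amplification. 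Your resulting constant depends only on $c^*,d^*,M_\Omega,\beta$, a subset of what the lemma allows, and -- as you note -- your bound is in fact sharper than the stated one because no $N^{d^*/2}$ factor is produced; the stated estimate then follows for $\kappa\le 1$ and $N\ge 1$, which hold throughout the KAM scheme. The one cosmetic remark: the factor $N^{d^*/2}$ in the statement presumably traces back to a different choice of threshold in \cite{Gre3}, so while your route likely mirrors that proof in spirit, you have tightened it.
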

\begin{proof}
See Lemma $4.3$ and its proof in \cite{Gre3}.
\end{proof}

\begin{lem}\label{20243212004}
		Assume $f\in \mathcal{T}_{\sigma,\mu,\mathcal{D}}^{s,\beta}$. Then we have $f^T\in \mathcal{T}_{\sigma,\mu,\mathcal{D}}^{s,\beta}$,
		\begin{eqnarray}
			[f^T]_{\sigma,\mu,\mathcal{D}}^{s,\beta}\leq C[f]_{\sigma,\mu,\mathcal{D}}^{s,\beta},
		\end{eqnarray}
and for any $0<\mu'<\mu$,
		\begin{eqnarray}
			[f-f^T]_{\sigma,\mu',\mathcal{D}}^{s,\beta}\leq C\Big(\frac{\mu'}{\mu}\Big)^3[f]_{\sigma,\mu,\mathcal{D}}^{s,\beta},
		\end{eqnarray}
where $C>0$ is an absolute constant.
\end{lem}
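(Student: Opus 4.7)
The plan is to prove both inequalities by a single one-variable scaling argument coupled with the standard Cauchy estimate for holomorphic functions of one complex variable. Throughout, the $\rho$-derivatives are handled in the same way as the base estimates, so I suppress them in the outline.

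For the first estimate, the Taylor coefficients defining $f^T$ are the evaluations $f_\theta = f(\theta,0,0;\rho)$, $f_r = \nabla_r f(\theta,0,0;\rho)$, $f_\zeta = \nabla_\zeta f(\theta,0,0;\rho)$ and $f_{\zeta\zeta} = \nabla_\zeta^2 f(\theta,0,0;\rho)$, which are already controlled by $[f]_{\sigma,\mu,\mathcal{D}}^{s,\beta}$ via \eqref{20243141517-1}--\eqref{2024314151711}. I then plug these coefficients into the explicit formula \eqref{2024531000} for $f^T$, apply $|r|<\mu^2$, $\|\zeta\|_s<\mu$, weighted Cauchy--Schwarz for $\langle f_\zeta,\zeta\rangle$, and the operator bound \eqref{2024314151711} for $\langle\zeta,f_{\zeta\zeta}\zeta\rangle$. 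Since $\nabla_\zeta f^T = f_\zeta+f_{\zeta\zeta}\zeta$ and $\nabla_\zeta^2 f^T = f_{\zeta\zeta}$, the corresponding gradient and Hessian factors in the norm are immediate, yielding $[f^T]_{\sigma,\mu,\mathcal{D}}^{s,\beta}\leq C[f]_{\sigma,\mu,\mathcal{D}}^{s,\beta}$ with an absolute $C$.

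For the remainder estimate, fix $(\theta,r,\zeta)\in\mathcal{O}^s(\sigma,\mu')$ and set $\hat r := r/(\mu')^2$, $\hat\zeta := \zeta/\mu'$, so $|\hat r|<1$ and $\|\hat\zeta\|_s<1$. Define the scalar function
\begin{eqnarray*}
h(\lambda) := f(\theta,\lambda^2\hat r,\lambda\hat\zeta;\rho).
\end{eqnarray*}
Because $|\lambda^2\hat r|<\mu^2$ and $\|\lambda\hat\zeta\|_s<\mu$ whenever $|\lambda|<\mu$, the function $h$ is holomorphic on $\{|\lambda|<\mu\}$ and bounded there by $[f]_{\sigma,\mu,\mathcal{D}}^{s,\beta}$. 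A direct identification shows that the Taylor coefficients of $h$ at $\lambda=0$ of orders $0,1,2$ are exactly $f_\theta$, $\langle f_\zeta,\hat\zeta\rangle$, and $\langle f_r,\hat r\rangle+\tfrac{1}{2}\langle\hat\zeta,f_{\zeta\zeta}\hat\zeta\rangle$ respectively, which matches the polynomial $f^T(\theta,\lambda^2\hat r,\lambda\hat\zeta;\rho)$. Hence
\begin{eqnarray*}
h(\lambda)-f^T(\theta,\lambda^2\hat r,\lambda\hat\zeta;\rho)=\sum_{k\geq 3}a_k\lambda^k,
\end{eqnarray*}
and the Cauchy bound $|a_k|\leq\mu^{-k}[f]_{\sigma,\mu,\mathcal{D}}^{s,\beta}$ at $\lambda=\mu'$ produces the pointwise estimate $|f-f^T|\leq C(\mu'/\mu)^3[f]_{\sigma,\mu,\mathcal{D}}^{s,\beta}$ in the relevant iterative regime $\mu'\leq\mu/2$ (which absorbs the geometric factor $(1-\mu'/\mu)^{-1}$ into an absolute constant).

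The two remaining quantities in $[f-f^T]_{\sigma,\mu',\mathcal{D}}^{s,\beta}$ are handled by the same scaling applied to the $Y_s$-valued function $H(\lambda):=\nabla_\zeta f(\theta,\lambda^2\hat r,\lambda\hat\zeta;\rho)$ and the $\mathcal{M}_{s,\beta}$-valued function $M(\lambda):=\nabla_\zeta^2 f(\theta,\lambda^2\hat r,\lambda\hat\zeta;\rho)$. Matching Taylor coefficients shows that $H-H^T$ starts at order $\lambda^2$ while $M-M^T$ starts at order $\lambda$; Cauchy estimates in $Y_s$ and in $\mathcal{M}_{s,\beta}$, combined with the weights $\mu'$ and $(\mu')^2$ built into the norm \eqref{20240605-1}, restore the uniform factor $(\mu'/\mu)^3$ in both cases. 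The only nontrivial point---and essentially the sole ``obstacle''---is keeping track of the asymmetric scaling $r\sim\lambda^2$, $\zeta\sim\lambda$, which is dictated by the radii $\mu^2$ and $\mu$ in the definition of $\mathcal{O}^s(\sigma,\mu)$. It is exactly this asymmetry that forces the truncation $f^T$ to carry the linear $r$-term at the same formal order as the quadratic $\zeta$-term, and it is what promotes the naive gain of $(\mu'/\mu)^2$ to the sharp gain of $(\mu'/\mu)^3$.
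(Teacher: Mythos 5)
Your argument is essentially the standard scaling-plus-Cauchy proof of this truncation lemma, which is precisely what the paper is appealing to by citing Proposition~4.2 of Gr\'{e}bert--Paturel: hold $\theta,\rho$ fixed, scale $r\sim\lambda^2$, $\zeta\sim\lambda$ in one complex variable $\lambda$ on the disk $|\lambda|<\mu$, observe that $f^T(\theta,\lambda^2\hat r,\lambda\hat\zeta)$ is the degree-$2$ Taylor polynomial in $\lambda$ of $h(\lambda)=f(\theta,\lambda^2\hat r,\lambda\hat\zeta)$, and estimate the tail. Your identification of the Taylor coefficients at orders $0,1,2$ is correct, as is your observation that the scalar remainder, $\nabla_\zeta$-remainder and $\nabla_\zeta^2$-remainder vanish to orders $3,2,1$ respectively; combined with the weights $1,\mu',(\mu')^2$ in the definition of $[\,\cdot\,]^{s,\beta}_{\sigma,\mu',\mathcal D}$ this yields the common factor $(\mu'/\mu)^3$ in all three pieces, and the $\rho$-derivatives propagate through identically. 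The first inequality (for $f^T$) is also handled correctly: the coefficient bounds \eqref{20243141517-1}--\eqref{2024314151711} give the pointwise estimate directly, and $\nabla_\zeta f^T=f_\zeta+f_{\zeta\zeta}\zeta\in Y_s$, $\nabla_\zeta^2 f^T=f_{\zeta\zeta}\in\mathcal M_{s,\beta}$ confirm $f^T\in\mathcal T^{s,\beta}_{\sigma,\mu,\mathcal D}$.

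The one point worth sharpening is the constant. Summing the geometric series $\sum_{k\ge3}(\mu'/\mu)^k$ produces a factor $(1-\mu'/\mu)^{-1}$ and therefore does not give an absolute $C$ for all $0<\mu'<\mu$; you correctly flag this by retreating to $\mu'\le\mu/2$. Since the lemma as stated claims an absolute constant for all $\mu'<\mu$, replace the coefficient-by-coefficient Cauchy estimate with the Schwarz-lemma form: if $h$ is holomorphic and bounded by $M$ on $|\lambda|<\mu$ and vanishes to order $m$ at $0$, then $h(\lambda)/\lambda^m$ is holomorphic and, by the maximum principle applied on circles $|\lambda|=r\uparrow\mu$, bounded by $M/\mu^m$; hence $|h(\mu')|\le (\mu'/\mu)^m M$ with constant~$1$. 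Applying this with $m=3,2,1$ to the scalar, $Y_s$-valued and $\mathcal M_{s,\beta}$-valued remainders gives the stated inequality with $C=1$ for all $0<\mu'<\mu$, and the iterative use in the paper (which indeed takes $\mu'<\mu/2$) is then a special case rather than a necessary restriction.
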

\begin{proof}
See Proposition 4.2 and its proof in \cite{Gre2}.
\end{proof}
\begin{lem}\label{20247212055}
Assume $s>d-2\geq0$ and $V(x)\in\mathcal{H}^s$. Define the matrix $Q$ with its element
$$Q_{i,\iota_1}^{j,\iota_2}=\int_{\mathbb{R}^d}V(x)\Psi_{i,\iota_1}(x)\Psi_{j,\iota_2}(x)dx,~~~~~~(i,\iota_1),(j,\iota_2)\in\mathfrak{L}.$$
	Then there exists $\beta\in(0,\frac{1}{8})$ only depending on $d,s$ such that for any $i,j\geq 1$,
	\begin{eqnarray}\label{20247211752}
		\big\|Q_i^j\big\|\leq \frac{C}{(ij)^{\frac{\beta}{2}}w^{s}(i,j)}\|V\|_s,
	\end{eqnarray}
where $C$ is a positive constant only depending on $d,s$.
\end{lem}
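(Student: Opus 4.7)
The plan is to follow \cite[Lemma~3.2]{Gre3}, from which this statement is quoted verbatim. First I would bound the block operator norm by the Hilbert--Schmidt norm, so that
$$\|Q_i^j\|^2\leq\sum_{\iota_1,\iota_2}\bigl|\langle V,\Psi_{i,\iota_1}\Psi_{j,\iota_2}\rangle_{L^2}\bigr|^2,$$
and then invoke the duality between $\mathcal H^s$ and $\mathcal H^{-s}$ relative to the $L^2$-pairing to obtain
$$\|Q_i^j\|^2\leq\|V\|_s^2\sum_{\iota_1,\iota_2}\|\Psi_{i,\iota_1}\Psi_{j,\iota_2}\|_{\mathcal H^{-s}}^2.$$
The problem is thus reduced to the bilinear estimate
$$\sum_{\iota_1,\iota_2}\|\Psi_{i,\iota_1}\Psi_{j,\iota_2}\|_{\mathcal H^{-s}}^2\leq\frac{C}{(ij)^{\beta}w(i,j)^{2s}}$$
on products of Hermite functions, for some $\beta\in(0,\tfrac{1}{8})$ depending only on $d$ and $s$.

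To establish the bilinear estimate I would expand $\Psi_{i,\iota_1}\Psi_{j,\iota_2}$ in the Hermite eigenbasis of $T$. Denoting by $E_k$ the orthogonal projection onto $X_k$, Parseval gives
$$\|\Psi_{i,\iota_1}\Psi_{j,\iota_2}\|_{\mathcal H^{-s}}^2\sim\sum_{k\geq 1}k^{-2s}\bigl\|E_k(\Psi_{i,\iota_1}\Psi_{j,\iota_2})\bigr\|_{L^2}^2.$$
Two ingredients then come into play. The first is the concentration/decay structure of Hermite function products, which shows that $E_k(\Psi_{i,\iota_1}\Psi_{j,\iota_2})$ is effectively supported at scales $k\gtrsim\max(1,|i-j|)$ with rapidly decaying tails; evaluating the weight $k^{-2s}$ at the smallest admissible $k$ extracts the factor $w(i,j)^{-2s}$ from the very definition of $w(i,j)$. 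The second is the sharp $L^p$-bound for Hermite spectral projectors (Koch--Tataru, Karadzhov), which, after a H\"older argument on $\|E_k(\Psi_{i,\iota_1}\Psi_{j,\iota_2})\|_{L^2}^2$ summed over $(\iota_1,\iota_2)$, produces the additional small factor $(ij)^{-\beta}$.

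The hard part will be ensuring uniformity in the multiplicity indices: since $d_i,d_j\lesssim (ij)^{d-1}$, pointwise estimates in $(\iota_1,\iota_2)$ are useless and one must control the full Hilbert--Schmidt sum. The $L^p$-gain is what beats these multiplicities, and this is precisely why the hypothesis $s>d-2$ enters (it guarantees convergence of the summations $\sum_k k^{-2s}$ against factors of degree $d-1$ in the multiplicities) and why $\beta$ cannot be taken larger than $\tfrac18$ (the sharp Hermite $L^p$-exponents saturate there in high dimensions). The careful dyadic decomposition in $k$ and the interpolation between $L^p$-bounds are carried out in detail in \cite[Lemma~3.2]{Gre3}, to which I would refer the reader for the complete argument.
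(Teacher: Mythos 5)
The paper offers no proof of this lemma: it is quoted verbatim from \cite{Gre3} and the entire proof is the one line ``See Lemma~3.2 and its proof in \cite{Gre3}.'' You also close by deferring to that same reference, so as a matter of logical completeness your proposal matches the paper. The issue is that the explanatory sketch you give on the way there contains errors that would mislead anyone trying to reconstruct the argument.

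The central arithmetic does not check out. You claim the Hermite expansion of $\Psi_{i,\iota_1}\Psi_{j,\iota_2}$ is effectively supported at $k\gtrsim\max(1,|i-j|)$ and that ``evaluating $k^{-2s}$ at the smallest admissible $k$ extracts the factor $w(i,j)^{-2s}$ from the very definition of $w(i,j)$.'' But by \eqref{20240509-1}, $w(i,j)=1+|i-j|/\sqrt{i\wedge j}$, which is not $\max(1,|i-j|)$; when $|i-j|\gg\sqrt{i\wedge j}$ one has $w(i,j)^{-2s}\sim(i\wedge j)^{s}|i-j|^{-2s}$, and inserting $k=|i-j|$ into $k^{-2s}$ misses the factor $(i\wedge j)^{s}$ entirely. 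The support claim itself is also delicate: unlike spherical harmonics, the product of two Hermite eigenfunctions is \emph{not} supported on finitely many Hermite levels (the weight $e^{-|x|^2}$ in $\Psi_i\Psi_j$ differs from the $e^{-|x|^2/2}$ of the basis), and for $i\sim j$ large the spectral mass sits near $k\sim i$ rather than near $k\sim\max(1,|i-j|)=1$, so your heuristic does not account for where the gain really comes from in that regime. Your sharpness remark is also misdirected: the Remark following Theorem \ref{20240726-6} records $\beta=\tfrac{1}{2(d+3)}$ (for $d\geq3$, $s>d$ integer), which \emph{decreases} with $d$ and equals $1/8$ only at $d=1$; the constraint $\beta<1/8$ is not a high-dimensional saturation phenomenon. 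Finally, the Hilbert--Schmidt reduction you begin with can be lossy by up to $(d_id_j)^{1/2}\lesssim(ij)^{(d-1)/2}$, and absorbing this into the $(ij)^{-\beta/2}$ gain is precisely the bookkeeping you would need to spell out. Given these gaps, the safer course is to do exactly what the paper does and cite \cite[Lemma~3.2]{Gre3} without the heuristic preamble.
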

	\begin{proof}
See Lemma 3.2 and its proof in \cite{Gre3}.
	\end{proof}


\begin{thebibliography}{99}
\bibitem{Baldi4}
\newblock P. Baldi, M. Berti, E. Haus and R. Montalto,
\newblock Time quasi-periodic gravity water waves in finite depth,
\newblock  {\it Invent. Math.} \textbf {214} (2), 739-911 (2018)

\bibitem{Baldi2}
\newblock  P. Baldi, M. Berti and R. Montalto,
\newblock KAM for quasi-linear and fully nonlinear forced perturbations of Airy equation,
\newblock  {\it Math. Ann.} {\bf 359}, 471-536 (2014)

\bibitem{Berti2}
\newblock M. Berti, L. Biasco and M. Procesi,
\newblock  KAM theory for the Hamiltonian derivative wave equation,
\newblock  {\it Ann. Sci. Ec. Norm. Super.} {\bf 46} (2), 301-373 (2013)

\bibitem{Berti7}
\newblock M. Berti and P. Bolle,
\newblock Sobolev quasi-periodic solutions of multidimensional wave equations with a multiplicative potential,
\newblock  {\it Nonlinearity} {\bf 25}, 2579-2613 (2012)

\bibitem{Berti4}
\newblock M. Berti and P. Bolle,
\newblock Quasi-periodic solutions with Sobolev regularity of NLS on $\mathbb{T}^d$ with a multiplicative potential,
\newblock  {\it J. Eur. Math. Soc.} {\bf 15}, 229-286 (2013)

\bibitem{Berti-C-P}
M. Berti, L. Corsi and M. Procesi,
An abstract Nash-Moser theorem and quasi-periodic solutions for NLW and NLS on compact Lie groups and homogeneous manifolds,
{\it Comm. Math. Phys.} {\bf 334} (3), 1413-1454 (2015)

\bibitem{Berti-H-M}
\newblock M. Berti, Z. Hassainia and N. Masmoudi,
\newblock Time quasi-periodic vortex patches of Euler equation in the plane,
\newblock {\it Invent. Math.} {\bf 233}, 1279-1391 (2023)

\bibitem{Bourgain1}
\newblock J. Bourgain,
\newblock Quasi-periodic solutions of Hamiltonian perturbations for 2D linear Schr\"{o}dinger equation,
\newblock  {\it Ann. of Math.} {\bf 148}, 363-439 (1998)

 \bibitem{Bourgain2}
\newblock J. Bourgain,
\newblock Green's function estimates for lattice Schr\"{o}dinger operators and applications,
\newblock  {\it Ann. of Math. Stud.} {\bf 158}, Princeton University Press, Princeton, NJ, 2005.

\bibitem{C-You}
\newblock L. Chierchia and J. You,
\newblock KAM tori for 1D nonlinear wave equations with periodic boundary conditions,
\newblock {\it Comm. Math. Phys.} {\bf 211} (2), 497-525 (2000)

\bibitem{Criag-Wayne}
\newblock W. Craig and  C. E. Wayne,
\newblock Newton's method and periodic solutions of nonlinear wave equation,
\newblock  {\it Comm. Pure Appl. Math.} {\bf 46}, 1409-1501 (1993)

\bibitem{Eliasson1}
\newblock H. L. Eliasson, B. Gr\'{e}bert and S. B. Kuksin,
\newblock KAM for the nonlinear beam equation,
\newblock {\it Geom. Funct. Anal.} {\bf 26}, 1588-1715 (2016)

\bibitem{Eliasson}
H. L. Eliasson and S. B. Kuksin,
KAM for the nonlinear Schr\"{o}dinger equation,
{\it Ann. of Math. }{\bf 172}, 371-435 (2010)

\bibitem{GY1}
\newblock J. Geng, X. Xu and J. You,
\newblock An infinite dimensional KAM theorem and its application to the two dimensional cubic Schr\"{o}dinger equation,
\newblock  {\it Adv. Math.} {\bf 226}, 5361-5402 (2011)

\bibitem{GY}
\newblock  J. Geng and J. You,
\newblock A KAM theorem for Hamiltonian partial differential equations in higher dimensional spaces,
\newblock  {\it Comm. Math. Phys.} {\bf 262}, 343-372 (2006)

\bibitem{Gre1}
\newblock B. Gr\'{e}bert, R. Imekraz and E. Paturel,
\newblock Normal forms for semilinear quantum harmonic oscillators,
\newblock {\it Comm. Math. Phys.} {\bf 291}, 763-798 (2009)

\bibitem{Gre2}
\newblock B. Gr\'{e}bert and E. Paturel,
\newblock KAM for the Klein Gordon equation on $\mathbb{S}^d$,
\newblock {\it Boll. Unione Mat. Ital.} {\bf 9}, 237-288 (2016)

\bibitem{Gre3}
B. Gr\'{e}bert and E. Paturel,
On reducibility of quantum harmonic oscillator on $\mathbb{R}^d$ with quasiperiodic in time potential,
{\it Ann. Facu. Sci. Toul.} {\bf 5}, 977-1014 (2019)

\bibitem{Gre}
\newblock B. Gr\'{e}bert and L. Thomann,
\newblock KAM for the quantum harmonic oscillator,
\newblock {\it Comm. Math. Phys.} {\bf 307}, 383-427 (2011)

\bibitem{kappeler}
T. Kappeler and J. P\"{o}schel,
KdV \& KAM,
Springer-Verlag, Berlin, Heidelberg, 2003.

\bibitem{K4}
\newblock S. B. Kuksin,
\newblock Hamiltonian perturbations of infinite-dimensional linear systems with an imaginary spectrum,
\newblock {\it Funktsional. Anal. i Prilozhen.} {\bf 21} (3), 22-37 (1987); English translation in: {\it Funct. Anal. Appl.} {\bf 21}, 192-205 (1987)

\bibitem{K5}
\newblock S. B. Kuksin,
\newblock Nearly Integrable Infinite-Dimensional Hamiltonian Systems,
Lecture Notes in Math. 1556, Springer-Verlag, Berlin, 1993

\bibitem{Kuksin1}
\newblock S. B. Kuksin,
\newblock Analysis of Hamiltonian PDEs,
Oxford Univ. Press, Oxford, 2000.

\bibitem{Poschel3}
\newblock S. B. Kuksin and J. P\"{o}schel,
\newblock Invariant Cantor manifolds of quasi-periodic oscillations for a nonlinear Schr\"{o}dinger equation,
\newblock {\it Ann. of Math.}  {\bf{143}}, 149-179 (1996)

\bibitem{L-Y1}
\newblock J. Liu and X. Yuan,
\newblock A KAM theorem for Hamiltonian partial differential equations with unbounded perturbations,
\newblock {\it Comm. Math. Phys.} {\bf 307}, 629-673 (2011)

\bibitem{P}
\newblock J. P\"{o}schel,
\newblock A KAM-theorem for some nonlinear partial differential equations,
\newblock {\it Ann. Sc. Norm. Sup. Pisa.} {\bf 23}, 119-148 (1996)

\bibitem{P2}
\newblock J. P\"{o}schel,
\newblock Quasi-periodic solutions for nonlinear wave equations,
\newblock {\it Comment. Math. Helv.} {\bf 71}, 269-296 (1996)

\bibitem{procesi}
C. Procesi and M. Procesi,
A KAM algorithm for the resonant non-linear Schr\"{o}dinger equation,
{\it Adv. Math.} {\bf 272}, 399-470 (2015)

\bibitem{Wayne}
C. E. Wayne,
Periodic and quasi-periodic solutions of nonlinear wave equation via KAM theory, {\it Comm. Math. Phys.} {\bf 127}, 479-528 (1990)

\bibitem{Yuan2003}
\newblock X. Yuan,
\newblock Quasi-periodic solutions of nonlinear Schr\"{o}dinger equations of higher dimension,
\newblock {\it J. Differential Equations} {\bf 195}, 230-242 (2003)

\bibitem{Y3}
\newblock X. Yuan,
\newblock Quasi-periodic solutions of completely resonant nonlinear wave equations,
\newblock {\it J. Differential Equations} {\bf 230}, 213-274 (2006)

\bibitem{Yuan2021}
\newblock X. Yuan,
\newblock KAM theorem with normal frequencies of finite limit-points for some shallow water equations,
\newblock {\it Comm. Pure Appl. Math.} {\bf 74}, 1193-1281 (2021)

\end{thebibliography}
\end{document}